\theoremstyle{plain}
\newtheorem{theo}{Theorem}
\newtheorem{lemma}[theo]{Lemma}
\newtheorem{prop}[theo]{Proposition}
\theoremstyle{definition}
\newtheorem{definition}{Definition}
\newtheorem{rmk}{Remark}
\DeclareMathOperator{\id}{Id}
\title{Petals and Books:\\ The largest Laplacian spectral gap  from $1$}
\author[1,2]{Jürgen Jost}
\author[3,1]{Raffaella Mulas\footnote{Email address: raffaella.mulas@mis.mpg.de}}
\author[4,1]{Dong Zhang}
\affil[1]{Max Planck Institute for Mathematics in the Sciences, Leipzig, Germany}
\affil[2]{Santa Fe Institute for the Sciences of Complexity, Santa Fe, New Mexico, USA}
\affil[3]{Vrije Universiteit Amsterdam, Amsterdam, The Netherlands}
\affil[4]{LMAM and School of Mathematical Sciences, Peking University, Beijing, China}
\date{}
\begin{document}
\bibliographystyle{plain} 

\maketitle

\begin{abstract}
We prove that, for any connected  graph on $N\geq 3$ vertices, the spectral gap from the value $1$ with respect to the normalized Laplacian is at most $1/2$. Moreover, we show that equality is achieved if and only if the graph is either a petal graph (for $N$ odd) or a book graph (for $N$ even). This implies that $(\frac12,\frac32)$ is a maximal gap interval for the   normalized   Laplacian on connected graphs. This is  closely related to the  Alon-Boppana bound on regular graphs   and a recent result by Koll\'ar and Sarnak on cubic graphs. Our result also provides a sharp bound for the convergence  rate of some eigenvalues of  the Laplacian on neighborhood graphs.
    \vspace{0.2cm}

\noindent {\bf Keywords:} Spectral graph theory; Normalized Laplacian; Spectral gaps; Eigenvalue 1; maximal gap interval; neighborhood graph
\end{abstract}

\section{Introduction}
A spectral gap is the maximal difference between two eigenvalues for linear operators in some class. Here, we consider Laplace operators, more precisely, the normalized Laplacian of a finite graph. Such inequalities were first studied for the Laplace operator $\Delta=-\sum_{i=1}^n\frac{\partial^2 }{\partial (x^i)^2}$ on a connected smooth domain $\Omega \subset \mathbb{R}^n$ with Dirichlet conditions, that is,
\begin{eqnarray}
\label{1}
\Delta u &=&\lambda u \text{ in }\Omega\\
\label{2a}
u&=&0 \text{ on }\partial \Omega
\end{eqnarray}
where a (smooth) solution $u$ that does not vanish identically is called an eigenfunction, and the corresponding value $\lambda$ an eigenvalue. We choose the sign in the definition of $\Delta$ to make it a non-negative operator. Since $\Omega$ is connected, the smallest eigenvalue $\lambda_1$ is positive, and the famous Faber-Krahn inequality \cite{Faber23,Krahn25,Krahn26} says that among all domains with the same volume, the smallest possible value is realized by a ball of that volume. By a result of Ashbaugh and Benguria \cite{Ashbaugh92}, the ratio between the first two eigenvalues, $\frac{\lambda_2}{\lambda_1}$ is largest for the ball. In either case, equality is assumed precisely for the ball. When we replace the Dirichlet boundary condition \eqref{2a} by the Neumann boundary condition
\begin{equation}\label{2b}
 \frac{\partial u}{\partial n} =  0 \text{ on }\partial \Omega,
\end{equation}
then the smallest eigenvalue is $\lambda_1=0$, with the constants being eigenfunctions, and Weinberger  \cite{Weinberger56} proved that the second smallest eigenvalue $\lambda_2$ now is always less than or equal to that of the ball, again with equality only for the ball. While the proofs of such results can be difficult, there is a general pattern here, that the extremal cases occur only for a very particular class of domains, balls in this case. Similarly, for eigenvalue problems for the Laplace-Beltrami operator in Riemannian geometry, often the extremal case is realized by spheres (for the eigenvalue problem in Riemannian geometry, see for instance the references given in  \cite{Chavel84,Jost17}). 

There are also discrete versions of those Laplacians, and naturally, their spectra have also been investigated. For the algebraic graph Laplacian, a systematic analysis of spectral gaps is presented in \cite{Kollar20}, and these authors have identified many beautiful classes of graphs with a particular structure of their spectra and spectral gaps.
Here, we consider another discrete  Laplacian, the normalized Laplace operator of a connected, finite, simple graph $\Gamma=(V,E)$ on $N\geq 3$ vertices. For a vertex $v\in V$, we denote by $\deg v$ its degree, that is, the number of its neighbors, i.e., the other vertices $w\sim v$ connected to $v$ by an edge. Then the Laplacian for a function $f:V\to \mathbb{R}$ is
\begin{equation}\label{eq:defL}
    \Delta f(v)=f(v)-\frac{1}{\deg v} \sum_{w\sim v}f(w),
\end{equation}
that is, we subtract from the value of $f$ at $v$ the average of the values at its neighbors. This operator generates random walks and diffusion processes on graphs, and it 
was first systematically studied in \cite{Chung97}. Since its spectrum is that of an $(N\times N)$-matrix with 1s in the diagonal, the eigenvalues $\lambda_1 \le \lambda_2 \le \dots \le \lambda_N$ satisfy
\begin{equation}\label{3}
    \sum_{i=1}^N \lambda_i =N.
\end{equation}
The first eigenvalue now is $\lambda_1=0$, but since $\Gamma$ is connected, the second eigenvalue $\lambda_2$ is positive. There are several inequalities controlling 
$\lambda_2$ from below in terms of properties of the graph (see \cite{Chung97}), and the largest value among all graphs with $N$ vertices is realized by the complete graph $K_N$ where $\lambda_2=\frac{N}{N-1}$; for all other graphs $\lambda_2\le 1$ \cite[Lemma 1.7]{Chung97}. The largest eigenvalue $\lambda_N$ is always less than or equal to $2$, with equality if and only if $\Gamma$ is bipartite. And the gap $2-\lambda_N$ quantifies how different $\Gamma$ is from being bipartite \cite{Bauer13}. In fact, the smallest possible value $\lambda_N=\frac{N}{N-1}$ is again realized only for $K_N$. For all other graphs, $\lambda_N\ge \frac{N+1}{N-1}$ \cite{Das16}, and again, the extremal graphs, where equality is realized, can be characterized \cite{JMM}. 

Thus, the situation for the spectral gaps at the ends of the spectrum, that is, at 0 and near 2 has been clarified. But we may also ask about gaps in the middle of the spectrum. In fact, besides 0 and 2, also the eigenvalue 1 plays a special role. It arises, in particular, from vertex duplications \cite{Banerjee08}. The extreme case is given by complete bipartite graphs $K_{n,m}$ with $n+m=N$. They have the eigenvalue 1 with multiplicity $N-2$. In fact, if 1 occurs with this multiplicity, then by \eqref{3}, 2 also has to be an eigenvalue, and the graph is bipartite. But like the eigenvalue 2, the eigenvalue 1 need not be present in a graph. Therefore, we can ask about the maximal spectral gap at 1, that is, we can ask what the maximal value of 
\begin{equation*}
    \varepsilon:=\min_i |1-\lambda_i|
\end{equation*}
could be. In this paper, we show that for any graph with $N\ge 3 $ vertices,  $\varepsilon \le \frac{1}{2}$ (for the graph with two vertices, the eigenvalues are 0 and 2, therefore, in this particular case, the gap is $1$), and as the title already reveals, we can identify the class of graphs for which the maximal possible value $\varepsilon = \frac{1}{2}$ is realized. In fact, in those cases, except for the triangle $K_3$, which only has $\frac{3}{2}$, both values $\frac{1}{2}$ and $\frac{3}{2}$ are eigenvalues. 

Our results have fit into a larger picture. They have  connections with  expander graphs and random walks on graphs, including  Alon-Boppana's theorem on Ramanujan graphs, Koll\'ar-Sarnak's theorem on the  maximal gap interval for cubic graphs \cite{Kollar20}, as well as Bauer-Jost's Laplacian on neighborhood  graphs \cite{Bauer13}. We shall explain these relations in Section \ref{sec:main}.
\section{Main result}\label{sec:main}
Throughout the paper we fix a connected, finite, simple graph $\Gamma=(V,E)$ on $N\geq 3$ vertices. We let $d$ denote the smallest vertex degree, we let $C(V)$ denote the space of functions $f:V\rightarrow \mathbb{R}$ and, given a vertex $v$, we let $\mathcal{N}(v):=\{w\in V:w\sim v\}$ denote the neighborhood of $v$. We let
\begin{equation*}
    \lambda_1=0<\lambda_2\leq\ldots\leq\lambda_N
\end{equation*}
denote the eigenvalues of the Laplacian in \eqref{eq:defL}, and we let 
\begin{equation*}
    \varepsilon:=\min_i |1-\lambda_i|
\end{equation*}be the spectral gap from $1$. Clearly, $\varepsilon\geq 0$ and this inequality is sharp since, for instance, any graph with duplicate vertices (i.e., vertices that have the same neighbours \cite{medina2006}) has $1$ as an eigenvalue. As anticipated in the introduction, here we prove that $\varepsilon\leq \frac{1}{2}$ and equality is achieved if and only if $\Gamma$ belongs to one of the following two optimal classes.

\begin{definition}[Petal graph, $N\geq 3$ odd]
Given $m\geq 1$, the $m$--\emph{petal graph} is the graph on $N=2m+1$ vertices such that (Figure \ref{fig:petal}):
\begin{itemize}
    \item $V=\{x,v_1,\ldots,v_m,w_1,\ldots,w_m\}$;
    \item $E=\{(x,v_i)\}_{i=1}^m\cup \{(x,w_i)\}_{i=1}^m\cup\{(v_i,w_i)\}_{i=1}^m$.
\end{itemize}
\end{definition}

\begin{figure}[h]
    \centering
    \includegraphics[width=5cm]{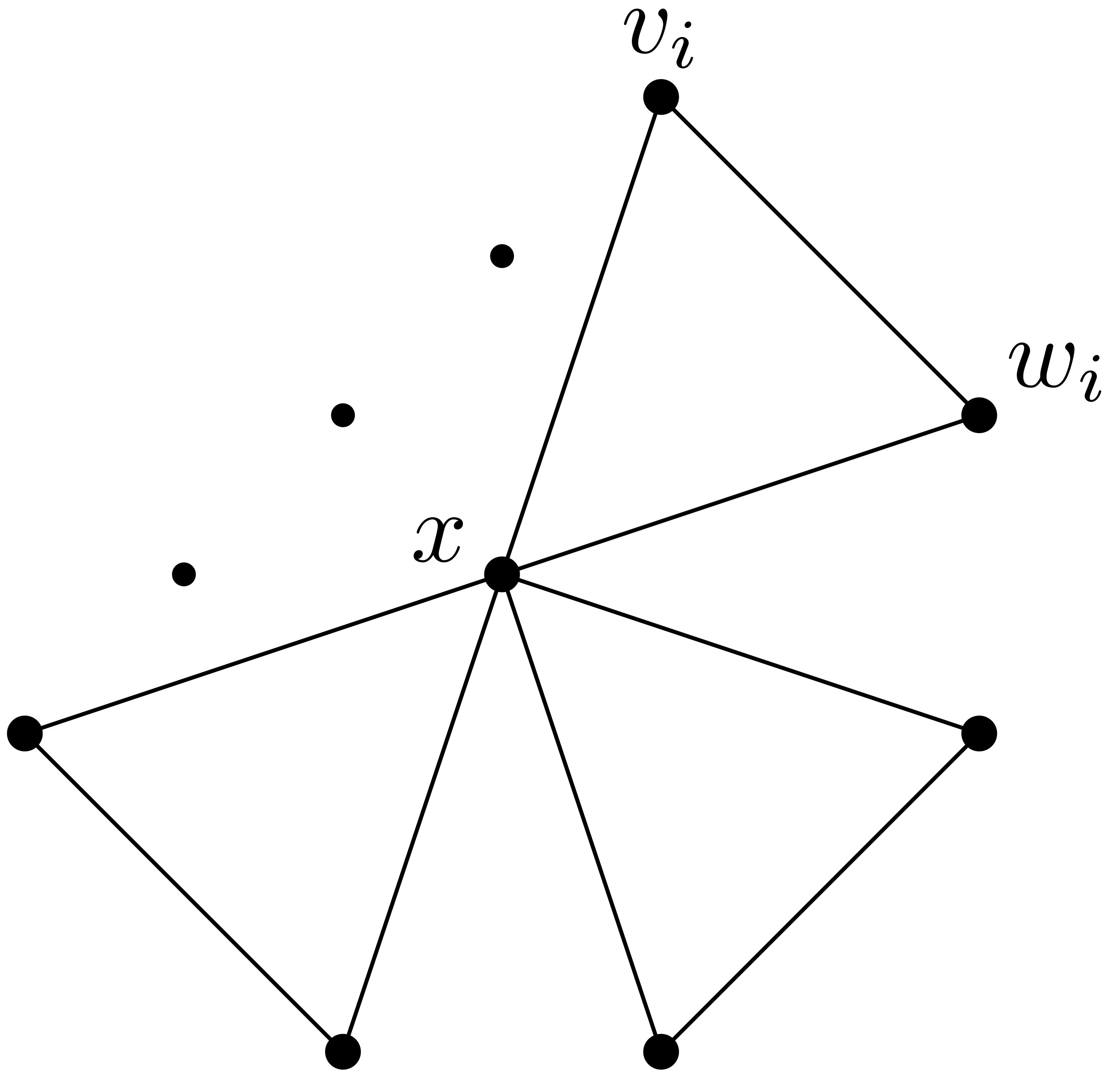}
    \caption{The petal graph}
    \label{fig:petal}
\end{figure}

 Petal graphs are also known as \emph{Dutch windmill graphs} or \emph{friendship graphs}. They appear in the famous Friendship Theorem from Erd\"os, R\'enyi and S\'os \cite{Erdos66}, which states that the only finite graphs with the property that every two vertices have exactly one neighbor in common are precisely the petal graphs. In fact, a proof of this result can proceed via spectral methods. The friendship assumption determines the square of the adjacency matrix, and hence the spectrum of that matrix, and one then proceeds by showing that this spectrum implies that the graph in question has to be petal graph. This is another example, and one relevant for the present paper of how the structure of a graph can be determined from its spectrum.\newline
As shown in \cite{Chung97}, for the petal graph on $N=2m +1$ vertices, the eigenvalues are $0$, $\frac{1}{2}$ (with multiplicity $m-1$) and $\frac{3}{2}$ (with multiplicity $m+1$). Therefore, $\varepsilon=\frac{1}{2}$ in this case.\\
 In fact, the eigenfunctions are easily constructed. Putting $f(w_i)=-f(v_i)$ and $f(x)=0$ produces $m$ linearly independent eigenfunctions for the eigenvalue $\frac{3}{2}$. Letting  $f(v_i)=f(w_i)=1 $ for all $i$ and $f(x)=-2$  produces another eigenfunction for that eigenvalue. With $f(v_i)=f(w_i)$ for all $i$, $\sum_i f(v_i)=0=f(x)$ we get  $m-1 $  linearly independent eigenfunctions for the eigenvalue $\frac{1}{2}$. Again, this is a good example of how the structure of a graph and properties of its spectrum are tightly related. Whenever we have two neighboring vertices $v,w$ that have all their other neighbors in common, the function with $f(v)=1, f(w)=-1, f(z)=0$ for all other vertices is an eigenfunction, and the eigenvalue depends on the number of those common neighbors. When, as here, there is only one common neighbor for such a pair, the eigenvalue is $3/2$. This eigenvalue will also occur for the book graph to be introduced in a moment, which for an even number of pages is a two-fold cover of the petal graph. But the book graph will be bipartite, and hence have the eigenvalue 2, which the petal graph, not being bipartite, cannot possess. 
 
\begin{definition}[Book graph, $N\geq 4$ even]
Given $m\geq 1$, the $m$--\emph{book graph} is the graph on $N=2m+2$ vertices such that (Figure \ref{fig:book}):
\begin{itemize}
    \item $V=\{x,y,v_1,\ldots,v_m,w_1,\ldots,w_m\}$;
    \item $E=\{(x,v_i)\}_{i=1}^m\cup \{(y,w_i)\}_{i=1}^m\cup\{(v_i,w_i)\}_{i=1}^m$.
\end{itemize}
\end{definition}

\begin{figure}[h]
    \centering
    \includegraphics[width=7cm]{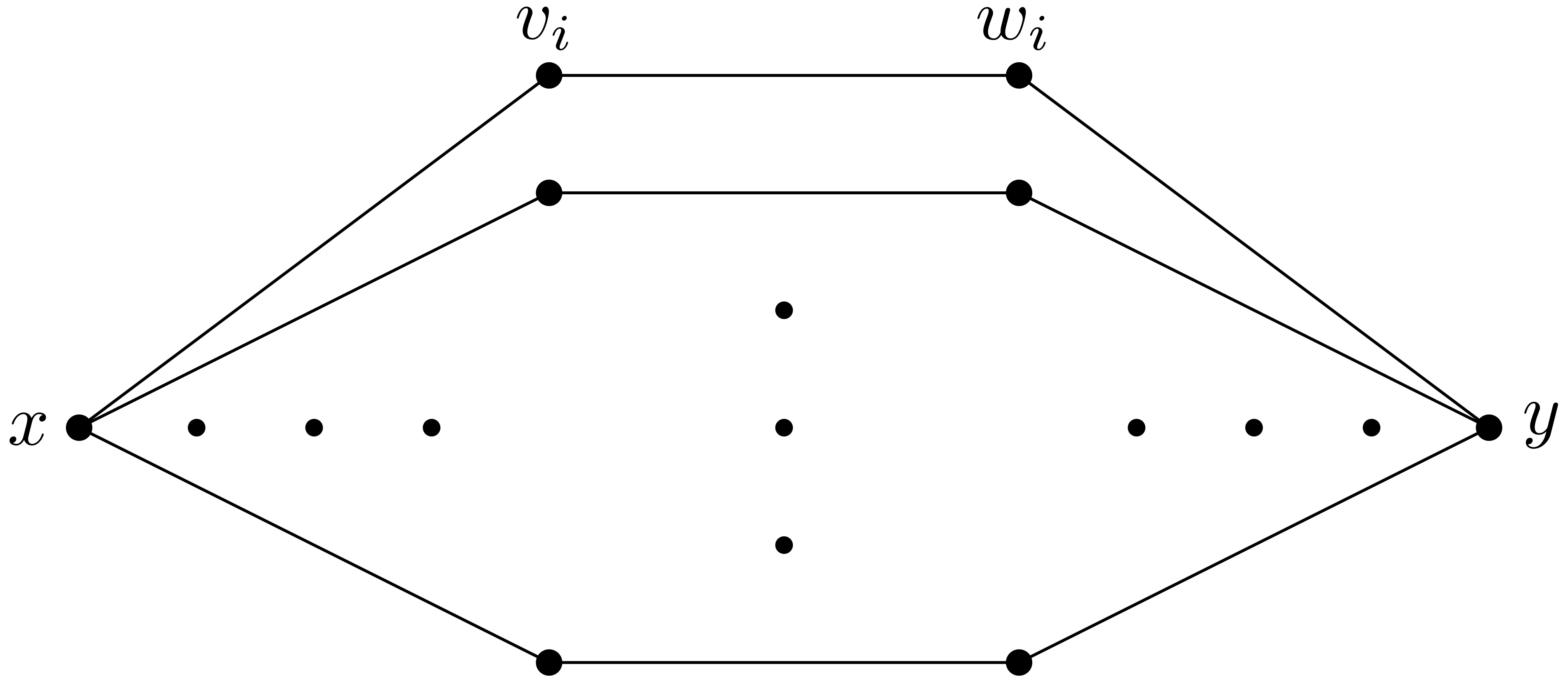}
    \caption{The book graph}
    \label{fig:book}
\end{figure}

\begin{rmk}
For the book graph on $N=2m+2$ vertices, $0$ and $2$ are eigenvalues with multiplicity $1$, since $\Gamma$ is connected and bipartite. Moreover, one can check that $\lambda=1\pm \frac{1}{2}$ are eigenvalues with multiplicity $m$ each. In fact, the corresponding eigenfunctions can be constructed as follows:
\begin{enumerate}
    \item By letting $\sum_i f(v_i)=0$, $f(x)=f(y)=0$, $f(w_i)=\mp f(v_i)$,  we obtain $m-1$ linearly independent eigenfunctions;
    \item By letting $f(v_i)=- f(w_i)=1$ and $f(x)=-f(y)=2$, we obtain one more eigenfunction for $\frac{1}{2}$ and similarly by letting $g(v_i)=g(w_i)=-1$ and $g(x)=g(y)=2$, we obtain one more eigenfunction for $\frac{3}{2}$. 
\end{enumerate}Hence, also in this case, $\varepsilon=\frac{1}{2}$.
\end{rmk}

Our main result is that these, and only these, examples have the largest possible spectral gap $\varepsilon =\frac{1}{2}$ at 1.

\begin{theo}\label{thm:main}
For any connected graph $\Gamma$ on $N\geq 3$ vertices,
\begin{equation*}
    \varepsilon\leq \frac{1}{2}.
\end{equation*} Moreover, equality is achieved if and only if $\Gamma$ is either a petal graph (for $N$ odd) or a book graph (for $N$ even).
\end{theo}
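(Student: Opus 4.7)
The proof breaks into two tasks: the uniform bound $\varepsilon\leq \tfrac12$ and the characterization of the equality case. Setting $P:=I-\Delta$ (the transition operator of the simple random walk on $\Gamma$) and working with the degree-weighted inner product $\langle f,g\rangle_{\deg}:=\sum_v \deg(v)\, f(v)g(v)$, one has $\varepsilon^2=\min_{f\not\equiv 0}\|Pf\|_{\deg}^2/\|f\|_{\deg}^2$. Hence $\varepsilon\leq \tfrac12$ is equivalent to exhibiting a test function $f$ with $\|Pf\|_{\deg}^2\leq \tfrac14 \|f\|_{\deg}^2$.

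The natural first family is the edge differences $f_e:=\delta_v-\delta_w$ for an edge $e=\{v,w\}\in E$. A direct computation gives
\[
\frac{\|Pf_e\|_{\deg}^2}{\|f_e\|_{\deg}^2}=\frac{\deg(v)^{-1}+\deg(w)^{-1}+\sum_{u\in \mathcal{N}(v)\triangle\mathcal{N}(w)\setminus\{v,w\}} \deg(u)^{-1}}{\deg(v)+\deg(w)},
\]
which equals exactly $\tfrac14$ on a \emph{petal/book edge} (both endpoints of degree $2$ sharing all their external neighbors) and strictly exceeds $\tfrac14$ otherwise. For graphs that lack such an edge I would enrich the family with two-support combinations $\alpha \delta_v+\beta \delta_w$ on non-adjacent $v,w$ with large neighborhood overlap and, if necessary, coarser test functions adapted to global substructures (for instance Fourier-type modes on long induced cycles). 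As a complementary tool, the trace identity $\mathrm{tr}(P^2)=2\sum_{\{v,w\}\in E}(\deg(v)\deg(w))^{-1}$, combined with the AM-GM upper bound $\mathrm{tr}(P^2)\leq \sum_v \deg(v)^{-1}$ and the lower bound $\mathrm{tr}(P^2)\geq (N+3)/4$ that would be forced by $\varepsilon>\tfrac12$, yields degree restrictions strong enough to eliminate the remaining configurations.

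For the equality case, assume $\varepsilon=\tfrac12$. Then the spectrum of $\Delta$ lies in $\{0\}\cup[0,\tfrac12]\cup[\tfrac32,2]$, and $(\Delta-\tfrac12 I)(\Delta-\tfrac32 I)=P^2-\tfrac14 I$ is positive semidefinite with nontrivial kernel, so $\Delta$ must admit an eigenvalue at $\tfrac12$ or $\tfrac32$. Any such eigenfunction $f$ satisfies
\[
\sum_{u\sim v} f(u)=\pm\tfrac12\, \deg(v)\, f(v) \qquad \text{for every }v\in V.
\]
Evaluating this pointwise relation at vertices in the support and in the zero set of $f$ forces the graph locally into degree-$2$ near-twin pairs glued through a common hub vertex. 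A parity argument based on $\sum_i\lambda_i=N$ and on whether the bipartite eigenvalue $\lambda_N=2$ is present then distinguishes the petal case ($N$ odd, non-bipartite) from the book case ($N$ even, bipartite) and rules out mixed configurations.

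The main obstacle I foresee is the universal bound step for graphs without a petal/book edge: there the single-edge test function overshoots $\tfrac14$, and one must either design a finer test function adapted to the global structure or close the gap through the trace inequalities together with an interlacing argument in the spirit of Koll\'ar and Sarnak. Propagating the local near-twin rigidity into the full petal or book structure in the equality case is similarly nontrivial and likely requires induction on $N$ together with a careful case analysis on the hub degrees.
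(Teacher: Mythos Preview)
Your proposal identifies the right variational quantity (the Rayleigh quotient for $P^2$, which is exactly the paper's Lemma~\ref{lemma:first}) and the right family of initial test functions, but what you have is a strategy outline rather than a proof, and the key step is genuinely missing.

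First, a concrete error: the edge-difference ratio does \emph{not} ``strictly exceed $\tfrac14$ otherwise''. For any edge in $K_4$, for instance, the ratio is $\tfrac19$. More relevantly, your ``petal/book edge'' (two adjacent degree-$2$ vertices sharing their external neighbour) does not exist in a book graph at all: the page edge $v_i w_i$ has distinct external neighbours $x,y$, and the ratio is $(1+2/m)/4>\tfrac14$. So already for book graphs, which realize $\varepsilon=\tfrac12$, no single edge-difference test function certifies the bound. This is precisely why the problem is hard.

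Second, and more importantly, the paragraph where you ``enrich the family'' and invoke the trace bound is where the entire difficulty lies, and you have not given an argument. The trace inequality $\mathrm{tr}(P^2)>(N+3)/4$ combined with $\mathrm{tr}(P^2)\le\sum_v 1/\deg v$ only tells you that some vertices have small degree; it does not force a contradiction by itself. The paper resolves this step through a long chain of structural lemmas (Lemmas~\ref{lemma:deg3}--\ref{lemma:step2}): assuming $\varepsilon>\tfrac12$, it uses indicator test functions $\delta_v$ and differences $\delta_u-\delta_v$ for pairs with a \emph{common neighbour} (not just adjacent pairs) to force degree constraints, then rules out every local degree pattern $(\deg u,\deg v,\deg w)\in\{1,2,3\}^3$ along paths $u\sim v\sim w$ through explicit case analysis with tailored three- and four-vertex test functions. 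This combinatorial cascade is the actual content of the upper bound, and nothing in your outline substitutes for it.

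For the equality case, your eigenfunction-equation approach is a reasonable idea and genuinely different from the paper, which instead splits on the minimum degree $d\in\{1,2,\ge3\}$ and for each case re-runs the variational inequality (now with $\ge$) to force the petal/book structure through another extensive case analysis. Your sketch, however, stops at ``forces the graph locally into degree-$2$ near-twin pairs'', which is exactly the hard part: showing that \emph{every} vertex outside the hub has degree $2$ and that the pairs glue consistently is where the paper spends most of Section~\ref{section:optimal}. The parity argument you mention at the end is not the obstruction; the local-to-global rigidity is.
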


As a consequence of Theorem \ref{thm:main}, we can infer that both petal  graphs and book graphs are uniquely characterized by their normalized Laplacian spectra. This relates to \cite{Cioaba15}, where it has been proved that, among connected graphs, the petal graphs are uniquely determined by the eigenvalues of the adjacency matrix.\newline

We prove Theorem \ref{thm:main} in Section \ref{section:proof}. For completeness, we also state the following result on the value $\max_{i\neq 1}|\lambda_i-1|.$

\begin{prop}\label{pro:max-lamda-i}For any connected graph $\Gamma$ on $N\geq 2$ vertices, 
$$\frac{1}{N-1}\le\max\limits_{i\ne 1}|\lambda_i-1|\le1 .$$ 
Moreover, the lower bound is an equality if and only if $\Gamma$ is the complete graph; the upper bound is an equality if and only if $\Gamma$ is bipartite.
\end{prop}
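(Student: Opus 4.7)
My plan is to derive both inequalities directly from two facts already available in the paper: the trace identity $\sum_{i=1}^N \lambda_i = N$ from \eqref{3}, and the known range $0 \le \lambda_i \le 2$ with the stated characterizations of its boundary cases. Both bounds turn out to be essentially one-line consequences once these tools are in place, so the real content of the statement lies in the equality cases.

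For the lower bound, I would argue as follows. Since $\lambda_1 = 0$, the trace identity gives $\sum_{i=2}^N \lambda_i = N$, so the average of $\lambda_2, \dots, \lambda_N$ equals $\tfrac{N}{N-1}$. In particular the largest of these satisfies $\lambda_N \ge \tfrac{N}{N-1}$, hence
\[
\max_{i\neq 1}|\lambda_i - 1| \;\ge\; \lambda_N - 1 \;\ge\; \frac{1}{N-1}.
\]
Equality forces $\lambda_i - 1 \le \tfrac{1}{N-1}$ and $1 - \lambda_i \le \tfrac{1}{N-1}$ for every $i \ge 2$, i.e.\ each $\lambda_i \in [\tfrac{N-2}{N-1}, \tfrac{N}{N-1}]$; combined with the trace constraint $\sum_{i=2}^N \lambda_i = N$ and the upper bound $\lambda_i \le \tfrac{N}{N-1}$, this pins down $\lambda_2 = \cdots = \lambda_N = \tfrac{N}{N-1}$. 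As noted in the introduction (the Chung result $\lambda_2 \le 1$ for non-complete graphs, or equivalently the characterization $\lambda_N = \tfrac{N}{N-1}$ iff $\Gamma = K_N$), this occurs exactly for the complete graph.

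For the upper bound, the inclusion of the spectrum in $[0,2]$ gives $|\lambda_i - 1| \le 1$ for every $i$. Equality in $\max_{i \neq 1}|\lambda_i - 1| = 1$ means some $\lambda_i$ with $i\ge 2$ equals $0$ or $2$. The first possibility is excluded by connectedness, which forces $\lambda_2 > 0$. The second is equivalent, by the standard characterization recalled in the introduction (Bauer--Jost), to $\Gamma$ being bipartite. This finishes both equality cases.

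I do not anticipate any real obstacle: the proposition is a short corollary of tools already assembled in the paper, and the only subtlety is being careful in the equality analysis of the lower bound, where one must use both endpoints of the admissible interval for the $\lambda_i$ together with the trace constraint before invoking the characterization of $K_N$.
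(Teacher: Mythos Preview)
Your proposal is correct and follows essentially the same approach as the paper: both derive the lower bound from the trace identity \eqref{3} via an averaging/pigeonhole argument and handle the upper bound and its equality case via the spectral inclusion $[0,2]$ and the bipartite characterization of $\lambda_N=2$. The only cosmetic difference is that the paper bounds the average of $|\lambda_i-1|$ directly (using $\sum_{i\ge 2}(\lambda_i-1)=1\le \sum_{i\ge 2}|\lambda_i-1|$), whereas you first deduce $\lambda_N\ge \tfrac{N}{N-1}$ and then pass to $|\lambda_N-1|$; the equality analyses coincide.
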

\begin{proof}
By \eqref{3}, using the fact that $\lambda_1=0$, it follows that $\sum_{i=2}^N(\lambda_i-1)=1$. Thus,  
\begin{equation*}
    \frac{1}{N-1}\leq \frac{1}{N-1}\cdot \left(\sum_{i=2}^N|\lambda_i-1|\right)\leq \max\limits_{i\ne 1}|\lambda_i-1|
\end{equation*}and equality holds if and only if $\lambda_2=\ldots=\lambda_N=\frac{N}{N-1}$, that is, if and only if $\Gamma$ is the complete graph.\newline
The other claim follows from the fact that
\begin{equation*}
    0<\lambda_2\leq \ldots \leq \lambda_N\leq 2
\end{equation*}and $\lambda_N=2$ if and only if $\Gamma$ is bipartite.
\end{proof}
We also prove the following lemma, which will be needed in the proof of Theorem \ref{thm:main} and which is an interesting result itself, since it allows us to characterize $\varepsilon$ for any graph.

\begin{lemma}\label{lemma:first}  
For any graph $\Gamma$, 
$$\varepsilon^2=\min_{f\in C(V)\setminus\{ \mathbf{0}\}}\frac{\sum\limits_{w\in V}\frac{1}{\deg w}\left(\sum\limits_{v\in \mathcal{N}(w)}f(v)\right)^2}{\sum\limits_{w\in V} \deg w\cdot f(w)^2}.
$$
\end{lemma}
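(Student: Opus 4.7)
}

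The idea is to re-express $\varepsilon^2$ as the smallest eigenvalue of a suitably symmetrized operator and then invoke the Rayleigh-quotient characterization. Set $T:=\id-\Delta$, so that $T$ acts on $f\in C(V)$ by
\[
Tf(v)=\frac{1}{\deg v}\sum_{w\in\mathcal{N}(v)} f(w),
\]
and its eigenvalues are exactly $1-\lambda_i$, $i=1,\ldots,N$. Consequently $\varepsilon^2=\min_i(1-\lambda_i)^2$ is the smallest eigenvalue of $T^2$, and the lemma will follow once we identify $\min_f\|Tf\|^2/\|f\|^2$, taken in an appropriate inner product, with this quantity.

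The key observation is that, although $T$ is not symmetric as a matrix, it is self-adjoint with respect to the weighted inner product
\[
\langle f,g\rangle_{D}:=\sum_{v\in V}\deg v\cdot f(v)g(v).
\]
Indeed, a direct computation gives $\langle Tf,g\rangle_D=\sum_{v\sim w}f(w)g(v)$, which is symmetric in $f$ and $g$ because the adjacency relation is symmetric. Hence $T$ admits an $\langle\cdot,\cdot\rangle_D$-orthonormal eigenbasis with real eigenvalues $1-\lambda_i$, and the spectral theorem yields
\[
\min_i(1-\lambda_i)^2=\min_{f\neq\mathbf 0}\frac{\langle T^2 f,f\rangle_D}{\langle f,f\rangle_D}=\min_{f\neq\mathbf 0}\frac{\langle Tf,Tf\rangle_D}{\langle f,f\rangle_D}.
\]

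It remains to rewrite both sides explicitly. The denominator is $\langle f,f\rangle_D=\sum_{w\in V}\deg w\cdot f(w)^2$. For the numerator,
\[
\langle Tf,Tf\rangle_D=\sum_{w\in V}\deg w\cdot\Bigl(\frac{1}{\deg w}\sum_{v\in\mathcal{N}(w)}f(v)\Bigr)^{\!2}=\sum_{w\in V}\frac{1}{\deg w}\Bigl(\sum_{v\in\mathcal{N}(w)}f(v)\Bigr)^{\!2},
\]
which matches the formula claimed in the lemma.

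There is no real obstacle here: the only thing one has to be careful about is not trying to symmetrize $T$ in the standard Euclidean inner product (where it is not self-adjoint, having asymmetric off-diagonal entries proportional to $1/\deg v$), but rather using the degree-weighted inner product $\langle\cdot,\cdot\rangle_D$, which is the natural one since it corresponds to the conjugation of $D^{-1}A$ into the symmetric matrix $D^{-1/2}AD^{-1/2}$ whose spectrum coincides with that of $\id-\Delta$. Once this is in place, the argument is simply the Rayleigh-quotient formula applied to $T^2$ and then unwound into the stated combinatorial expression.
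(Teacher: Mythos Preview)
Your proof is correct and follows essentially the same approach as the paper: both identify $\varepsilon^2$ as the smallest eigenvalue of $(\id-\Delta)^2$ after symmetrization, and then apply the Rayleigh-quotient characterization. The only difference is cosmetic---the paper conjugates explicitly by $D^{1/2}$ to obtain a symmetric matrix and then performs the substitution $f(v)\mapsto f(v)\sqrt{\deg v}$ at the end, whereas you achieve the same effect more directly by working in the degree-weighted inner product $\langle\cdot,\cdot\rangle_D$ from the start.
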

\begin{proof}

We observe that the values $(1-\lambda_1)^2,\ldots,(1-\lambda_N)^2$ are exactly the eigenvalues of the matrix $M:=(\id-D^{\frac12}\Delta D^{-\frac12})^2$ whose entries are 
$$M_{uv}=\sum_{w\in \mathcal{N}(u)\cap \mathcal{N}(v)}\frac{1}{\deg w\sqrt{\deg u\cdot \deg v}},\quad\text{ for }u,v\in V,$$
where $\id$ is the $N\times N$ identity matrix and $D=\mathrm{diag}(\deg 1,\cdots,\deg N)$ is the diagonal matrix consisting of degrees.  
In particular, $\varepsilon^2$ is the smallest eigenvalue of $M$. Therefore, by the Courant--Fischer--Weyl min-max principle, it can be written as
$$\varepsilon^2=\min\limits_{f\in C(V)\setminus\{ \mathbf{0}\}}\frac{\sum\limits_{u,v\in V}\sum\limits_{w\in \mathcal{N}(u)\cap \mathcal{N}(v)}\frac{1}{\deg w\sqrt{\deg u\cdot \deg v}}f(u)\cdot f(v)}{\sum\limits_{w\in V} f(w)^2}.$$
Now, observe that the numerator can be rewritten as
\begin{align*}
\sum\limits_{u,v\in V}\sum\limits_{w\in \mathcal{N}(u)\cap \mathcal{N}(v)}\frac{f(u)\cdot f(v)}{\deg w\sqrt{\deg u\cdot \deg v}}&=\sum\limits_{w\in V}\sum\limits_{u,v\in \mathcal{N}(w)}\frac{f(u)\cdot f(v)}{\deg w\sqrt{\deg u\cdot \deg v}}    
\\&=\sum\limits_{w\in V}\frac{1}{\deg w}\left(\sum\limits_{v\in \mathcal{N}(w)}\frac{f(v)}{\sqrt{\deg v}}  \right)^2.
\end{align*}
It follows that
\begin{align*}
\varepsilon^2&=\min\limits_{f\in C(V)\setminus\{ \mathbf{0}\}}\frac{\sum\limits_{w\in V}\frac{1}{\deg w}\left(\sum\limits_{v\in \mathcal{N}(w)}\frac{f(v)}{\sqrt{\deg v}}  \right)^2}{\sum\limits_{w\in V} f(w)^2}\\
&=\min_{f\in C(V)\setminus\{ \mathbf{0}\}}\frac{\sum\limits_{w\in V}\frac{1}{\deg w}\left(\sum\limits_{v\in \mathcal{N}(w)}f(v)\right)^2}{\sum\limits_{w\in V} \deg w\cdot f(w)^2}.
\end{align*}
\end{proof}
This lemma will play an important role for controlling $\varepsilon$, because we can derive inequalities on vertex degrees in case $\varepsilon\ge \frac{1}{2}$, by choosing suitable local functions $f$.
\begin{rmk}
A \emph{gap interval with respect to a family}  $\mathcal{G}$ of simple graphs    is an open interval such that there are infinitely many graphs in $\mathcal{G}$ whose Laplacian spectrum  does not intersect the interval. Clearly, Theorem \ref{thm:main} trivially implies that, for $\mathcal{G}=\{\text{connected 
graphs}\}$,  $(\frac12,\frac32)$ is a maximal gap interval. This can be compared to a result by Koll\'ar and Sarnak \cite{Kollar20}, which states that, for  $\mathcal{G}=\{\text{connected regular  graphs of degree }3\}$, $(\frac23,\frac43)$ is a maximal gap interval. (The original result, Theorem 3 in \cite{Kollar20}, is formulated in terms of the adjacency matrix, but since these graphs are regular, the statement can be equivalently reformulated in terms of $\Delta$.) We also refer to the recent work of the application of gap intervals to microwave coplanar waveguide resonators by  Koll\'ar et al.\ \cite{Kollar19}. This line of research has an origin in the  Alon-Boppana theorem    \cite{Nilli91,Chiu92},  which implies that   $(0,1-\frac{2\sqrt{2}}{3})$ is a maximal gap interval for the Laplacian on cubic graphs, and this gap is achieved by  Ramanujan graphs.
\end{rmk}
Combining Theorems \ref{thm:main} and \ref{thm:d3}, we have the following

\begin{theo}\label{thm:main2}
For any connected graph $\Gamma$ on $N\geq 3$ vertices with smallest degree $d\geq 2$,
   \begin{equation*}
        \varepsilon\leq\frac{\sqrt{d-1}}{d}.
    \end{equation*}
\end{theo}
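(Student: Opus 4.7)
The plan is a short case analysis on the smallest vertex degree $d$. For $d=2$, one has $\sqrt{d-1}/d = 1/2$, so the desired inequality collapses to $\varepsilon \le 1/2$, which is precisely the conclusion of Theorem \ref{thm:main} for any connected graph on $N\ge 3$ vertices. For $d \ge 3$, the bound $\sqrt{d-1}/d$ is strictly smaller than $1/2$, and this is exactly the regime handled by Theorem \ref{thm:d3}. Combining these two cases covers every $d \ge 2$ and yields the stated inequality; the role of the present theorem is essentially to package the two results into a single uniform bound that matches continuously at $d = 2$.

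All non-trivial content sits in Theorem \ref{thm:d3}. To approach that, I would exploit the variational characterisation in Lemma \ref{lemma:first} and construct a test function $f$ localised in a ball around a vertex of minimum degree $d$, designed so that the Rayleigh quotient
$$\frac{\sum_{w\in V}\frac{1}{\deg w}\bigl(\sum_{v\in \mathcal{N}(w)}f(v)\bigr)^2}{\sum_{w\in V}\deg w\cdot f(w)^2}$$
is at most $(d-1)/d^2$. The threshold $\sqrt{d-1}/d$ is the classical Alon--Boppana value produced by the spectrum of the infinite $d$-regular tree, so the natural candidate for $f$ is a truncation of a tree-type eigenfunction to a ball of suitable radius, in the same spirit as the remark comparing to Koll\'ar--Sarnak in Section \ref{sec:main}.

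The main obstacle I expect is controlling the boundary of this ball: the ambient graph is only assumed to have minimum degree $d$, not to be $d$-regular, so higher-degree neighbours and short cycles can inflate the numerator, and the cutoff radius must be chosen carefully so those contributions remain under control while the denominator still grows fast enough to dominate. Once Theorem \ref{thm:d3} is established by such a local test-function construction, Theorem \ref{thm:main2} follows immediately from the one-line case split on $d$ described above.
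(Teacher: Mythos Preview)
Your reduction of Theorem~\ref{thm:main2} to the two cases $d=2$ (via Theorem~\ref{thm:main}) and $d\ge 3$ (via Theorem~\ref{thm:d3}) is exactly what the paper does; the paper states it in one line: ``Combining Theorems~\ref{thm:main} and~\ref{thm:d3}, we have the following.''

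Where you diverge is in your sketch of Theorem~\ref{thm:d3}. The paper does not build a tree-type test function; instead it argues by contradiction, assuming $\varepsilon>\sqrt{d-1}/d$ and plugging \emph{very local} test functions (supported on one or two vertices) into Lemma~\ref{lemma:first} to force degree constraints on $\mathcal{N}(u)\triangle\mathcal{N}(v)$ (Lemmas~\ref{lemma16}--\ref{lemma19}), ultimately producing two vertices of degree~$d$ with a common neighbour and too small a symmetric difference. This is combinatorial bookkeeping rather than spectral asymptotics.

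Your Alon--Boppana idea is natural but has a genuine obstacle you should be aware of: the threshold here is $\sqrt{d-1}/d$, not $2\sqrt{d-1}/d$. Tree eigenfunctions naturally put adjacency-type eigenvalues near the \emph{edge} $\pm 2\sqrt{d-1}$ of the tree spectrum, i.e.\ near $|1-\lambda|=2\sqrt{d-1}/d$, which is the wrong target by a factor of~$2$. To hit an eigenvalue near~$1$ (adjacency eigenvalue near~$0$) you would need a test function modelled on the \emph{interior} of the tree spectrum, and the standard truncation-and-cutoff arguments do not obviously deliver the exact constant $(d-1)/d^2$ for the Rayleigh quotient, especially once the graph is non-regular and high-degree vertices or short cycles appear inside the ball. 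It is not clear this route yields the sharp bound without further ideas, whereas the paper's local-contradiction method gives it cleanly.
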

This is rather interesting, and can be compared to   Alon–Boppana's work stating  that for any $d$-regular  
Ramanujan graph,  $$\max\limits_{i\ne 1}|\lambda_i-1| \leq 2\frac{\sqrt{d-1}}{d}.$$
See Proposition \ref{pro:max-lamda-i} for another comparison. 

Furthermore, our results provide a sharp bound for the convergence  rate of some eigenvalues of  the Laplacian on neighborhood graphs \cite{Bauer13}. 

The neighborhood graph   $\Gamma^{[\ell]}$ of order $\ell$ of  a graph  $\Gamma=(V,E)$ is a weighted graph whose edge weight $w_{ij}$  equals the probability that a random walker starting at $i$
reaches $j$ in $\ell$ steps. The neighborhood graphs $\{\Gamma^{[\ell]}\}_{\ell=1}^\infty$ encode
properties of random walks on $\Gamma$, asymptotic ones if $\ell\to \infty$. We
thereby gain a new source of geometric intuition for obtaining eigenvalue
estimates. The graph Laplacian $\Delta^{[\ell]}$  on
  $\Gamma^{[\ell]}$ satisfies
  $\Delta^{[\ell]}u= (I-(I-\Delta)^{l})u
  $
  for  $u\in\ell^2(\Gamma)$ (see \cite{Bauer13}). By Theorem  \ref{thm:main}, we have:
\begin{theo}\label{theonei1}
  For every connected graph  $\Gamma$ with at least three vertices, there is some eigenvalue
  $\lambda^{[\ell]}$ of $\Delta^{[\ell]}$ with
  \begin{equation*}
    \label{nei8}
  |1-  \lambda^{[\ell]}|\le \frac{1}{2^\ell}.
\end{equation*}
When $\ell$ is even, the largest eigenvalue of   $\Gamma^{[\ell]}$ satisfies
  \begin{equation*}
    \label{nei9}
    1- \frac{1}{2^\ell}\le \lambda^{[\ell]}_N\le 1,
  \end{equation*}
  and both bounds are sharp.
\end{theo}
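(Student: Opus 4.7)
The plan is to exploit the explicit spectral correspondence $\Delta^{[\ell]} = I - (I-\Delta)^\ell$ recalled just before the theorem. Since $\Delta$ and $\Delta^{[\ell]}$ are simultaneously diagonalizable, the eigenvalues of $\Delta^{[\ell]}$ are precisely the numbers
\begin{equation*}
\lambda^{[\ell]}_i = 1 - (1-\lambda_i)^\ell, \qquad i=1,\ldots,N,
\end{equation*}
where $\lambda_1 \le \cdots \le \lambda_N$ are the eigenvalues of $\Delta$. In particular,
\begin{equation*}
|1-\lambda^{[\ell]}_i| = |1-\lambda_i|^\ell.
\end{equation*}
This formula is the main engine: all three assertions will be read off from it by plugging in what we already know about $\min_i|1-\lambda_i|=\varepsilon$.

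For the first bound, I would pick the index $i$ that realizes $\varepsilon=\min_j|1-\lambda_j|$. Theorem \ref{thm:main} gives $\varepsilon \le \frac{1}{2}$, hence $|1-\lambda^{[\ell]}_i| = \varepsilon^\ell \le 2^{-\ell}$, which is the desired eigenvalue of $\Delta^{[\ell]}$.

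For the two-sided bound on $\lambda^{[\ell]}_N$ when $\ell$ is even, the key observation is that $(1-\lambda_j)^\ell \ge 0$ for every $j$, so $\lambda^{[\ell]}_j \le 1$ for every $j$, giving the upper bound $\lambda^{[\ell]}_N \le 1$. The lower bound comes from maximizing: for even $\ell$,
\begin{equation*}
\lambda^{[\ell]}_N = \max_j\bigl(1-(1-\lambda_j)^\ell\bigr) = 1 - \min_j |1-\lambda_j|^\ell = 1-\varepsilon^\ell \ge 1 - 2^{-\ell},
\end{equation*}
again using Theorem \ref{thm:main}.

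Finally, sharpness is immediate from the examples already displayed. The upper bound $\lambda^{[\ell]}_N = 1$ is attained by any graph having $1$ in its spectrum (e.g.\ any graph with duplicate vertices, such as complete bipartite graphs), where the corresponding eigenvalue of $\Delta^{[\ell]}$ is exactly $1$. The lower bound $\lambda^{[\ell]}_N = 1-2^{-\ell}$ is attained precisely on the extremal class of Theorem \ref{thm:main}: on a petal graph (for $N$ odd) or a book graph (for $N$ even) one has $\varepsilon=\frac12$, so the formula above yields equality. No step here is a real obstacle; the only thing that needs to be said carefully is why the parity assumption on $\ell$ is necessary, namely that without it $(1-\lambda_j)^\ell$ can be negative and the convenient identity $\min_j(1-\lambda_j)^\ell = \varepsilon^\ell$ fails.
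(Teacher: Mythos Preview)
Your proposal is correct and is exactly the argument the paper has in mind: the paper states Theorem~\ref{theonei1} as an immediate consequence of Theorem~\ref{thm:main} via the identity $\Delta^{[\ell]}=I-(I-\Delta)^\ell$, without spelling out the details, and you have supplied precisely those details. There is nothing to add.
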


\section{Proof of the main result}\label{section:proof}

This section is dedicated to the proof of Theorem \ref{thm:main}, that we split into two main parts. In particular, in Section \ref{section:upper} we prove that $\varepsilon\leq\frac{1}{2}$ for any connected graph $\Gamma$ on $N\geq 3$ vertices, while in Section \ref{section:optimal} we prove that $\varepsilon=\frac{1}{2}$ if and only if $\Gamma$ is either a petal graph or a book graph.

\subsection{Upper bound}\label{section:upper}
In this section we prove the first claim of Theorem \ref{thm:main}, namely 
\begin{theo}\label{thm:part1}
For any connected graph $\Gamma$ on $N\geq 3$ vertices,
\begin{equation*}
    \varepsilon\leq \frac{1}{2}.
\end{equation*} 
\end{theo}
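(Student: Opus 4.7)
The plan is to use Lemma \ref{lemma:first} and, for every connected graph $\Gamma$ on $N\ge 3$ vertices, exhibit a nonzero test function $f\in C(V)$ for which the Rayleigh-type quotient
\[
\mathcal{R}(f):=\frac{\sum_{w\in V}\tfrac{1}{\deg w}\bigl(\sum_{v\in\mathcal{N}(w)}f(v)\bigr)^{2}}{\sum_{w\in V}\deg w\cdot f(w)^{2}}
\]
is at most $1/4$. The natural candidates are very local. For $f=\mathbf{1}_u$ (the indicator of a single vertex $u$) the quotient simplifies to $\mathcal{R}(\mathbf{1}_u)=\tfrac{1}{\deg u}\sum_{w\sim u}1/\deg w$, while for $f=\mathbf{1}_u-\mathbf{1}_v$ a short calculation gives
\[
\mathcal{R}(\mathbf{1}_u-\mathbf{1}_v)=\frac{\sum_{w\in \mathcal{N}(u)\triangle\mathcal{N}(v)}1/\deg w}{\deg u+\deg v}.
\]
Checking these on the extremal graphs, taking $u=v_i,\ v=v_j$ (two outer vertices of degree $2$ sharing exactly one common neighbor) gives $\mathcal{R}=1/4$ exactly in both the petal and the book case, which is strong evidence that such two-vertex test functions should suffice in most situations.

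I would then proceed by contradiction and assume $\varepsilon>1/2$, so that $\mathcal{R}(f)>1/4$ for every nonzero $f$. Feeding $f=\mathbf{1}_u$ into Lemma \ref{lemma:first} at each vertex yields the pointwise inequality $\sum_{w\sim u}1/\deg w>\deg u/4$ for every $u\in V$; summing over $u$ and interchanging the sums gives $|E|<2N$, i.e.\ the average degree is strictly less than $4$, and the same pointwise inequality also forces every vertex to have a neighbor of degree at most $3$. If $\Gamma$ has two twin vertices $u,v$ with $\mathcal{N}(u)=\mathcal{N}(v)$, then already $\mathcal{R}(\mathbf{1}_u-\mathbf{1}_v)=0$ and we are done; so we may assume $\Gamma$ is twin-free. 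Plugging $f=\mathbf{1}_u-\mathbf{1}_v$ for carefully chosen pairs (non-adjacent pairs with a single common neighbor, adjacent pairs with few or no common neighbors) then produces further strong constraints on $|\mathcal{N}(u)\triangle\mathcal{N}(v)|$ and on the degrees of the vertices therein.

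The remaining combinatorial case analysis consists of combining these constraints to produce, for every such $\Gamma$, an explicit pair $(u,v)$ with $\mathcal{R}(\mathbf{1}_u-\mathbf{1}_v)\le 1/4$, contradicting the standing assumption. I expect this case analysis to be the main obstacle: since the inequality $\varepsilon\le 1/2$ is tight, the extremal regime sits very close to the boundary, so one must carefully exclude all near-extremal non-petal, non-book graphs. Should two-vertex test functions fail in some residual low-degree cases, I would resort to three-vertex functions of the form $f=\alpha\mathbf{1}_{w_0}+\mathbf{1}_u+\mathbf{1}_v$, where $w_0$ is a vertex with two neighbors $u,v$ and $\alpha$ is optimized to exploit the overlap between $\mathcal{N}(u)$, $\mathcal{N}(v)$ and $\mathcal{N}(w_0)$, which I expect to push $\mathcal{R}$ below $1/4$ and close the argument.
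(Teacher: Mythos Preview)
Your overall strategy coincides with the paper's: assume $\varepsilon>\tfrac12$, use Lemma~\ref{lemma:first} to turn this into the strict inequality \eqref{eq:important}, and derive structural constraints by plugging in the local test functions $\mathbf{1}_u$ and $\mathbf{1}_u-\mathbf{1}_v$. Your computations of $\mathcal{R}(\mathbf{1}_u)$ and $\mathcal{R}(\mathbf{1}_u-\mathbf{1}_v)$ and the consequences you draw (every vertex has a neighbour of degree $\le 3$; the symmetric-difference inequality for pairs with a common neighbour) are exactly Lemmas~\ref{lemma:deg3} and~\ref{lemma:key-tech} of the paper.

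The genuine gap is that what you call ``the remaining combinatorial case analysis'' is the entire content of the proof, and it is not as short or as two-vertex as you anticipate. Two-vertex test functions alone do \emph{not} finish the job: the paper repeatedly uses three-vertex functions of the shape $f(v)=1$, $f(u_1)=f(w_1)=-1$ (with $u_1,w_1$ pendant vertices found via the symmetric-difference lemma) to rule out local configurations $u\sim v\sim w$ with $\deg u,\deg v,\deg w\in\{2,3\}$ (Lemma~\ref{lemma:elementary-1}), and even larger-support functions to show $|\mathcal{N}_{2,3}(v)|\le 1$ for every $v$ (Lemma~\ref{lemma:23}) and that a leaf's neighbour has degree $\le 2$ (Lemma~\ref{lemma:1}). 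Only after this chain of lemmas does the paper force the existence of $u\sim v\sim w$ with $\deg u=1$, $\deg v=2$, $\deg w\in\{2,3\}$, and then one more application of the symmetric-difference lemma collapses $\Gamma$ to $P_4$, contradicting $\varepsilon>\tfrac12$. So the three-vertex (and multi-vertex) tests are not a fallback for ``residual low-degree cases'' but are used throughout; your proposal would need to actually carry out this sequence of eliminations to become a proof.
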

Before proving it by contradiction, we show several properties that a connected graph on $N\geq 3$ vertices with $\varepsilon>\frac{1}{2}$ would have. We begin by observing that, by Lemma \ref{lemma:first}, such a graph should satisfy
\begin{equation}\label{eq:important}
\sum\limits_{w\in V}\frac{1}{\deg w}\left(\sum\limits_{v\in \mathcal{N}(w)}f(v)\right)^2>\frac14\sum\limits_{w\in V} \deg w\cdot f(w)^2, \quad \forall f\in C(V)\setminus \{\mathbf{0}\}.
\end{equation}

\begin{lemma}\label{lemma:deg3}Let $\Gamma$ be a connected graph on $N\geq 3$ vertices such that $\varepsilon>\frac{1}{2}$. Then, for any $v\in V$, there exists $w\in \mathcal{N}(v)$ such that $\deg w\le 3$. 
\end{lemma}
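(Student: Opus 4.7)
The plan is to apply Lemma \ref{lemma:first} to a well-chosen test function, specifically the indicator (Kronecker delta) at the vertex $v$. Since $\varepsilon > 1/2$ implies $\varepsilon^2 > 1/4$, the variational characterisation from Lemma \ref{lemma:first} forces the Rayleigh quotient to exceed $1/4$ for \emph{every} nonzero $f\in C(V)$, i.e.\ \eqref{eq:important} holds. So one simply tests \eqref{eq:important} against an $f$ supported at $v$ and reads off a degree inequality that constrains the neighborhood of $v$.

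Concretely, I would take $f = \mathbf{1}_{\{v\}}$, so $f(u)=1$ if $u=v$ and $0$ otherwise. For this choice, $\sum_{u\in\mathcal{N}(w)}f(u)$ equals $1$ if $w\sim v$ and $0$ otherwise, so the numerator of \eqref{eq:important} collapses to $\sum_{w\sim v}\frac{1}{\deg w}$, and the denominator is simply $\deg v$. Thus \eqref{eq:important} becomes
\begin{equation*}
\sum_{w\in\mathcal{N}(v)}\frac{1}{\deg w} > \frac{\deg v}{4}.
\end{equation*}

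Now I would conclude by contradiction: if every neighbor $w$ of $v$ had $\deg w\ge 4$, then $\frac{1}{\deg w}\le \frac{1}{4}$ for each $w\in\mathcal{N}(v)$, giving $\sum_{w\in\mathcal{N}(v)}\frac{1}{\deg w}\le \frac{\deg v}{4}$, contradicting the strict inequality just derived. Hence there must exist $w\in\mathcal{N}(v)$ with $\deg w\le 3$, as claimed. There is no real obstacle here: the work has already been done in Lemma \ref{lemma:first}, and the only ``creative'' step is picking the correct localized test function. This lemma is the prototype of how the remaining arguments will be organised: choose local test functions supported on small neighborhoods and translate the strict inequality \eqref{eq:important} into combinatorial restrictions on degrees and adjacencies.
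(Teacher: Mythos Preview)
Your proposal is correct and essentially identical to the paper's own proof: both test \eqref{eq:important} with the indicator function at $v$ to obtain $\sum_{w\in\mathcal{N}(v)}\frac{1}{\deg w}>\frac{1}{4}\deg v$, and then derive a contradiction from assuming all neighbors have degree at least $4$.
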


\begin{proof}
Let $f$ be such that $f(v):=1$ and $f(u):=0$ for all $u\ne v$. Then, \eqref{eq:important} implies $$\sum\limits_{w\in \mathcal{N}(v)}\frac{1}{\deg w}>\frac14 \deg v.$$
Now, if $\deg w\ge 4$ for all $w\in \mathcal{N}(v)$, then $$\sum\limits_{w\in \mathcal{N}(v)}\frac{1}{\deg w}\le \sum\limits_{w\in \mathcal{N}(v)}\frac14 =\frac14\deg v,$$
which is a contradiction.
\end{proof}

Given two vertices $u$ and $v$, we denote by $\mathcal{N}(u)\triangle \mathcal{N}(v)$ the symmetric difference of $\mathcal{N}(u)$ and $\mathcal{N}(v)$, i.e., the set of vertices that are neighbors of either $u$ or $v$.

\begin{lemma}\label{lemma:key-tech}Let $\Gamma$ be a connected graph on $N\geq 3$ vertices such that $\varepsilon>\frac{1}{2}$. If $\mathcal{N}(u)\cap \mathcal{N}(v)\ne\emptyset$, then $\mathcal{N}(u)\triangle  \mathcal{N}(v)\ne\emptyset$ and 
\begin{equation}\label{eq:repeat-key}
\sum_{w\in \mathcal{N}(u)\triangle \mathcal{N}(v)}\left(\frac{1}{\deg w}-\frac14\right)> \frac12.
\end{equation}
\end{lemma}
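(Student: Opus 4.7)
The plan is to apply Lemma \ref{lemma:first} to the single carefully chosen test function $f := \mathbf{1}_u - \mathbf{1}_v$ (i.e.\ $f(u)=1$, $f(v)=-1$, and $f\equiv 0$ elsewhere), and then read off the desired estimate by combining the strict Rayleigh-quotient bound \eqref{eq:important} with an elementary set-theoretic identity for $\deg u + \deg v$.

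First I will compute the two sides of \eqref{eq:important} for this $f$. The denominator is obviously
$$\sum_{w\in V}\deg(w)\,f(w)^2 \;=\; \deg u + \deg v.$$
For the numerator, the symmetry of adjacency gives $\sum_{z\in\mathcal{N}(w)} f(z) = \mathbf{1}[w\in\mathcal{N}(u)] - \mathbf{1}[w\in\mathcal{N}(v)]$, whose square equals $1$ exactly on $\mathcal{N}(u)\triangle\mathcal{N}(v)$ and vanishes elsewhere. Hence \eqref{eq:important} reduces to the strict inequality
$$\sum_{w\in\mathcal{N}(u)\triangle\mathcal{N}(v)}\frac{1}{\deg w} \;>\; \frac{1}{4}\bigl(\deg u+\deg v\bigr).$$
The non-emptiness statement $\mathcal{N}(u)\triangle\mathcal{N}(v)\neq\emptyset$ drops out immediately: if the symmetric difference were empty, the left-hand side would equal $0$, while the right-hand side is strictly positive since $\deg u,\deg v\ge 1$, a contradiction.

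To convert this into \eqref{eq:repeat-key} I will invoke the identity
$$\deg u + \deg v \;=\; |\mathcal{N}(u)\triangle\mathcal{N}(v)| \;+\; 2\,|\mathcal{N}(u)\cap\mathcal{N}(v)|,$$
which follows from $|\mathcal{N}(u)|+|\mathcal{N}(v)| = |\mathcal{N}(u)\cup\mathcal{N}(v)|+|\mathcal{N}(u)\cap\mathcal{N}(v)|$. Under the hypothesis $|\mathcal{N}(u)\cap\mathcal{N}(v)|\ge 1$, this yields $\tfrac14(\deg u+\deg v)\ge \tfrac14|\mathcal{N}(u)\triangle\mathcal{N}(v)|+\tfrac12$. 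Substituting into the previous display and rearranging the terms then produces precisely \eqref{eq:repeat-key}.

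I expect no genuine obstacle here; the argument is a single clean application of Lemma \ref{lemma:first} to a local test function. The only point worth emphasising is that the decisive additive $+\tfrac12$ on the right-hand side of \eqref{eq:repeat-key} originates exactly from the coefficient $2$ in the identity for $\deg u+\deg v$, which in turn is activated by the standing assumption $\mathcal{N}(u)\cap\mathcal{N}(v)\neq\emptyset$; this explains why the common-neighbour hypothesis must appear in the statement of the lemma.
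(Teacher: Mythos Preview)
Your proof is correct and essentially identical to the paper's: the same test function $f=\mathbf{1}_u-\mathbf{1}_v$ is plugged into \eqref{eq:important}, and the same identity $\deg u+\deg v=|\mathcal{N}(u)\triangle\mathcal{N}(v)|+2|\mathcal{N}(u)\cap\mathcal{N}(v)|$ is used to extract the additive $\tfrac12$. The only cosmetic difference is that the paper swaps the signs on $u$ and $v$, which is immaterial.
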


\begin{proof}
Let $f$ be such that $f(v):=1$, $f(u):=-1$ and $f(w):=0$ for all $w\in V\setminus\{u,v\}$. Then, by \eqref{eq:important},
 $$\sum_{w\in \mathcal{N}(u)\triangle \mathcal{N}(v)}\frac{1}{\deg w}>\frac14(\deg v+\deg u),$$
which also implies $\mathcal{N}(u)\triangle \mathcal{N}(v)\ne\emptyset$. Moreover, since
\begin{equation*}
    \deg v+\deg u=|\mathcal{N}(u)\triangle \mathcal{N}(v)|+2|\mathcal{N}(u)\cap \mathcal{N}(v)|,
\end{equation*}
the above inequality can be rewritten as
 $$\sum_{w\in \mathcal{N}(u)\triangle \mathcal{N}(v)}\left(\frac{1}{\deg w}-\frac14\right)> \frac14\cdot2|\mathcal{N}(u)\cap \mathcal{N}(v)|.$$

Since $\mathcal{N}(u)\cap \mathcal{N}(v)\ne\emptyset$, we have that $|\mathcal{N}(u)\cap \mathcal{N}(v)|\ge 1$, implying that
\begin{equation*}
\sum_{w\in \mathcal{N}(u)\triangle \mathcal{N}(v)}\left(\frac{1}{\deg w}-\frac14\right)> \frac12.
\end{equation*}
\end{proof}

\begin{lemma}\label{lemma:easycase}Let $\Gamma$ be a connected graph on $N\geq 3$ vertices such that $\varepsilon>\frac{1}{2}$. 
\begin{enumerate}
    \item If $\deg u=\deg w=1$, $u\neq w$, then there is no vertex $v$ with $u\sim v\sim w$.
    \item There are no distinct vertices  $u\sim v\sim w$ with
    \begin{equation*}
        1\le \deg u\le 2,\,\, 1\le \deg v\le 2 \text{ and }1\le \deg w\le 2.
    \end{equation*}
\end{enumerate} 
\end{lemma}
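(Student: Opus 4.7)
The plan is to argue by contradiction, working under the standing assumption $\varepsilon>\frac12$, and to use Lemma~\ref{lemma:key-tech} applied to carefully chosen pairs of vertices, falling back to the full Rayleigh quotient of Lemma~\ref{lemma:first} only in one critical subcase. For Claim~1, if $u\neq w$ are both leaves adjacent to a common vertex $v$, then $\mathcal{N}(u)=\mathcal{N}(w)=\{v\}$, so the intersection is non-empty but the symmetric difference is empty, directly contradicting Lemma~\ref{lemma:key-tech}.

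For Claim~2, suppose distinct $u,v,w$ satisfy $u\sim v\sim w$ with all three degrees in $\{1,2\}$. Since $v$ has two distinct neighbors, $\deg v=2$ and $\mathcal{N}(v)=\{u,w\}$. I would case-split on $(\deg u,\deg w)$. Case $(1,1)$ is Claim~1 applied to the pair $u,w$. For case $(2,2)$, let $u',w'$ denote the other neighbors of $u$ and $w$. If $u\sim w$, then $\{u,v,w\}$ spans a $K_3$ and by connectedness $\Gamma=K_3$, whose normalized Laplacian spectrum $\{0,\frac32,\frac32\}$ gives $\varepsilon=\frac12$, a contradiction. Otherwise $u'\neq w$ and $w'\neq u$; if $u'=w'$, then $\mathcal{N}(u)=\mathcal{N}(w)$ and Lemma~\ref{lemma:key-tech} yields an immediate contradiction. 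If $u'\neq w'$, Lemma~\ref{lemma:key-tech} applied to $(u,w)$ gives $\frac{1}{\deg u'}+\frac{1}{\deg w'}>1$, forcing (WLOG) $\deg u'=1$; a second application to the pair $(u',v)$, for which $\mathcal{N}(u')\cap\mathcal{N}(v)=\{u\}$ and $\mathcal{N}(u')\triangle\mathcal{N}(v)=\{w\}$, then forces $\deg w=1$, contradicting $\deg w=2$.

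The remaining case is $(\deg u,\deg w)=(1,2)$, and this is where I expect the main obstacle to lie. Let $w'$ be the other neighbor of $w$. Lemma~\ref{lemma:key-tech} applied to $(u,w)$ gives $\frac{1}{\deg w'}-\frac14>\frac12$, hence $\deg w'=1$. Now all four vertices $u,v,w,w'$ have their full degrees accounted for within $\{u,v,w,w'\}$, so connectedness forces $\Gamma$ to be the path $P_4$. Further applications of Lemma~\ref{lemma:key-tech} remain consistent with $P_4$, so one cannot close out using Lemma~\ref{lemma:key-tech} alone. My plan is to plug a hand-picked test function, e.g.\ $f(u)=2,\ f(v)=1,\ f(w)=-1,\ f(w')=-2$, into the Rayleigh quotient of Lemma~\ref{lemma:first}: a short computation yields numerator $3$ and denominator $12$, so $\varepsilon^2\le\frac14$, contradicting $\varepsilon>\frac12$. (Equivalently, one may invoke the known spectrum $\{0,\frac12,\frac32,2\}$ of $P_4$.)
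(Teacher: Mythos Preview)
Your proof is correct and follows the same overall strategy as the paper: use Lemma~\ref{lemma:key-tech} to force the graph into a small explicit configuration, then check that configuration has $\varepsilon\le\frac12$. The case split and the $P_4$ endgame match the paper almost exactly (the paper simply cites that $P_4$ is the $1$--book graph with $\varepsilon=\frac12$, whereas you exhibit a test function; both are fine).

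The one place you genuinely diverge is the $(2,2)$ case. The paper argues that both outer neighbors $u',w'$ must have degree~$1$, identifies $\Gamma$ as the path on five vertices, and then invokes $\varepsilon(P_5)<\frac12$. You instead stop after obtaining $\deg u'=1$ and apply Lemma~\ref{lemma:key-tech} a second time to the pair $(u',v)$, whose symmetric difference is the singleton $\{w\}$, forcing $\deg w=1$---an immediate contradiction. Your route is a bit cleaner: it stays entirely within Lemma~\ref{lemma:key-tech} (plus the trivial $K_3$ check) and avoids appealing to the spectrum of $P_5$, which the paper states without justification. The trade-off is that the paper's version is marginally shorter on the page once one is willing to cite small-graph spectra.
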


\begin{proof}
The first statement is a direct consequence of Lemma \ref{lemma:key-tech}.\newline 
We now prove the second statement. Assume the contrary, that is, there exist three vertices $u\sim v\sim w$ with $\{\deg u,\deg v,\deg w\}\subseteq\{1,2\}$. We only need to check the following two cases.
\begin{enumerate}[{Case} 1:] \item $\deg u=1$, $\deg v=2$ and $\deg w=2$.

Since $\mathcal{N}(u)\cap \mathcal{N}(w)\ne\emptyset$, by \eqref{eq:repeat-key}, the vertex $w_1\in \mathcal{N}(w)\setminus\{v\}$ has degree $1$. Hence, $\Gamma$ is a path of length $3$ (Figure \ref{fig:3path}). But this is a contradiction, since $\varepsilon=\frac{1}{2}$ for the path of length $3$, which is also the $1$-book graph.

\begin{figure}[h]
    \centering
    \begin{tikzpicture}[scale=1.5]
\draw (0,0)--(1,0)--(2,0)--(3,0);
\node (u) at  (0,0) {$\bullet$};
\node (v) at  (1,0) {$\bullet$};
\node (w) at  (2,0) {$\bullet$};
\node (w1) at  (3,0) {$\bullet$};
\node (u) at  (-0.1,0.2) {$u$};
\node (v) at  (0.8,0.2) {$v$};
\node (w) at  (2,0.2) {$w$};
\node (w1) at  (2.9,0.2) {$w_1$};
\end{tikzpicture}
    \caption{A graph arising in the proof of Lemma \ref{lemma:easycase}}
    \label{fig:3path}
\end{figure}
\item $\deg u=2$, $\deg v=2$ and $\deg w=2$.

Since $\mathcal{N}(u)\cap \mathcal{N}(w)\ne\emptyset$, by \eqref{eq:repeat-key}, the vertices $w_1\in \mathcal{N}(w)\setminus\{v\}$ and $u_1\in \mathcal{N}(u)\setminus\{v\}$ have degree $1$. Hence, $\Gamma$ is a path of length $4$. But this is also a contradiction, since $\varepsilon<\frac{1}{2}$ in this case.

\end{enumerate}
\end{proof}

\begin{lemma}\label{lemma:elementary-1}Let $\Gamma$ be a connected graph on $N\geq 3$ vertices such that $\varepsilon>\frac{1}{2}$. Then, there are no vertices  $u\sim v\sim w$ with $$2\le \deg u\le 3,\,\, 2\le \deg v\le 3 \text{ and } 2\le \deg w\le 3.$$
\end{lemma}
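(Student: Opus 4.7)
I argue by contradiction. Suppose there exist vertices $u\sim v\sim w$ with $\deg u, \deg v, \deg w\in\{2,3\}$. The central tool is Lemma \ref{lemma:key-tech}, applied repeatedly to pairs of vertices near the triple $(u,v,w)$.

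The first application is to the pair $(u,w)$: since $v\in\mathcal{N}(u)\cap\mathcal{N}(w)$, the lemma gives
\[
\sum_{z\in \mathcal{N}(u) \triangle \mathcal{N}(w)} \left(\frac{1}{\deg z}-\frac{1}{4}\right) > \frac{|\mathcal{N}(u)\cap\mathcal{N}(w)|}{2} \ge \frac{1}{2}.
\]
The symmetric difference has size $\deg u+\deg w-2|\mathcal{N}(u)\cap\mathcal{N}(w)|\le 4$, and the map $z\mapsto \frac{1}{\deg z}-\frac{1}{4}$ takes the values $\frac{3}{4}, \frac{1}{4}, \frac{1}{12}, 0, \ldots$ on $\deg z=1,2,3,4,\ldots$ respectively. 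If $u\sim w$, then $u$ and $w$ themselves lie in $\mathcal{N}(u) \triangle \mathcal{N}(w)$ but each contributes at most $\frac{1}{12}$; if $u\not\sim w$ they contribute nothing. Either way, the "extra" neighbors of $u$ and $w$ distinct from $v$ must carry the bulk of the required mass above $\frac{1}{2}$, which forces several of them to have degree $1$ or $2$.

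I would then split into cases according to $(\deg u,\deg v,\deg w)\in\{2,3\}^3$ up to the symmetry $u\leftrightarrow w$, and according to whether $u\sim w$. In each sub-case, I first identify the possible "extra" neighbors $u'\in\mathcal{N}(u)\setminus\{v,w\}$, $v'\in\mathcal{N}(v)\setminus\{u,w\}$, $w'\in\mathcal{N}(w)\setminus\{u,v\}$, and handle all possible coincidences among them (e.g.\ $u'=w'$, $u'=v'$, etc.). I then apply Lemma \ref{lemma:key-tech} to further pairs that share a common neighbor: the pairs $(u,v)$ and $(v,w)$ whenever $u\sim w$ (since then $w$ and $u$ are common neighbors of those pairs), and pairs of the form $(u',v)$, $(v',u)$, or $(w',v)$ otherwise. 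Each such application either produces a direct contradiction — in particular if it forces some $\mathcal{N}(a)\triangle\mathcal{N}(b)$ to be empty while $\mathcal{N}(a)\cap\mathcal{N}(b)$ is non-empty, which violates Lemma \ref{lemma:key-tech} — or pins down an additional neighbor of degree $1$ or $2$. I iterate until the local structure contains three consecutive vertices all of degree in $\{1,2\}$, which is forbidden by Lemma \ref{lemma:easycase}(2).

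The main obstacle is the extremal case $\deg u=\deg v=\deg w=3$, which is the least constrained: each of $u,v,w$ contributes only $\frac{1}{12}$ to any sum arising from Lemma \ref{lemma:key-tech}, so no single application already closes the argument. There one must combine the three applications to the pairs within $\{u,v,w\}$ with applications to pairs involving the extra neighbors $u',v',w'$, and carefully bookkeep which of these second neighbors may coincide, before the accumulated degree constraints produce the forbidden path of three low-degree vertices. In the remaining cases, where at least one of $\deg u,\deg v,\deg w$ equals $2$, the same strategy closes much more quickly, because a single degree-$2$ vertex in the relevant symmetric difference already contributes $\frac{1}{4}$ and so absorbs a large share of the $\frac{1}{2}$-mass that the lemma demands.
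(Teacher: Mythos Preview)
Your setup matches the paper's: contradiction, case split on $(\deg u,\deg v,\deg w)\in\{2,3\}^3$, and repeated use of Lemma~\ref{lemma:key-tech} on nearby pairs to force small degrees on the extra neighbors. The gap is in your closing mechanism. You plan to terminate every branch by either (i) hitting an empty $\mathcal{N}(a)\triangle\mathcal{N}(b)$ with $\mathcal{N}(a)\cap\mathcal{N}(b)\neq\emptyset$, or (ii) producing three consecutive vertices of degree in $\{1,2\}$ and invoking Lemma~\ref{lemma:easycase}(2). Neither outcome is guaranteed.

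Take the case $(\deg u,\deg v,\deg w)=(2,3,2)$ with $u\not\sim w$. Your iterations of Lemma~\ref{lemma:key-tech} determine the whole graph: it is the six-vertex tree of Figure~\ref{fig:lemma:elementary-1}, where $v$ has degree $3$, its neighbors $u,w,v_1$ have degrees $2,2,1$, and $u,w$ each carry a pendant leaf $u_1,w_1$. In this graph there is \emph{no} path $a\sim b\sim c$ with all three degrees $\le 2$ (every path of length two passes through a degree-$3$ vertex), so Lemma~\ref{lemma:easycase}(2) never fires. And one checks directly that every pair with a common neighbor satisfies the inequality of Lemma~\ref{lemma:key-tech}, so (i) never fires either. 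The same obstruction recurs in the $(2,3,3)$ and $(3,3,3)$ cases once the degree-$1$ pendants $u_1,v_1,w_1$ have been located: the middle vertex $v$ still has degree $3$, blocking any low-degree triple.

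The paper closes precisely these cases with an idea your toolkit lacks: it goes back to the raw inequality \eqref{eq:important} with a \emph{three}-support test function, $f(u_1)=f(w_1)=-1$, $f(v)=1$, $f\equiv 0$ elsewhere, which yields $1>\tfrac14(1+3+1)$. This is strictly stronger than anything obtainable from the two-support tests underlying Lemma~\ref{lemma:key-tech}. To complete your argument you must add such a bespoke test function (or an equivalent device) to the endgame.
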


\begin{proof}Assume the contrary, that is, there exist three vertices $u\sim v\sim w$ with $\{\deg u,\deg v,\deg w\}\subseteq\{2,3\}$. We need to check five cases.

\begin{enumerate}[{Case} 1:]

\item $\deg u=2$, $\deg v=3$ and $\deg w=2$.

By applying \eqref{eq:repeat-key} three times, we have that:
\begin{itemize}
    \item There exists $w_1\in \mathcal{N}(u)\triangle \mathcal{N}(w)$ with $\deg w_1=1$, and without loss of generality, we may assume that $w_1\sim w$;
    \item There exists $v_1\in \mathcal{N}(v)\triangle \mathcal{N}(w_1)=\mathcal{N}(v)\setminus\{w\}$ with $\deg v_1=1$;
    \item There exists $u_1\in \mathcal{N}(u)\triangle \mathcal{N}(v_1)=\mathcal{N}(u)\setminus\{v\}$ with $\deg u_1=1$.
\end{itemize}  Hence, $\Gamma$ is the graph in Figure \ref{fig:lemma:elementary-1}.

\begin{figure}[h]
    \centering
  \begin{tikzpicture}[scale=1.5]
\draw (-1,0)--(0,0)--(1,0)--(2,0)--(3,0);
\draw (1,0)--(1,1);
\node (u) at  (0,0) {$\bullet$};
\node (v) at  (1,0) {$\bullet$};
\node (w) at  (2,0) {$\bullet$};
\node (w1) at  (3,0) {$\bullet$};
\node (u1) at  (-1,0) {$\bullet$};
\node (v1) at  (1,1) {$\bullet$};
\node (u) at  (-0.1,0.2) {$u$};
\node (v) at  (0.8,0.2) {$v$};
\node (w) at  (2,0.2) {$w$};
\node (w1) at  (2.9,0.2) {$w_1$};
\node (u1) at  (-1,0.2) {$u_1$};
\node (v1) at  (1.2,0.9) {$v_1$};
\end{tikzpicture}
    \caption{A graph arising in the proof of Lemma \ref{lemma:elementary-1}}
    \label{fig:lemma:elementary-1}
\end{figure}
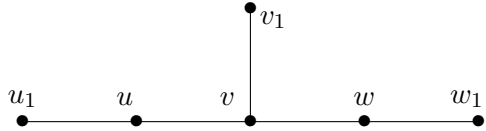

By letting $f(u_1):=f(w_1):=-1$,  $f(v):=1$ and $f(v_1):=f(u):=f(w):=0$, from \eqref{eq:important} we derive a contradiction.

\item $\deg u=2$, $\deg v=2$ and $\deg w=3$.

If there is only one vertex $x$ in  $\mathcal{N}(u)\triangle \mathcal{N}(w)$, then by \eqref{eq:repeat-key}, it must have degree $1$. Also, by construction, it is clear that $x\sim w$. Moreover, by \eqref{eq:repeat-key}, the only vertex $y\in \mathcal{N}(w)\setminus\{v,x\}$ has degree $1$. But this is a contradiction, since we must also have $y\sim u$.

Otherwise, there are three vertices in $\mathcal{N}(u)\triangle \mathcal{N}(w)$ and at least two of them have degree $2$. In this case, we reduce to Case 1.

\item $\deg u=2$, $\deg v=3$ and $\deg w=3$.

If $\deg x\ge2$ for all $x\in \mathcal{N}(u)\triangle \mathcal{N}(w)$, then by \eqref{eq:repeat-key},  there are at least two vertices $x$ and $y$ in $\mathcal{N}(u)\triangle \mathcal{N}(w)$ with degree $2$, and therefore we reduce to Case 1. 

There are two sub-cases left:
\begin{enumerate}
    \item[a)] If there exists $u_1\sim u$ with $\deg u_1=1$, then by \eqref{eq:repeat-key} there exists $v_1\sim v$ with $\deg v_1=1$ and there exists $w_1\sim w$ with $\deg w_1=1$. Therefore, $\Gamma$ has the same local structure as the graph in Figure \ref{fig:lemma:elementary-1b}.  By letting $f(u_1):=f(w_1):=-1$,  $f(v):=1$ and $f|_{V\setminus\{v,u_1,w_1\}}:=0$, from \eqref{eq:important} we get $1>\frac14(1+3+1)$, which is a contradiction.
    \begin{figure}[h]
    \centering
 \begin{tikzpicture}[scale=1.5]
\draw (-1,0)--(0,0)--(1,0)--(2,0)--(3,0);
\draw (1,0)--(1,1);
\node (u) at  (0,0) {$\bullet$};
\node (v) at  (1,0) {$\bullet$};
\node (w) at  (2,0) {$\bullet$};
\node (w1) at  (3,0) {$\bullet$};
\node (u1) at  (-1,0) {$\bullet$};
\node (v1) at  (1,1) {$\bullet$};
\node (u) at  (-0.1,0.2) {$u$};
\node (v) at  (0.8,0.2) {$v$};
\node (w) at  (1.8,0.2) {$w$};
\node (w1) at  (2.9,0.2) {$w_1$};
\node (u1) at  (-1,0.2) {$u_1$};
\node (v1) at  (1.2,0.9) {$v_1$};
\node (w2) at  (2.5,0.8) {$?$};
\draw[thick,dotted] (2.5,0.8)--(2,0);
\end{tikzpicture}
    \caption{A graph arising in the proof of Lemma \ref{lemma:elementary-1}}
    \label{fig:lemma:elementary-1b}
\end{figure}
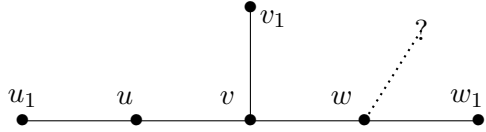
    \item[b)] If there exists $w_1\sim w$ with $\deg w_1=1$, then there exists $v_1\sim v$ with $\deg v_1=1$ and there exists $u_1\sim u$ with $\deg u_1=1$. Hence, as in the previous sub-case, $\Gamma$ has the same local structure as the graph in Figure \ref{fig:lemma:elementary-1b}, which brings to a contradiction.
\end{enumerate}

\item $\deg u=3$, $\deg v=2$ and $\deg w=3$.

If $\deg x\ge2$ for all $x\in \mathcal{N}(u)\triangle \mathcal{N}(w)$, then by \eqref{eq:repeat-key}  there are at least two vertices $x$ and $y$ in $\mathcal{N}(u)\triangle \mathcal{N}(w)$ of degree $2$, and thus we reduce to Case 1. 

Otherwise, there exists $u_1\sim u$ with $\deg u_1=1$. Applying Lemma \ref{lemma:key-tech} to the vertex pair $u_1$ and $v$, we derive a contradiction.

\item $\deg u=3$, $\deg v=3$ and $\deg w=3$.

Similar to the above cases, by repeatedly applying \eqref{eq:repeat-key} we can see that there exist $v_1\sim v$, $u_1\sim u$ and $w_1\sim w$ with $\deg v_1=\deg u_1=\deg w_1=1$. Therefore, $\Gamma$ has the same local structure as the graph in Figure \ref{fig:lemma:elementary-1c}. Similarly to Case 3, also in this case we derive a contradiction.

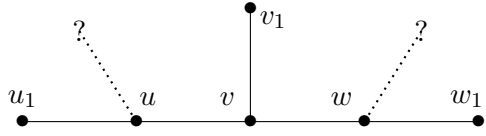
\begin{figure}[h]
    \centering
 \begin{tikzpicture}[scale=1.5]
\draw (-1,0)--(0,0)--(1,0)--(2,0)--(3,0);
\draw (1,0)--(1,1);
\node (u) at  (0,0) {$\bullet$};
\node (v) at  (1,0) {$\bullet$};
\node (w) at  (2,0) {$\bullet$};
\node (w1) at  (3,0) {$\bullet$};
\node (u1) at  (-1,0) {$\bullet$};
\node (v1) at  (1,1) {$\bullet$};
\node (u) at  (0.1,0.2) {$u$};
\node (v) at  (0.8,0.2) {$v$};
\node (w) at  (1.8,0.2) {$w$};
\node (w1) at  (2.9,0.2) {$w_1$};
\node (u1) at  (-1,0.2) {$u_1$};
\node (v1) at  (1.2,0.9) {$v_1$};
\node (w2) at  (2.5,0.8) {$?$};
\node (u2) at  (-0.5,0.8) {$?$};
\draw[thick,dotted] (-0.5,0.8)--(0,0);
\draw[thick,dotted] (2.5,0.8)--(2,0);
\end{tikzpicture}
    \caption{A graph arising in the proof of Lemma \ref{lemma:elementary-1}}
    \label{fig:lemma:elementary-1c}
\end{figure}
\end{enumerate}
\end{proof}

We now fix the following notations. Given a vertex $v$, we let
\begin{align*}
    \mathcal{N}_{2,3}(v)&:=\{w\sim v:2\le \deg w\le 3\},\\
     \mathcal{N}_{\geq 4}(v)&:=\{w\sim v:\deg w\ge 4\},\\
      \mathcal{N}_{2}(v)&:=\{w\sim v:\deg w= 2\}.
\end{align*}

\begin{lemma}\label{lemma:23}
Let $\Gamma$ be a connected graph on $N\geq 3$ vertices such that $\varepsilon>\frac{1}{2}$. Then, for each $v\in V$, $|\mathcal{N}_{2,3}(v)|\le 1$.
\end{lemma}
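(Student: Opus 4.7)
The plan is to prove Lemma~\ref{lemma:23} by contradiction. Suppose some vertex $v$ has two distinct neighbors $u_1,u_2\in\mathcal{N}_{2,3}(v)$. I would first apply Lemma~\ref{lemma:elementary-1} to the triple $u_1\sim v\sim u_2$: its outer vertices $u_1,u_2$ have degree in $\{2,3\}$, and the lemma forbids all three degrees from lying in $\{2,3\}$, so $\deg v\notin\{2,3\}$. Combined with $\deg v\ge 2$ (since $v$ has two neighbors) this forces $\deg v\ge 4$. Thus $v$ is a comparatively high-degree vertex flanked by two low-degree neighbors.

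Next I would extract structural constraints by applying Lemma~\ref{lemma:key-tech} to the pair $(u_1,u_2)$. Because $v\in\mathcal{N}(u_1)\cap\mathcal{N}(u_2)$, the lemma gives $\sum_{w\in\mathcal{N}(u_1)\triangle\mathcal{N}(u_2)}(\tfrac{1}{\deg w}-\tfrac14)>\tfrac{|\mathcal{N}(u_1)\cap\mathcal{N}(u_2)|}{2}\ge \tfrac12$; since the symmetric difference has size at most $\deg u_1+\deg u_2-2\le 4$, at least one of its vertices must have degree~$1$. A short case split on $(\deg u_1,\deg u_2)\in\{(2,2),(2,3),(3,3)\}$ and on the intersection size handles the trivial cases: $\mathcal{N}(u_1)=\mathcal{N}(u_2)$ makes $u_1,u_2$ duplicate vertices and produces $1$ as an eigenvalue, contradicting $\varepsilon>1/2$; and $(\deg u_1,\deg u_2)=(2,3)$ with intersection of size $2$ would require a single vertex $w$ with $1/\deg w>5/4$, impossible. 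In the remaining sub-cases each $u_i$ has an outside neighbor $w_i\in\mathcal{N}(u_i)\setminus\{v\}$, and the inequality pins down the degrees of $w_1,w_2$ (typically $1$ or $2$, using also Lemma~\ref{lemma:deg3} applied to $u_i$ to rule out $\deg w_i\ge 4$).

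For the remaining sub-cases I would substitute a test function concentrated on $\{v,u_1,u_2,w_1,w_2\}$ into the variational inequality~\eqref{eq:important}. A representative choice, settling the hardest sub-case $\deg u_1=\deg u_2=2,\ \deg w_1=\deg w_2=1$, is $f=\chi_v-\chi_{w_1}-\chi_{w_2}$: a direct computation shows that the LHS of~\eqref{eq:important} collapses to $\sum_{x\in\mathcal{N}(v)\setminus\{u_1,u_2\}}\tfrac{1}{\deg x}$ and the RHS becomes $\tfrac{\deg v+2}{4}$, while Lemma~\ref{lemma:easycase}(1) (which forbids two degree-$1$ neighbors of $v$) bounds the LHS by $\tfrac{\deg v-1}{2}$, yielding a contradiction when $\deg v=4$. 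For larger $\deg v$ or for sub-cases where $\deg w_i\ge 2$, I would use the analogous weighted function $f=\alpha\chi_v+\chi_{u_1}+\chi_{u_2}+\beta_1\chi_{w_1}+\beta_2\chi_{w_2}$ and argue that the associated small quadratic form fails to be positive definite, via a determinant computation whose entries are controlled by Lemmas~\ref{lemma:deg3} and~\ref{lemma:key-tech}. The main obstacle I expect is precisely this case analysis: one must produce a strict violation of~\eqref{eq:important} uniformly across several local configurations (triangles through $v,u_i,w_i$; coincidences of $w_1,w_2$ with $\mathcal{N}(v)$; higher values of $\deg w_i$), and the numerical margin is tightest when $\deg v$ is small, where Lemmas~\ref{lemma:deg3},~\ref{lemma:easycase},~\ref{lemma:elementary-1} must be invoked sharply to close the gap.
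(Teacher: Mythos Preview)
Your opening moves are sound: picking $u_1,u_2\in\mathcal{N}_{2,3}(v)$, invoking Lemma~\ref{lemma:elementary-1} to force $\deg v\ge 4$, and using Lemma~\ref{lemma:key-tech} to manufacture pendants $w_i$ are all correct, as is your computation with $f=\chi_v-\chi_{w_1}-\chi_{w_2}$ for $\deg v=4$. But there is a genuine gap for $\deg v\ge 5$, and the ``quadratic form / determinant'' fallback cannot close it.

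The obstruction is structural. Any test function supported on $\{v,u_1,u_2,w_1,w_2\}$ with $f(v)\ne 0$ picks up, on the LHS of~\eqref{eq:important}, a contribution $\frac{f(v)^2}{\deg x}$ from \emph{every} $x\in\mathcal{N}(v)$. If $\mathcal{N}_{2,3}(v)$ contains further vertices beyond $u_1,u_2$ (each contributing at least $\tfrac13$), the LHS inflates and your bound $\mathrm{LHS}\le\frac{\deg v-1}{2}$ already exceeds $\mathrm{RHS}=\frac{\deg v+2}{4}$ once $\deg v\ge 5$. No $5\times 5$ determinant can repair this, because the offending coefficients $\frac{1}{\deg x}$ for $x\in\mathcal{N}(v)\setminus\{u_1,u_2\}$ are not controlled by anything local to $u_1,u_2,w_1,w_2$; they depend on how many \emph{other} low-degree neighbors $v$ has.

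The paper's remedy is to use \emph{all} of $\mathcal{N}_{2,3}(v)$ at once. The key step (your pendant argument, applied to every pair) shows that for any $v_i,v_j\in\mathcal{N}_{2,3}(v)$ one has $\min_{x\in\mathcal{N}(v_i)\triangle\mathcal{N}(v_j)}\deg x=1$; otherwise a degree-$2$ or $3$ vertex in the symmetric difference together with $v_i,v_j$ yields a path contradicting Lemma~\ref{lemma:elementary-1}. Hence all but at most one vertex of $\mathcal{N}_{2,3}(v)$ carries a degree-$1$ pendant. Now set $f(v)=1$, $f=-1$ on \emph{every} such pendant, and $f=0$ elsewhere: this cancels the LHS contribution at each $v_i$ with a pendant, leaving at most $\tfrac12+\tfrac14|\mathcal{N}_{\ge 4}(v)|$ on the LHS against at least $\tfrac14(|\mathcal{N}_{\ge 4}(v)|+3)$ on the RHS, a contradiction independent of $\deg v$. (A minor variant handles the case where $v$ itself has a degree-$1$ neighbor.) The missing idea in your sketch is precisely this global cancellation: you need as many $-1$'s as there are low-degree neighbors of $v$, not just two.
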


\begin{proof}Fix $v\in V$ and suppose the contrary, that is, $|\mathcal{N}_{2,3}(v)|\ge 2$.
\begin{enumerate}[{Case} 1:]
    \item $\mathcal{N}(v)=\mathcal{N}_{\geq 4}(v)\cup \mathcal{N}_{2,3}(v)$.   
    
In this case, for any two vertices $v_i,v_j\in \mathcal{N}_{2,3}(v)$, $$\min\limits_{x\in \mathcal{N}(v_i)\triangle \mathcal{N}(v_j)}\deg x=1,$$ because otherwise, we can reduce to  Lemma \ref{lemma:elementary-1}. Therefore, except for at most one vertex in $\mathcal{N}_{2,3}(v)$, any other vertex in  $\mathcal{N}_{2,3}(v)$ is adjacent to a vertex of degree $1$. Now, let $f(x):=-1$ if $\deg x=1$ and $x$ is adjacent to some vertex in $\mathcal{N}_{2,3}(v)$, let $f(v):=1$ and let $f(y):=0$ otherwise. Then, by \eqref{eq:important} we obtain 
\begin{align*}
\frac12+\frac14|\mathcal{N}_{\geq 4}(v)|&\ge\sum\limits_{w\in V}\frac{1}{\deg w}\left(\sum\limits_{v\in \mathcal{N}(w)}f(v)\right)^2\\&> \frac14\sum\limits_{w\in V} \deg w\cdot f(w)^2 \\&\ge \frac14 (\deg v+|\mathcal{N}_{2,3}(v)|-1)  \\&\ge \frac14 (|\mathcal{N}_{\geq 4}(v)|+3),
\end{align*}
which is a contradiction.
    \item There exists $ w\in \mathcal{N}(v)$ with $\deg w=1$. Then, by Lemma \ref{lemma:easycase}, $$\mathcal{N}(v)\setminus\{w\}=\mathcal{N}_{\geq 4}(v)\cup \mathcal{N}_{2,3}(v).$$
Similarly to the previous case, this implies that \begin{align*}
1+\frac14|\mathcal{N}_{\geq 4}(v)|&\ge\sum\limits_{w\in V}\frac{1}{\deg w}\left(\sum\limits_{v\in \mathcal{N}(w)}f(v)\right)^2\\&> \frac14\sum\limits_{w\in V} \deg w\cdot f(w)^2 \\&\ge \frac14 (\deg v+|\mathcal{N}_{2,3}(v)|)  \\&\ge \frac14 (|\mathcal{N}_{\geq 4}(v)|+4),
\end{align*}
which is a contradiction.
\end{enumerate}

\end{proof}

\begin{lemma}\label{lemma:1}
Let $\Gamma$ be a connected graph on $N\geq 3$ vertices such that $\varepsilon>\frac{1}{2}$. If $w\sim v$ and $\deg w=1$, then $\deg v\le 2$.
\end{lemma}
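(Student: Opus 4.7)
The plan is to assume for contradiction that $\deg v \geq 3$ and work in two stages. The first stage uses the test function $f = \mathbf{1}_w$ (indicator of the pendant $w$) in the inequality \eqref{eq:important}. Because $w$ is a pendant with only neighbor $v$, the only vertex $z$ for which $\sum_{u \in \mathcal{N}(z)} f(u) \ne 0$ is $z = v$, so the numerator collapses to $1/\deg v$ and the denominator is $\deg w = 1$. The inequality therefore yields $1/\deg v > 1/4$, which forces $\deg v \leq 3$. So the only remaining case is $\deg v = 3$.

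Set $\mathcal{N}(v) = \{w, v_1, v_2\}$. Applying Lemma \ref{lemma:elementary-1} to $v_1 \sim v \sim v_2$ (noting $\deg v = 3 \in \{2,3\}$) shows that at least one of $\deg v_1, \deg v_2$ is $\geq 4$; say $\deg v_1 \geq 4$. By Lemma \ref{lemma:23}, $|\mathcal{N}_{2,3}(v_1)| \leq 1$; but $v \in \mathcal{N}(v_1)$ with $\deg v = 3$, so $v$ is the unique element of $\mathcal{N}_{2,3}(v_1)$ and every $x \in \mathcal{N}(v_1) \setminus \{v\}$ has $\deg x = 1$ or $\deg x \geq 4$.

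Now apply Lemma \ref{lemma:key-tech} to the pair $(w, v_1)$. They share the common neighbor $v$, and since $\mathcal{N}(w) = \{v\}$ we have $\mathcal{N}(w) \triangle \mathcal{N}(v_1) = \mathcal{N}(v_1) \setminus \{v\}$. Thus
\begin{equation*}
\sum_{x \in \mathcal{N}(v_1)\setminus\{v\}} \left(\frac{1}{\deg x} - \frac14\right) > \frac12.
\end{equation*}
If no vertex in $\mathcal{N}(v_1) \setminus \{v\}$ were a pendant, then every such $x$ would have $\deg x \geq 4$, each term would be $\leq 0$, and the sum would be nonpositive — a contradiction. So $v_1$ admits some pendant neighbor $w_1 \neq v$.

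To finish, apply \eqref{eq:important} once more with $f = \mathbf{1}_{w_1}$, by the same computation as in the first stage this gives $1/\deg v_1 > 1/4$, i.e.\ $\deg v_1 \leq 3$, contradicting $\deg v_1 \geq 4$. The main obstacle is that the natural test function $\mathbf{1}_w$ only rules out $\deg v \geq 4$; eliminating the borderline case $\deg v = 3$ requires combining Lemmas \ref{lemma:easycase}, \ref{lemma:elementary-1}, \ref{lemma:key-tech}, and \ref{lemma:23} to propagate the pendant structure outward, producing a second pendant $w_1$ on which the indicator-function trick can be reapplied.
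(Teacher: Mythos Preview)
Your argument is correct, with one small clarification needed: when you invoke Lemma~\ref{lemma:elementary-1} on $v_1 \sim v \sim v_2$ to conclude that at least one of $\deg v_1, \deg v_2$ is $\geq 4$, that lemma by itself only excludes both degrees lying in $\{2,3\}$. You also need Lemma~\ref{lemma:easycase}(1) here (you list it in your closing summary but do not insert it at this step) to rule out $\deg v_i = 1$, since $w$ is already a pendant at $v$ and two pendants cannot share a neighbor.

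Your route is genuinely different from the paper's. The paper assumes $\deg v \geq 3$ and argues, in the style of Lemma~\ref{lemma:23}, that every neighbor $u \in \mathcal{N}(v)$ has a pendant neighbor $u_1$; it then applies \eqref{eq:important} once with the test function $f(v)=1$, $f(u_1)=-1$ for each such pendant, $f=0$ otherwise, obtaining numerator $\leq 1$ against $\tfrac14(2\deg v - 1) > 1$ on the right. You instead use only indicator test functions: $\mathbf{1}_w$ reduces the problem to the borderline case $\deg v = 3$, and then the structural Lemmas~\ref{lemma:easycase}, \ref{lemma:elementary-1}, \ref{lemma:23}, \ref{lemma:key-tech} are chained to locate a second pendant $w_1$ attached to some neighbor $v_1$ with $\deg v_1 \geq 4$, whereupon $\mathbf{1}_{w_1}$ delivers the contradiction. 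The paper's version is shorter but leans on the somewhat terse ``similarly to the proof of Lemma~\ref{lemma:23}'' step; yours keeps the test functions trivial and pushes all the work into already-established lemmas, which makes the logic more modular.
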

\begin{proof}

Assume the contrary, i.e., $w\sim v$, $\deg w=1$ and $\deg v\ge3$.  Then, similarly to the proof of Lemma \ref{lemma:23}, we have that for any $u\sim v$, there exists $u_1\sim u$ with $\deg u_1=1$. Therefore,
\begin{align*}
1&\ge\sum\limits_{w\in V}\frac{1}{\deg w}\left(\sum\limits_{v\in \mathcal{N}(w)}f(v)\right)^2\\&> \frac14\sum\limits_{w\in V} \deg w\cdot f(w)^2 \\&\ge \frac14 (\deg v+|\mathcal{N}(v)|-1)  \\&\ge \frac14 (2\deg v-1) >1,
\end{align*}
which is a contradiction.

\end{proof}

\begin{lemma}\label{lemma:step1}
Let $\Gamma$ be a connected graph on $N\geq 3$ vertices such that $\varepsilon>\frac{1}{2}$. Then, there exist three vertices $u\sim v\sim w$ with $\deg u\le 3$ and $\deg w\le 3$.
\end{lemma}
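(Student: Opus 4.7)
The plan is to split into cases according to whether $\Gamma$ has a vertex of degree $1$, applying Lemma \ref{lemma:key-tech} in each case to a cleverly chosen pair whose common neighbor is a prescribed vertex $v$; Lemma \ref{lemma:23} then forces the corresponding symmetric difference to be almost devoid of low-degree vertices.

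First I would handle the case where some $u$ has $\deg u = 1$. Lemma \ref{lemma:1} forces $\deg v \leq 2$ for its unique neighbor $v$, and connectedness with $N \geq 3$ gives $\deg v = 2$; let $w$ be the other neighbor of $v$. Lemma \ref{lemma:easycase}(2) rules out $\deg w \in \{1,2\}$, so $\deg w \geq 3$. To push this to $\deg w = 3$, I would apply Lemma \ref{lemma:key-tech} to the pair $(u,w)$, whose unique common neighbor is $v$. Since $v \in \mathcal{N}_{2,3}(w)$ and Lemma \ref{lemma:23} gives $|\mathcal{N}_{2,3}(w)| \leq 1$, we get $\mathcal{N}_{2,3}(w) = \{v\}$, so every $y \in \mathcal{N}(w) \setminus \{v\}$ has degree $1$ or $\geq 4$. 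Only degree-$1$ summands contribute positively to the inequality of Lemma \ref{lemma:key-tech}; hence if $\deg w \geq 4$ some $y \sim w$ with $\deg y = 1$ must exist, and then Lemma \ref{lemma:1} applied to $(y,w)$ forces $\deg w \leq 2$, a contradiction. Thus $\deg w = 3$ and the path $u \sim v \sim w$ satisfies the conclusion.

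The case $\delta(\Gamma) \geq 2$ should turn out to be impossible under $\varepsilon > \tfrac12$. Combining Lemmas \ref{lemma:deg3} and \ref{lemma:23} gives $|\mathcal{N}_{2,3}(x)| = 1$ for every vertex $x$. Pick any $v$ with $\deg v \in \{2,3\}$, let $a$ be its unique low-degree neighbor, and pick any other neighbor $b$ of $v$ (necessarily with $\deg b \geq 4$). Apply Lemma \ref{lemma:key-tech} to $(a,b)$, which share the common neighbor $v$: since $v \in \mathcal{N}_{2,3}(a) \cap \mathcal{N}_{2,3}(b)$, Lemma \ref{lemma:23} gives $\mathcal{N}_{2,3}(a) = \mathcal{N}_{2,3}(b) = \{v\}$, but $v$ being a common neighbor drops out of $\mathcal{N}(a) \triangle \mathcal{N}(b)$; in the absence of degree-$1$ vertices, every remaining term $1/\deg y - 1/4$ is $\leq 0$, contradicting the required strict lower bound $\tfrac12$. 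The main obstacle is precisely engineering the pair $(a,b)$ so that Lemma \ref{lemma:23} wipes out \emph{all} low-degree contributions to the symmetric difference; once the degree-$1$ dichotomy is in place and $v$ is used as the common neighbor, each case collapses in a single application of Lemma \ref{lemma:key-tech}.
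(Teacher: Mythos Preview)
Your proof is correct, but it takes a substantially different route from the paper's. The paper's argument is a two-line application of Lemmas~\ref{lemma:deg3} and~\ref{lemma:key-tech} only: iterating Lemma~\ref{lemma:deg3} yields adjacent $u_1\sim u_2$ with $\deg u_1,\deg u_2\le 3$; picking any $v\sim u_2$ and applying \eqref{eq:repeat-key} to the pair $(u_1,v)$ (which share the neighbor $u_2$) produces some $w\in\mathcal{N}(u_1)\triangle\mathcal{N}(v)$ with $\deg w\le 3$, and whichever side $w$ lies on gives a $2$-path with low-degree endpoints through $u_1$ or through $v$.

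By contrast, you split on the minimum degree and bring in the heavier Lemmas~\ref{lemma:easycase}, \ref{lemma:23}, and~\ref{lemma:1}. In your degree-$1$ case you pin down the local structure until the conclusion falls out (note, incidentally, that your argument for $\deg w\ge 4$ works verbatim for $\deg w=3$, so this case is in fact vacuous rather than yielding $\deg w=3$; your phrasing is still logically sound). In the case $\delta\ge 2$ you actually prove more than the lemma asks: your pair $(a,b)$ with common neighbor $v$ and $\mathcal{N}_{2,3}(a)=\mathcal{N}_{2,3}(b)=\{v\}$ kills every positive contribution in \eqref{eq:repeat-key}, so the hypothesis $\varepsilon>\tfrac12$ is outright impossible when $\delta\ge 2$. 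That is a nice shortcut toward Theorem~\ref{thm:part1} that the paper does not take, but it comes at the cost of relying on Lemmas~\ref{lemma:23} and~\ref{lemma:1}, which the paper keeps logically independent of Lemma~\ref{lemma:step1}. The paper's route is more economical for this lemma; yours folds in more structure and effectively collapses part of the later argument.
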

\begin{proof}
By Lemma \ref{lemma:deg3}, there exist $u_1\sim u_2$ with $\deg u_1\le 3$ and $\deg u_2\le 3$. If $v\sim u_2$, then by \eqref{eq:repeat-key} there exists  $w\in \mathcal{N}(u_1)\triangle \mathcal{N}(v)$ with $\deg w\le 3$.\newline
If $w\in \mathcal{N}(u_1)\setminus  \mathcal{N}(v)$, then we have $w\sim u_1\sim u_2$ with $\deg w\le 3$ and 
$\deg u_2\le 3$; while if $w\in \mathcal{N}(v)\setminus  \mathcal{N}(u_1)$, then we have $w\sim v\sim u_2$ with $\deg w\le 3$ and 
$\deg u_2\le 3$.
\end{proof}

\begin{lemma}\label{lemma:step2}
Let $\Gamma$ be a connected graph on $N\geq 3$ vertices such that $\varepsilon>\frac{1}{2}$. Then, there exist three vertices $ u\sim v\sim w$ such that $\deg u=1$, $\deg v=2$ and $\deg w\in\{2,3\}$.
\end{lemma}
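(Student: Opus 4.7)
The plan is to combine Lemma \ref{lemma:step1} (which already produces a path of two vertices of degree at most $3$ around a central vertex) with the structural restrictions from Lemmas \ref{lemma:easycase}, \ref{lemma:23} and \ref{lemma:1}, and then read off the required degree sequence.

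First, I would apply Lemma \ref{lemma:step1} to obtain three (distinct) vertices $u\sim v\sim w$ with $\deg u\le 3$ and $\deg w\le 3$. Since $u$ and $w$ are two distinct neighbors of $v$ whose degrees lie in $\{1,2,3\}$, the natural next move is to apply Lemma \ref{lemma:23} to $v$: it tells us that at most one element of $\mathcal N(v)$ has degree in $\{2,3\}$. Therefore at most one of $u,w$ has degree in $\{2,3\}$, and the other is forced to have degree $1$.

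Next, I would do a short case analysis. If both $\deg u=\deg w=1$, Lemma \ref{lemma:easycase}(1) rules out the existence of a vertex $v$ with $u\sim v\sim w$, a contradiction. Hence, up to swapping the names of $u$ and $w$ (which is legitimate because the hypothesis of Lemma \ref{lemma:step1} is symmetric in $u$ and $w$), we may assume $\deg u=1$ and $\deg w\in\{2,3\}$. Now Lemma \ref{lemma:1} applied to the pair $(u,v)$ yields $\deg v\le 2$, and since $v$ has the two distinct neighbors $u$ and $w$, we get $\deg v=2$. This is exactly the conclusion of the lemma.

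There is no real obstacle here: the work was done in the previous lemmas, and this step only reassembles them. The only point one has to be careful about is that the three vertices produced by Lemma \ref{lemma:step1} are genuinely distinct (so that $u\ne w$ is available for invoking Lemma \ref{lemma:easycase}(1)) and that the roles of $u$ and $w$ are interchangeable, which allows the "without loss of generality" step.
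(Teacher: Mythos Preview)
Your proof is correct and follows essentially the same approach as the paper: both start from Lemma~\ref{lemma:step1}, use Lemma~\ref{lemma:23} and Lemma~\ref{lemma:easycase}(1) to force exactly one of $u,w$ to have degree $1$ and the other to have degree in $\{2,3\}$, and then invoke Lemma~\ref{lemma:1} to conclude $\deg v=2$. The only difference is cosmetic ordering of the two restriction lemmas, and your added remarks on distinctness and symmetry of $u,w$ make explicit what the paper leaves implicit.
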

\begin{proof}
Fix $u\sim v\sim w$ with $\deg u\le 3$ and $\deg w\le 3$ as in Lemma \ref{lemma:step1}. By Lemma \ref{lemma:easycase}, at most one between $u$ and $w$ has degree $1$, and by Lemma \ref{lemma:23}, at most one between $u$ and $w$ has degree $2$ or $3$. Therefore, we may assume that $\deg u=1$  and $\deg w\in\{2,3\}$. By Lemma \ref{lemma:1}, this implies that $\deg v=2$.
\end{proof}

We are now finally able to prove Theorem \ref{thm:part1}.
\begin{proof}[Proof of Theorem \ref{thm:part1}]  Assume by contradiction that there exists a connected graph $\Gamma$ on $N\geq 3$ vertices such that $\varepsilon>\frac{1}{2}$. Then, by Lemma \ref{lemma:step2}, there exist $u\sim v\sim w$ such that $\deg u=1$, $\deg v=2$ and $\deg w\in\{2,3\}$. By \eqref{eq:repeat-key}, there exists a vertex $w_1\in \mathcal{N}(w)\setminus\{v\}$ satisfying $\deg w_1=1$, and by Lemma \ref{lemma:1}, $\deg w=2$. Therefore $\Gamma$ is the path of length $3$ in Figure \ref{fig:3path}. But as we know this is a contradiction, since $\varepsilon=\frac{1}{2}$ for the path of length $3$, which is also the $1$-book graph.
\end{proof}

\subsection{Optimal cases}\label{section:optimal}

In the previous section we proved that $\varepsilon\leq\frac{1}{2}$ for any connected graph $\Gamma$ on $N\geq 3$ vertices. Hence, in order to prove Theorem \ref{thm:main}, it is left to prove that $\varepsilon=\frac{1}{2}$ if and only if $\Gamma$ is either a petal graph or a book graph. We dedicate this section to the proof of this claim, that we further split into three parts:
\begin{enumerate}
    \item In Section \ref{section:d3} we prove that, if the smallest vertex degree $d$ is greater than or equal to $3$, then $\varepsilon<\frac{1}{2}$. In fact, we prove an even stronger result, since we show that, in this case,
    \begin{equation*}
        \varepsilon\leq\frac{\sqrt{d-1}}{d}<\frac{1}{2}.
    \end{equation*}
    \item In Section \ref{section:d2} we show that, if $d=2$, then $\varepsilon=\frac{1}{2}$ if and only if $\Gamma$ is either a petal graph or a book graph.
    \item In Section \ref{section:d1} we show that, if $d=1$ and $\Gamma$ is not the $1$--book graph, then $\varepsilon<\frac{1}{2}$.
\end{enumerate}
Before, we observe that, by Lemma \ref{lemma:first} and Theorem \ref{thm:part1}, if a connected graph $\Gamma$ on $N\geq 3$ vertices is such that $\varepsilon=\frac{1}{2}$, then
\begin{equation}\label{eq:important-equa}
\sum\limits_{w\in V}\frac{1}{\deg w}\left(\sum\limits_{v\in \mathcal{N}(w)}f(v)\right)^2\geq \frac14\sum\limits_{w\in V} \deg w\cdot f(w)^2, \quad \forall f\in C(V)\setminus \{\mathbf{0}\}.
\end{equation}Moreover, with the same proof as Lemma \ref{lemma:key-tech}, one can prove the following
\begin{lemma}Let $\Gamma$ be a connected graph on $N\geq 3$ vertices such that $\varepsilon=\frac{1}{2}$. If $\mathcal{N}(u)\cap \mathcal{N}(v)\ne\emptyset$, then $\mathcal{N}(u)\triangle  \mathcal{N}(v)\ne\emptyset$ and 
\begin{equation}\label{eq:repeat-key-opti}
\sum_{w\in \mathcal{N}(u)\triangle \mathcal{N}(v)}\left(\frac{1}{\deg w}-\frac14\right)\geq \frac12.
\end{equation}
\end{lemma}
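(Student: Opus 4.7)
The plan is to transcribe the proof of Lemma \ref{lemma:key-tech} verbatim, replacing the strict Rayleigh-quotient bound \eqref{eq:important} by its non-strict counterpart \eqref{eq:important-equa}. All of the algebraic manipulations in the Lemma \ref{lemma:key-tech} argument preserve the direction of inequality, so every ``$>$'' mechanically weakens to ``$\geq$'' without further work.

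Concretely, the first step is to apply \eqref{eq:important-equa} to the same test function used earlier, namely $f(v):=1$, $f(u):=-1$, and $f(x):=0$ for all other $x\in V$. For each $w\in V$, the inner sum $\sum_{x\in \mathcal{N}(w)}f(x)$ is $f(u)+f(v)=0$ when $w\in \mathcal{N}(u)\cap \mathcal{N}(v)$, equals $\pm 1$ when $w$ lies in $\mathcal{N}(u)\triangle \mathcal{N}(v)$, and vanishes otherwise. Hence the numerator collapses to $\sum_{w\in \mathcal{N}(u)\triangle \mathcal{N}(v)}\frac{1}{\deg w}$ and the denominator to $\deg u+\deg v$, so that
$$\sum_{w\in \mathcal{N}(u)\triangle \mathcal{N}(v)}\frac{1}{\deg w}\ \geq\ \frac14(\deg u+\deg v).$$

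The second step is to invoke the identity $\deg u+\deg v=|\mathcal{N}(u)\triangle \mathcal{N}(v)|+2|\mathcal{N}(u)\cap \mathcal{N}(v)|$ and rearrange the above into
$$\sum_{w\in \mathcal{N}(u)\triangle \mathcal{N}(v)}\left(\frac{1}{\deg w}-\frac14\right)\ \geq\ \frac12\,|\mathcal{N}(u)\cap \mathcal{N}(v)|,$$
at which point the hypothesis $|\mathcal{N}(u)\cap \mathcal{N}(v)|\geq 1$ supplies the desired lower bound $\frac12$ on the right, yielding \eqref{eq:repeat-key-opti}. Non-emptiness of the symmetric difference then follows automatically, since an empty index set would make the left-hand side zero and contradict the bound $\geq \frac12$.

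I do not anticipate any genuine obstacle: the excerpt itself flags that ``the same proof'' of Lemma \ref{lemma:key-tech} applies, and the only substantive observation is that \eqref{eq:important-equa} is the precise non-strict analogue of \eqref{eq:important} available in the $\varepsilon=\frac12$ regime, which has just been established via Lemma \ref{lemma:first} and Theorem \ref{thm:part1}.
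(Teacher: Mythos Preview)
Your proposal is correct and matches the paper's approach exactly: the paper explicitly states that the lemma follows ``with the same proof as Lemma \ref{lemma:key-tech},'' which is precisely what you carry out, replacing the strict bound \eqref{eq:important} by the non-strict \eqref{eq:important-equa}. Your handling of both the inequality and the non-emptiness of $\mathcal{N}(u)\triangle\mathcal{N}(v)$ is sound.
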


\subsubsection{The case $d\geq 3$}\label{section:d3}
In this section we prove the following
\begin{theo}\label{thm:d3}
For any connected graph $\Gamma$ on $N\geq 3$ vertices with smallest degree $d\geq 3$,
   \begin{equation*}
        \varepsilon\leq\frac{\sqrt{d-1}}{d}<\frac{1}{2}.
    \end{equation*}
\end{theo}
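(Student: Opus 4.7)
The plan is to apply Lemma~\ref{lemma:first} with an explicit test function and show that the resulting Rayleigh quotient is at most $(d-1)/d^2$. The strict inequality $\sqrt{d-1}/d<1/2$ is an algebraic triviality: $4(d-1)<d^2$ rewrites as $(d-2)^2>0$, which holds for every $d\ge 3$.

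The workhorse is the two-vertex test function $f=\mathbf{1}_u-\mathbf{1}_{u'}$. A direct calculation, analogous to the proof of Lemma~\ref{lemma:key-tech}, gives
\[
R(f)=\frac{\sum_{w\in\mathcal{N}(u)\triangle\mathcal{N}(u')}\frac{1}{\deg w}}{\deg u+\deg u'}.
\]
If $\deg u=\deg u'=d$ and $k:=|\mathcal{N}(u)\cap\mathcal{N}(u')|\ge 1$, bounding $1/\deg w\le 1/d$ together with $|\mathcal{N}(u)\triangle\mathcal{N}(u')|=2d-2k$ yields $R(f)\le(d-k)/d^2\le(d-1)/d^2$, which is exactly the required bound. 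So it is enough to exhibit a pair of minimum-degree vertices sharing at least one common neighbour. Such a pair exists whenever two minimum-degree vertices lie at graph distance $2$ (their intermediate vertex is then common) or whenever some minimum-degree edge sits on a triangle; this already handles the large majority of cases (regular graphs such as Heawood, Petersen, and $K_{d+1}$, as well as any graph with triangles on minimum-degree edges).

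The residual case is the restrictive one in which the set $V_d$ of minimum-degree vertices has pairwise disjoint open neighbourhoods, so that any two elements of $V_d$ are either adjacent without a triangle or are separated by graph distance at least $3$. In this regime I fix a vertex $v\in V_d$ whose ball of radius $2$ meets $V_d$ only at $v$; every $u\in\mathcal{N}(v)$ then satisfies $\deg u\ge d+1$. On $v$ I propose the radial three-level test function
\[
f(v)=1,\quad f(u)=0\ \ (u\in\mathcal{N}(v)),\quad f(x)=-\tfrac{1}{d}\ \ (x\text{ at distance }2\text{ from }v),
\]
and $f\equiv 0$ elsewhere, modelled on the zero eigenfunction of the normalized adjacency operator on the asymmetric rooted tree of root-degree $d$ and interior degree $d+1$. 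In a locally tree-like configuration one checks $\sum_{x\in\mathcal{N}(w)}f(x)=1-(\deg w-1)/d\le 0$ for every $w\in\mathcal{N}(v)$, so the only non-vanishing contributions to the numerator come from vertices at distance~$3$ from $v$, each weighted by $1/\deg w\le 1/d$, while the denominator is at least $2d$; comparing yields $R(f)\le(d-1)/d^2$, with strict inequality.

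The main obstacle is the analysis of the radial test in the presence of non-tree structure: triangles through $v$, shortcut edges among distance-$2$ vertices, and shared distance-$2$ neighbours of two distinct $w\in\mathcal{N}(v)$ all inject cross terms into the numerator. The crucial cushion is the strict inequality $\deg u\ge d+1$ for $u\in\mathcal{N}(v)$ forced by the case hypothesis; the extra factor $1+1/d$ is exactly what absorbs these error terms while preserving $R(f)\le(d-1)/d^2$. Verifying that this cushion suffices uniformly in the local combinatorics around $v$ will be the most delicate step of the argument.
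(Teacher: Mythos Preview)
Your two-vertex test computation is correct and is exactly the endgame of the paper's proof: once you have $u,u'\in V_d$ with $\mathcal{N}(u)\cap\mathcal{N}(u')\ne\emptyset$, the bound $R(\mathbf{1}_u-\mathbf{1}_{u'})\le(d-1)/d^2$ follows immediately. The issue is your treatment of the residual case. You assert that you can ``fix a vertex $v\in V_d$ whose ball of radius~$2$ meets $V_d$ only at $v$'', but this can fail. If $V_d$ consists of a single edge $v_1v_2$ (both of degree $d$, no common neighbour, all other vertices of degree $\ge d+1$), then neither $v_1$ nor $v_2$ has the required property, and your radial test never gets off the ground. Such graphs are easy to build for any $d\ge 3$. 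Moreover, the two-vertex test on the adjacent pair $v_1,v_2$ only gives $R\le\frac{2/d+(2d-2)/(d+1)}{2d}$, which is strictly larger than $(d-1)/d^2$, so that shortcut does not rescue the argument either. Even when your special $v$ does exist, the radial analysis is fragile: a single distance-$2$ vertex of very large degree inflates the number of distance-$3$ contributions to the numerator faster than its $(\deg x)/d^2$ contribution to the denominator can compensate, and the ``cushion'' $\deg u\ge d+1$ for $u\in\mathcal{N}(v)$ does not touch this at all.

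The paper avoids the residual case entirely by running the argument by contradiction. Assuming $\varepsilon>\sqrt{d-1}/d$, the two-vertex inequality $\sum_{w\in\mathcal{N}(u)\triangle\mathcal{N}(v)}\bigl(\tfrac{1}{\deg w}-\tfrac{d-1}{d^2}\bigr)>\tfrac{2(d-1)}{d^2}$ forces at least $2d-1$ vertices of degree $\le d+1$ in every symmetric difference $\mathcal{N}(u)\triangle\mathcal{N}(v)$ (Lemma~\ref{lemma16}). Starting from any pair with a common neighbour and iterating this once (Lemma~\ref{lemma18}) produces a pair $u,v$ with $\deg u,\deg v\in\{d,d+1\}$ and a common neighbour; one further refinement (Lemma~\ref{lemma17}, Lemma~\ref{lemma19}) upgrades this to $\deg u=\deg v=d$. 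Now $|\mathcal{N}(u)\triangle\mathcal{N}(v)|\le 2d-2<2d-1$, contradicting Lemma~\ref{lemma16}. In other words, the paper \emph{proves} that your residual case is empty under the contrapositive hypothesis, rather than trying to bound $\varepsilon$ there directly. This is both shorter and avoids the uncontrolled local combinatorics that your radial test would have to tame.
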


Before that theorem, we shall  prove several preliminary results.

\begin{lemma}
Let $\Gamma$ be a connected graph on $N\geq 3$ vertices with smallest degree $d\geq 3$ and $\varepsilon>\frac{\sqrt{d-1}}{d}$. Then,
\begin{equation}\label{eq:important-equa-d}
\sum\limits_{w\in V}\frac{1}{\deg w}\left(\sum\limits_{v\in \mathcal{N}(w)}f(v)\right)^2>\frac{d-1}{d^2} \sum\limits_{w\in V} \deg w\cdot f(w)^2, \quad \forall f\in C(V)\setminus \{\mathbf{0}\}.
\end{equation}
Moreover, if $\mathcal{N}(u)\cap \mathcal{N}(v)\ne\emptyset$, then $\mathcal{N}(u)\triangle  \mathcal{N}(v)\ne\emptyset$ and 
\begin{equation}\label{eq:repeat-key-opti-d}
\sum_{w\in \mathcal{N}(u)\triangle \mathcal{N}(v)}\left(\frac{1}{\deg w}-\frac{d-1}{d^2} \right)> 2\cdot \frac{d-1}{d^2} .
\end{equation}
\end{lemma}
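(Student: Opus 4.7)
The lemma is a direct analog of the pair (\eqref{eq:important}, Lemma \ref{lemma:key-tech}) used in the previous section, with the threshold $1/4$ replaced by $(d-1)/d^2$. No new ideas are needed; I would simply transcribe those two proofs, checking that the only assumption used is the lower bound $\varepsilon^2 > (d-1)/d^2$ on the Rayleigh quotient, together with $|\mathcal{N}(u)\cap\mathcal{N}(v)|\geq 1$.

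The first inequality is immediate from Lemma \ref{lemma:first}. That lemma characterizes $\varepsilon^2$ as the minimum over nonzero $f\in C(V)$ of the Rayleigh-type quotient appearing in the statement. Since $\varepsilon^2>\bigl(\sqrt{d-1}/d\bigr)^2=(d-1)/d^2$, every nonzero $f$ must satisfy
$$\sum_{w\in V}\frac{1}{\deg w}\Bigl(\sum_{v\in\mathcal{N}(w)}f(v)\Bigr)^{2}>\frac{d-1}{d^2}\sum_{w\in V}\deg w\cdot f(w)^2,$$
which is exactly \eqref{eq:important-equa-d}.

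For the second inequality I would feed into \eqref{eq:important-equa-d} the test function that takes value $1$ at $v$, $-1$ at $u$, and $0$ elsewhere, exactly as in the proof of Lemma \ref{lemma:key-tech}. For each $w\in V$, the inner sum $\sum_{x\in\mathcal{N}(w)}f(x)$ equals $+1$ if $w\in\mathcal{N}(v)\setminus\mathcal{N}(u)$, $-1$ if $w\in\mathcal{N}(u)\setminus\mathcal{N}(v)$, and $0$ otherwise (the two contributions cancel precisely when $w\in\mathcal{N}(u)\cap\mathcal{N}(v)$). Hence the left-hand side reduces to $\sum_{w\in\mathcal{N}(u)\triangle\mathcal{N}(v)}1/\deg w$, while the right-hand side equals $\frac{d-1}{d^2}(\deg u+\deg v)$. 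In particular the inequality is strict and positive, which forces $\mathcal{N}(u)\triangle\mathcal{N}(v)\neq\emptyset$.

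To finish, I would use the standard identity $\deg u+\deg v=|\mathcal{N}(u)\triangle\mathcal{N}(v)|+2|\mathcal{N}(u)\cap\mathcal{N}(v)|$ to rewrite the inequality as
$$\sum_{w\in\mathcal{N}(u)\triangle\mathcal{N}(v)}\Bigl(\frac{1}{\deg w}-\frac{d-1}{d^2}\Bigr)>\frac{d-1}{d^2}\cdot 2|\mathcal{N}(u)\cap\mathcal{N}(v)|,$$
and then invoke the hypothesis $|\mathcal{N}(u)\cap\mathcal{N}(v)|\geq 1$ to arrive at \eqref{eq:repeat-key-opti-d}. There is no real obstacle here; the mild point worth checking is just the bookkeeping for the identity on symmetric differences when $u\sim v$ (in which case $v\in\mathcal{N}(u)$ and $u\in\mathcal{N}(v)$ automatically belong to the symmetric difference, but the identity still holds), so the argument goes through verbatim as in Lemma \ref{lemma:key-tech}.
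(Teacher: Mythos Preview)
Your proposal is correct and matches the paper's own argument exactly: the paper simply states that the first claim follows directly from Lemma~\ref{lemma:first} and that the second claim is proved as Lemma~\ref{lemma:key-tech}, which is precisely the route you spell out in detail.
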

\begin{proof}
The first claim follows directly from Lemma \ref{lemma:first}, while the second claim can be proved as Lemma \ref{lemma:key-tech}.
\end{proof}

\begin{lemma}\label{lemma16}
Let $\Gamma$ be a connected graph on $N\geq 3$ vertices with smallest degree $d\geq 3$ and $\varepsilon>\frac{\sqrt{d-1}}{d}$. If $\mathcal{N}(u)\cap \mathcal{N}(v)\ne\emptyset$, then $$|\{w\in \mathcal{N}(u)\triangle \mathcal{N}(v):\deg w\in\{d,d+1\}\}|\ge 2d-1.$$
\end{lemma}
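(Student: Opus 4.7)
The plan is to exploit the structure of inequality \eqref{eq:repeat-key-opti-d} by splitting the sum according to the sign of the summand $\frac{1}{\deg w} - \frac{d-1}{d^2}$. Since every vertex $w$ satisfies $\deg w \geq d$, the function $t \mapsto \frac{1}{t} - \frac{d-1}{d^2}$ is strictly decreasing on $[d, \infty)$ and its sign changes as $t$ crosses $\frac{d^2}{d-1}$. A short calculation pinpoints this transition at the integer level: for $\deg w = d$ one gets $\frac{1}{d^2}$; for $\deg w = d+1$ one gets $\frac{1}{d^2(d+1)}$ (both positive); and for $\deg w \geq d+2$ one gets at most $\frac{1}{d+2} - \frac{d-1}{d^2} = \frac{2-d}{d^2(d+2)} \leq 0$, since $d \geq 3$.

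Using this sign analysis, I would discard the non-positive contributions from vertices of degree $\geq d+2$ in the sum on the left-hand side of \eqref{eq:repeat-key-opti-d}. Writing $T := \{w \in \mathcal{N}(u) \triangle \mathcal{N}(v) : \deg w \in \{d, d+1\}\}$, this gives the upper bound
\begin{equation*}
\sum_{w \in \mathcal{N}(u)\triangle \mathcal{N}(v)} \left(\frac{1}{\deg w} - \frac{d-1}{d^2}\right) \;\leq\; \sum_{w \in T} \left(\frac{1}{\deg w} - \frac{d-1}{d^2}\right) \;\leq\; \frac{|T|}{d^2},
\end{equation*}
where in the last step I bound each of the at most $|T|$ positive summands by its maximum value $\frac{1}{d^2}$ (attained when $\deg w = d$).

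Comparing with \eqref{eq:repeat-key-opti-d}, which says the left-hand side exceeds $\frac{2(d-1)}{d^2}$, I obtain
\begin{equation*}
\frac{|T|}{d^2} > \frac{2(d-1)}{d^2},
\end{equation*}
so $|T| > 2(d-1) = 2d-2$. Since $|T|$ is a non-negative integer, this forces $|T| \geq 2d-1$, which is exactly the desired conclusion.

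The argument is essentially a one-step estimate from \eqref{eq:repeat-key-opti-d}, and the main technical point to pay attention to is the correct sign cutoff at $\deg w = d+2$, together with the observation that $d \geq 3$ is needed to ensure $\frac{2-d}{d^2(d+2)} \leq 0$, so that the higher-degree vertices really can be discarded without weakening the inequality. No delicate combinatorial or spectral input beyond the preceding lemma is required; the hardest part is merely identifying the right way to repackage \eqref{eq:repeat-key-opti-d} via the pointwise bound on its summand.
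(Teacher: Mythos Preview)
Your proof is correct and follows essentially the same approach as the paper: both arguments bound the sum in \eqref{eq:repeat-key-opti-d} by discarding the non-positive terms from vertices of degree $\geq d+2$ and bounding each remaining term by its maximum value $\frac{1}{d^2}$. The only cosmetic difference is that the paper phrases this as a proof by contradiction (assuming $|T|\le 2d-2$ and deriving a violation of \eqref{eq:repeat-key-opti-d}), whereas you argue directly; one minor aside is that your remark that ``$d\ge 3$ is needed'' for the sign cutoff is slightly stronger than necessary, since $\frac{2-d}{d^2(d+2)}\le 0$ already holds for $d\ge 2$.
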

\begin{proof}
Assume the contrary, then \begin{align*}
\sum_{w\in \mathcal{N}(u)\triangle \mathcal{N}(v)}\left(\frac{1}{\deg w}-\frac{d-1}{d^2} \right)&\le\sum_{\substack{w\in \mathcal{N}(u)\triangle \mathcal{N}(v)\\ \deg w\in\{d,d+1\}}}\left(\frac{1}{\deg w}-\frac{d-1}{d^2} \right)\\&< (2d-2) \left(\frac{1}{d}-\frac{d-1}{d^2}\right)=
2\cdot \frac{d-1}{d^2},
\end{align*}
which contradicts \eqref{eq:repeat-key-opti-d}.
\end{proof}
\begin{lemma}\label{lemma17}
Let $\Gamma$ be a connected graph on $N\geq 3$ vertices with smallest degree $d\geq 3$ and $\varepsilon>\frac{\sqrt{d-1}}{d}$. If $\mathcal{N}(u)\cap \mathcal{N}(v)\ne\emptyset$ and $$|\{w\in \mathcal{N}(u)\triangle \mathcal{N}(v):\deg w\in\{d,d+1\}\}|\le 2d,$$ then $|\{w\in \mathcal{N}(u)\triangle \mathcal{N}(v):\deg w=d\}|\ge 2d-3$.
\end{lemma}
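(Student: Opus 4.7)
The plan is to repeat the strategy of Lemma~\ref{lemma16}, namely to derive an upper bound on the left-hand side of \eqref{eq:repeat-key-opti-d} under the negation of the conclusion, and then show that this upper bound is strictly smaller than $2(d-1)/d^2$, yielding a contradiction. The main new ingredient is a finer split of $\mathcal{N}(u)\triangle \mathcal{N}(v)$ according to whether the degree equals $d$, equals $d+1$, or is at least $d+2$.

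The first key step is to evaluate the term $\frac{1}{\deg w}-\frac{d-1}{d^2}$ on each of these three regimes. For $\deg w=d$ it equals $\frac{1}{d^2}$; for $\deg w=d+1$ it equals $\frac{1}{d^2(d+1)}$; and for $\deg w\ge d+2$ it is non-positive, because $\frac{d^2}{d-1}=d+1+\frac{1}{d-1}<d+2$ whenever $d\ge 2$. Consequently, if we write $A:=\{w\in\mathcal{N}(u)\triangle\mathcal{N}(v):\deg w=d\}$ and $B:=\{w\in\mathcal{N}(u)\triangle\mathcal{N}(v):\deg w=d+1\}$, then
\begin{equation*}
\sum_{w\in\mathcal{N}(u)\triangle\mathcal{N}(v)}\left(\frac{1}{\deg w}-\frac{d-1}{d^2}\right)\le \frac{|A|}{d^2}+\frac{|B|}{d^2(d+1)}.
\end{equation*}

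The second key step is to assume for contradiction that $|A|\le 2d-4$, and to combine this with the standing hypothesis $|A|+|B|\le 2d$. Since the coefficient of $|A|$ is strictly larger than that of $|B|$ in the right-hand side above, this right-hand side is maximized by taking $|A|$ as large as possible and $|B|$ as large as the remaining budget allows, i.e.\ $|A|=2d-4$ and $|B|=4$. A short computation yields
\begin{equation*}
\frac{2d-4}{d^2}+\frac{4}{d^2(d+1)}=\frac{(2d-4)(d+1)+4}{d^2(d+1)}=\frac{2d(d-1)}{d^2(d+1)}=\frac{2(d-1)}{d(d+1)},
\end{equation*}
which is strictly smaller than $\frac{2(d-1)}{d^2}$ because $d+1>d$. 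This contradicts \eqref{eq:repeat-key-opti-d}, and hence forces $|A|\ge 2d-3$.

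The only real subtlety is the sign check on $\frac{1}{\deg w}-\frac{d-1}{d^2}$ for $\deg w\ge d+2$; once that is in hand, everything else is elementary arithmetic together with the monotonicity observation used to reduce to the extremal choice $|A|=2d-4$, $|B|=4$. I therefore expect no serious obstacle, as this lemma is the natural continuation of Lemma~\ref{lemma16} along the same template.
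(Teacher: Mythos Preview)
Your proof is correct and follows essentially the same template as the paper's proof: assume $|A|\le 2d-4$, drop the non-positive contributions from vertices of degree $\ge d+2$, and bound the remaining sum to contradict \eqref{eq:repeat-key-opti-d}. The only cosmetic difference is that you use the combined constraint $|A|+|B|\le 2d$ to take $|B|\le 4$, whereas the paper uses the looser bound $|B|\le 2d$ and then compares $\frac{2}{d(d+1)}<\frac{2}{d^2}$; both routes yield the required strict inequality.
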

\begin{proof}
If not, then \begin{align*}
&\sum_{w\in \mathcal{N}(u)\triangle \mathcal{N}(v)}\left(\frac{1}{\deg w}-\frac{d-1}{d^2} \right)\\
&\le\sum_{\substack{w\in \mathcal{N}(u)\triangle \mathcal{N}(v)\\ \deg w=d}}\left(\frac{1}{\deg w}-\frac{d-1}{d^2} \right)+\sum_{\substack{w\in \mathcal{N}(u)\triangle \mathcal{N}(v)\\ \deg w=d+1}}\left(\frac{1}{\deg w}-\frac{d-1}{d^2} \right)\\
&< (2d-4) \left(\frac{1}{d}-\frac{d-1}{d^2}\right)+ 2d \left(\frac{1}{d+1}-\frac{d-1}{d^2}\right)\\
&=\frac{2d-4}{d^2}+\frac{2}{d(d+1)}<\frac{2d-4}{d^2}+\frac{2}{d^2}
=2\cdot \frac{d-1}{d^2},
\end{align*}
which contradicts \eqref{eq:repeat-key-opti-d}.
\end{proof}
\begin{lemma}\label{lemma18}
Let $\Gamma$ be a connected graph on $N\geq 3$ vertices with smallest degree $d\geq 3$ and $\varepsilon>\frac{\sqrt{d-1}}{d}$. Then, there exist $u$ and $v$ with $\mathcal{N}(u)\cap \mathcal{N}(v)\ne\emptyset$ and $\deg u,\deg v\in\{d,d+1\}$.

\end{lemma}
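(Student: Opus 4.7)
The plan is to exploit the fact that any two neighbors of a common vertex automatically share that vertex in their neighborhoods, so that the hypothesis of Lemma~\ref{lemma16} is satisfied essentially for free. First, I would pick any vertex $x$ with $\deg x = d$ (such a vertex exists by the definition of $d$). Since $d\geq 3$, the vertex $x$ has at least two distinct neighbors, say $y_1$ and $y_2$. Because $x\in\mathcal{N}(y_1)\cap\mathcal{N}(y_2)$, this intersection is nonempty, so Lemma~\ref{lemma16} applies to the pair $(y_1,y_2)$.

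Next, I would apply Lemma~\ref{lemma16} to obtain a set of at least $2d-1\geq 5$ vertices of degree in $\{d,d+1\}$ inside $\mathcal{N}(y_1)\triangle\mathcal{N}(y_2)$, and pick any vertex $w$ from this set. Being in the symmetric difference, $w$ is adjacent to exactly one of $y_1,y_2$; say $w\sim y_1$ without loss of generality. Moreover $w\neq x$, since $x\in\mathcal{N}(y_1)\cap\mathcal{N}(y_2)$ lies in the intersection, not in the symmetric difference. The pair $(u,v):=(x,w)$ then does the job: both $x$ and $w$ have degree in $\{d,d+1\}$, and they share $y_1$ as a common neighbor because $y_1\sim x$ and $y_1\sim w$.

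I do not expect any serious obstacle once the ``two-hop'' idea is spotted. The only items to verify are distinctness: that $x$ has two different neighbors $y_1\neq y_2$ (guaranteed by $d\geq 2$, hence certainly by $d\geq 3$) and that $w\neq x$ (automatic from the symmetric-difference condition). Notably, the argument uses only Lemma~\ref{lemma16}; the sharper Lemma~\ref{lemma17} appears to be reserved for the subsequent steps in the proof of Theorem~\ref{thm:d3}, where finer degree control becomes necessary.
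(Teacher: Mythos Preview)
Your argument is correct. Both your proof and the paper's hinge on the same mechanism: apply Lemma~\ref{lemma16} to a pair of vertices sharing a common neighbor, then extract from the resulting symmetric difference vertices of degree in $\{d,d+1\}$ that themselves share a common neighbor. The difference is in how the two desired vertices are obtained. The paper starts from an arbitrary path $u'\sim w'\sim v'$, applies Lemma~\ref{lemma16} to $(u',v')$, and uses pigeonhole to find at least $d\geq 2$ vertices of degree in $\{d,d+1\}$ on one side of $\mathcal{N}(u')\triangle\mathcal{N}(v')$; any two of these share $u'$ (or $v'$) as a common neighbor. You instead fix one of the two vertices in advance as a minimum-degree vertex $x$, then only need to pull a single vertex $w$ out of the symmetric difference. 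Your route is marginally more economical (no pigeonhole needed), while the paper's is slightly more self-contained in that it does not single out a minimum-degree vertex. Either way the content is the same, and your remark that Lemma~\ref{lemma17} is not needed here is accurate.
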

\begin{proof}
Fix three vertices $u'\sim w'\sim v'$. By Lemma \ref{lemma16}, we either have $$|\{w\in \mathcal{N}(u')\setminus \mathcal{N}(v'):\deg w\in\{d,d+1\}\}|\ge d$$ or 
$$|\{w\in \mathcal{N}(v')\setminus \mathcal{N}(u'):\deg w\in\{d,d+1\}\}|\ge d.$$ 
Without loss of generality, we may assume that there are two vertices $u$ and $v$ in $\mathcal{N}(u')\setminus \mathcal{N}(v')$ with $\deg u,\deg v\in\{d,d+1\}$. This implies that $\mathcal{N}(u)\cap \mathcal{N}(v)\ne\emptyset$, hence $u$ and $v$ satisfy the claim.
\end{proof}
\begin{lemma}\label{lemma19}
Let $\Gamma$ be a connected graph on $N\geq 3$ vertices with smallest degree $d\geq 3$ and $\varepsilon>\frac{\sqrt{d-1}}{d}$. Then, there exist $u$ and $v$ with $\mathcal{N}(u)\cap \mathcal{N}(v)\ne\emptyset$ and $\deg u=\deg v=d$. 
\end{lemma}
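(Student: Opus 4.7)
The plan is to upgrade the conclusion of Lemma~\ref{lemma18} from ``$\deg u, \deg v \in \{d,d+1\}$'' to ``$\deg u = \deg v = d$'' by combining Lemma~\ref{lemma17} with a simple pigeonhole step. First I would take the pair $u', v'$ produced by Lemma~\ref{lemma18}, so that $\mathcal{N}(u') \cap \mathcal{N}(v') \neq \emptyset$ and $\deg u', \deg v' \in \{d, d+1\}$. The symmetric difference then satisfies
$$|\mathcal{N}(u')\triangle\mathcal{N}(v')| = \deg u' + \deg v' - 2|\mathcal{N}(u')\cap\mathcal{N}(v')| \le 2(d+1)-2 = 2d,$$
so in particular the numerical hypothesis
$|\{w\in \mathcal{N}(u')\triangle\mathcal{N}(v'): \deg w\in\{d,d+1\}\}| \le 2d$ of Lemma~\ref{lemma17} is automatic.

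Next I would invoke Lemma~\ref{lemma17} to get at least $2d-3$ vertices in $\mathcal{N}(u')\triangle\mathcal{N}(v')$ whose degree is exactly $d$. These vertices split between the two halves $\mathcal{N}(u')\setminus\mathcal{N}(v')$ and $\mathcal{N}(v')\setminus\mathcal{N}(u')$, so by pigeonhole at least $\lceil (2d-3)/2 \rceil$ of them lie in a single half. Because $d \ge 3$, this bound is at least $2$, so without loss of generality I can pick two distinct vertices $u, v \in \mathcal{N}(u')\setminus\mathcal{N}(v')$ with $\deg u = \deg v = d$. Since both are adjacent to $u'$, we get $u' \in \mathcal{N}(u) \cap \mathcal{N}(v)$, which is the desired conclusion.

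I do not anticipate any real obstacle: all of the analytic content has been packaged into the two preceding lemmas, and what remains is an elementary pigeonhole on an easily-verified degree sum. The only sanity checks are that the two selected vertices are genuinely distinct, which is guaranteed because we extract them from a set of size at least $2$, and that the hypothesis $d \ge 3$ is used in an essential way, which it is: exactly this hypothesis makes $\lceil (2d-3)/2 \rceil \ge 2$ and prevents the pigeonhole from being vacuous.
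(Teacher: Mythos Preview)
Your proposal is correct and follows essentially the same route as the paper: start from the pair $u',v'$ supplied by Lemma~\ref{lemma18}, bound $|\mathcal{N}(u')\triangle\mathcal{N}(v')|\le 2d$, apply Lemma~\ref{lemma17} to obtain at least $2d-3\ge 3$ vertices of degree exactly $d$ in the symmetric difference, and then pigeonhole two of them into the same half. The paper phrases the pigeonhole as ``$2d-3\ge 3$'' rather than via $\lceil(2d-3)/2\rceil$, but the content is identical.
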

\begin{proof}
Fix $u'$ and $v'$ that satisfy Lemma \ref{lemma18}. Then, $$|\mathcal{N}(u')\triangle \mathcal{N}(v')|\le \deg u'+\deg v'-2\le 2d.$$ By Lemma \ref{lemma17}, this implies that $$|\{w\in \mathcal{N}(u')\triangle \mathcal{N}(v'):\deg w=d\}|\ge 2d-3\ge3.$$ Hence, without loss of generality, we can assume that there are two vertices $u$ and $v$ in $\mathcal{N}(u')\setminus \mathcal{N}(v')$ with $\deg u=\deg v=d$. This proves the claim.
\end{proof}

We can now prove Theorem \ref{thm:d3}.

\begin{proof}[Proof of Theorem \ref{thm:d3}]
We first observe that the second inequality follows from the fact that the sequence
\begin{equation*}
    \left\{\frac{\sqrt{d-1}}{d}\right\}_{d\ge 2}=\left\{ \frac12,\frac{\sqrt{2}}{3},\frac{\sqrt{3}}{4},\frac25,\ldots \right\}
\end{equation*}
 is strictly decreasing. Hence, it is left to show that $\varepsilon\leq\frac{\sqrt{d-1}}{d}$. Assume, by contradiction, that $\varepsilon>\frac{\sqrt{d-1}}{d}$. By Lemma \ref{lemma19}, there exist  $u$ and $v$ such that $\mathcal{N}(u)\cap \mathcal{N}(v)\ne\emptyset$ and  $\deg u=\deg v=d$. This implies that $| \mathcal{N}(u)\triangle \mathcal{N}(v)|\le 2d-2$, but together with Lemma \ref{lemma16}, this brings to a contradiction.
\end{proof}

\subsubsection{The case $d=2$}\label{section:d2}
Here we prove the following
\begin{theo}\label{thm:d2}
For any connected graph $\Gamma$ on $N\geq 3$ vertices with smallest degree $d=2$,
   \begin{equation*}
        \varepsilon\leq\frac{1}{2},
    \end{equation*}with equality if and only if $\Gamma$ is either a petal graph or a book graph.
\end{theo}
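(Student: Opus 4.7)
The inequality $\varepsilon\le\tfrac12$ is immediate from Theorem \ref{thm:part1}, and the ``if'' direction of the equality characterization has already been verified in Section \ref{sec:main}, where explicit eigenfunctions show that both petal and book graphs attain $\varepsilon=\tfrac12$. The substantive task is the ``only if'' direction: assuming $\Gamma$ is connected on $N\ge 3$ vertices with smallest degree $d=2$ and $\varepsilon=\tfrac12$, we must deduce that $\Gamma$ is a petal graph (when $N$ is odd) or a book graph (when $N$ is even).

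The plan is to pick a vertex $v$ with $\deg v=2$, set $\mathcal{N}(v)=\{a,b\}$, and split into two cases according to whether $a\sim b$ (the petal-like case) or $a\not\sim b$ (the book-like case). A useful first observation comes from applying Lemma \ref{lemma:first} to $f=\mathbf{1}_v$: one gets $\tfrac{1}{\deg a}+\tfrac{1}{\deg b}\ge\tfrac12$, sharply restricting the possible degrees of $a$ and $b$. Combined with the equality version \eqref{eq:repeat-key-opti} applied to pairs sharing a common neighbor, this constraint drives the global structure.

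In the petal-like case, $\{v,a,b\}$ is a triangle and $v\in\mathcal{N}(a)\cap\mathcal{N}(b)$. I would either conclude directly that $\Gamma=K_3$ (if $N=3$), or identify one of $a,b$, say $a$, as the hub $x$ of a petal: by \eqref{eq:repeat-key-opti} applied to $(a,b)$, the symmetric difference $\mathcal{N}(a)\triangle\mathcal{N}(b)$ must consist of low-degree vertices; iterating this across the neighbors of $a$, one finds that each of them is itself paired into a triangle with $a$ and another degree-$2$ vertex, yielding the $m$-petal graph on $N=2m+1$ vertices. In the book-like case, test functions of the form $\mathbf{1}_a\pm\mathbf{1}_b$ inserted into \eqref{eq:important-equa} should identify $a,b$ as the two spines $x,y$; showing that $\Gamma$ is bipartite (and hence triangle-free) then forces every other vertex to lie on a unique $4$-cycle $(x,v_i,w_i,y)$, giving the book graph.

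The main obstacle is that, unlike in Section \ref{section:upper}, only weak inequalities are available in \eqref{eq:important-equa} and \eqref{eq:repeat-key-opti}, so local arguments do not immediately generate contradictions; equality cases must be tracked carefully and the eigenfunction structure of the witnesses exploited. I expect the case analysis to closely mirror that of Lemmas \ref{lemma:easycase}--\ref{lemma:step2}, with each potential deviation from the petal or book skeleton ruled out by constructing a small-support test function $f=\sum_i c_i\mathbf{1}_{v_i}$ that refutes \eqref{eq:important-equa} for that configuration.
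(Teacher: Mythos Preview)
Your proposal identifies the right difficulty (only weak inequalities) but the plan you outline does not close it. The case split on whether $a\sim b$ is natural, but the step ``iterating this across the neighbors of $a$, one finds that each of them is itself paired into a triangle'' is where the argument breaks down: applying \eqref{eq:repeat-key-opti} to pairs of neighbors of the putative hub only tells you that \emph{some} vertices in each symmetric difference have small degree, not that \emph{all} of the hub's neighbors come in triangle-pairs. With equality rather than strict inequality, a single small-support test function never yields a contradiction, and there is no evident way to chain such local constraints into a global rigidity statement. Likewise, in the book case you propose to ``show that $\Gamma$ is bipartite,'' but $\varepsilon=\tfrac12$ does not by itself give $\lambda_N=2$; this would have to come out of the structural analysis, not precede it.

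The paper's route is quite different. It first builds up structural lemmas (Proposition~\ref{prop:A1}, Lemma~\ref{lemmaA4}, Proposition~\ref{prop:A2}, Lemma~\ref{lemmaA5}) establishing that every degree-$2$ vertex has a degree-$2$ neighbor and that, outside two small explicit families, no vertex of degree~$3$ exists at all. Only then does it pick a high-degree vertex $u$ with $\mathcal{N}_2(u)\ne\emptyset$ and construct a single \emph{global} test function supported on $u$, all of $\mathcal{N}_2(u)$, and one or two further layers of degree-$2$ vertices; plugging this into \eqref{eq:important-equa} yields a numerical inequality in the parameters $k,l,q,r$ that forces $q=r=0$ and $\min(k,l)=0$, pinning down the petal or book structure in one stroke. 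The key idea your sketch is missing is precisely this large-support test function: it is what converts the accumulation of weak local constraints into a decisive global one.
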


Also in this section, we first prove several preliminary results. We start by showing that, if $\Gamma=C_N$ is the cycle graph on $N\geq 3$ vertices, then $\varepsilon=\frac{1}{2}$ if and only if either $N=3$ (in which case $\Gamma=C_3$ is the $1$--petal graph) or $N=6$ (in which case $\Gamma=C_6$ is the $2$--book graph).

\begin{prop}\label{prop:cycle}
If $\Gamma=C_N$ is the cycle graph on $N\geq 3$ vertices, then $\varepsilon=\frac{1}{2}$ if and only if $N\in \{3,6\}$.
\end{prop}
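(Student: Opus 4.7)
The plan is to exploit the fact that the cycle $C_N$ is vertex-transitive with a completely explicit spectrum. Since $C_N$ is $2$-regular, $\Delta = \id - \tfrac12 A$, and the eigenvalues of the adjacency matrix of $C_N$ are $2\cos(2\pi k/N)$ for $k = 0, 1, \ldots, N-1$. Therefore the Laplacian eigenvalues are $\lambda_k = 1 - \cos(2\pi k/N)$, and consequently
\[
\varepsilon \;=\; \min_{0\le k\le N-1}\bigl|\cos(2\pi k/N)\bigr|.
\]

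The condition $\varepsilon = \tfrac12$ then reduces to an elementary question about fractional parts. Indeed, $|\cos\theta| < \tfrac12$ precisely when $\theta \bmod \pi \in (\pi/3,\, 2\pi/3)$, so $\varepsilon < \tfrac12$ iff some integer $k \in \{0,1,\ldots,N-1\}$ satisfies $k/N \bmod 1 \in (1/6,\,1/3) \cup (2/3,\,5/6)$, equivalently iff the open interval $(N/6,\,N/3)$ contains an integer.

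For the ``only if'' direction I would dispatch all large $N$ in one stroke: when $N \ge 7$ the interval $(N/6, N/3)$ has length $N/6 > 1$ and therefore must contain an integer $k$, producing a $\lambda_k$ with $|1-\lambda_k| < \tfrac12$. The remaining small cases are handled by direct evaluation: $N=4$ gives $\cos(\pi/2) = 0$ and $N=5$ gives $|\cos(2\pi/5)| = (\sqrt{5}-1)/4 < \tfrac12$, so in both cases $\varepsilon < \tfrac12$. For the ``if'' direction, direct substitution yields the spectra $\{0,\,3/2,\,3/2\}$ for $N = 3$ and $\{0,\,1/2,\,1/2,\,3/2,\,3/2,\,2\}$ for $N = 6$, giving $\varepsilon = \tfrac12$ in both.

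I do not anticipate any real obstacle: once the cycle's Laplacian spectrum is written down, the argument collapses to an integer-counting statement in a short real interval. The only mild subtlety is that the interval $(N/6, N/3)$ is open, so one needs length strictly greater than $1$ (not $\geq 1$) to guarantee an integer inside; this explains the threshold $N \ge 7$ in the generic case, and is exactly why the two borderline values $N = 4$ and $N = 5$ have to be inspected separately by hand.
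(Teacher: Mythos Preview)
Your proof is correct and follows essentially the same route as the paper: both use the explicit spectrum $\lambda_k = 1 - \cos(2\pi k/N)$, verify $N\in\{3,4,5,6\}$ by hand, and for $N\ge 7$ produce an integer $k$ with $2k/N \in (1/3,2/3)$. The only cosmetic difference is that the paper explicitly picks $k\approx N/4$ and checks the bounds, whereas you argue more cleanly via the length of the interval $(N/6,N/3)$.
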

\begin{proof}As shown in \cite{Chung97}, for the cycle graph $C_N$ on $N\geq 3$ vertices, the eigenvalues are $1-\cos{(2\pi k/N)}$, for $k=0,\ldots,N-1$. We already know that $\varepsilon=\frac{1}{2}$ if $N\in \{3,6\}$ since, in this case, $\Gamma$ is either the $1$--petal graph or the $2$--book graph. Therefore, it is left to show that $\varepsilon<\frac{1}{2}$ for $N\geq 4$, $N\neq 6$. We consider three cases.
\begin{enumerate}
    \item If $N=4$, then $1$ is an eigenvalue, therefore $\varepsilon=0$.
    \item If $N=5$, by letting $k=1$ we can see that $1-\cos{(2\pi /5)}$ is an eigenvalue that has distance $\cos{(2\pi /5)}\approx 0.3$ from $1$. Hence, $\varepsilon<1/2$.
    \item Let $N> 6$ and let 
\begin{equation*}
    k\in \left\{\frac{N-2}{4},\frac{N-1}{4},\frac{N}{4}, \frac{N+1}{4}\right\}.
\end{equation*}Then,
\begin{equation*}
     \frac{2 k}{N}\leq \frac{2}{N}\cdot \frac{N+1}{4}=\frac{N+1}{2N}< \frac{2}{3},
\end{equation*}since $N> 3$, and 
\begin{equation*}
    \frac{2 k}{N}\geq \frac{2}{N}\cdot \frac{N-2}{4}=\frac{N-2}{2N}> \frac{1}{3},
\end{equation*}
since $N> 6$. Hence, 
\begin{equation*}
  \frac{2 k}{N}\in \left(\frac{1}{3},\frac{2}{3}\right),
\end{equation*}implying that 
\begin{equation*}
   \left|\cos{\frac{2\pi k}{N}}\right|<\frac{1}{2},
\end{equation*}
therefore $\varepsilon< 1/2$. 
\end{enumerate}
\end{proof}

\begin{prop}\label{prop:A1}
Let $\Gamma$ be a connected graph on $N\geq 3$ vertices with smallest degree $d=2$ and  $\varepsilon=\frac{1}{2}$. If three vertices are such that $u\sim v\sim w$ and $\deg u=\deg v=\deg w=2$, then $\Gamma=C_3$ or $C_6$.
\end{prop}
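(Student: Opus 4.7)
Since $\deg v=2$ we have $\mathcal{N}(v)=\{u,w\}$; let $u_1$ be the neighbour of $u$ other than $v$, and $w_1$ the neighbour of $w$ other than $v$. The strategy is to show that the existence of three consecutive degree-$2$ vertices forces a global propagation: every vertex of $\Gamma$ must then have degree $2$, so $\Gamma=C_N$ for some $N$, and Proposition \ref{prop:cycle} pins down the length.

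Two small coincidences are disposed of directly. If $u_1=w$, then $\{u,v,w\}$ is a triangle in which each vertex already has both its edges, and connectedness of $\Gamma$ gives $\Gamma=C_3$. If $u_1=w_1$, then $u$ and $w$ share the neighbourhood $\{v,u_1\}$; the function equal to $+1$ at $u$, $-1$ at $w$, $0$ elsewhere is then an eigenfunction for $\lambda=1$, forcing $\varepsilon=0$ and contradicting $\varepsilon=\tfrac12$ (equivalently, \eqref{eq:repeat-key-opti} applied to $(u,w)$ fails since the symmetric difference would be empty). In the remaining case $u,v,w,u_1,w_1$ are five distinct vertices. Applying \eqref{eq:repeat-key-opti} to the pair $(u,w)$, whose common neighbourhood is $\{v\}$ and whose symmetric difference is $\{u_1,w_1\}$, yields
\[
\left(\frac{1}{\deg u_1}-\frac{1}{4}\right)+\left(\frac{1}{\deg w_1}-\frac{1}{4}\right)\geq\frac{1}{2},
\]
that is $\frac{1}{\deg u_1}+\frac{1}{\deg w_1}\geq 1$; since $d=2$, each reciprocal is at most $\frac12$, so equality forces $\deg u_1=\deg w_1=2$.

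This is the engine of the argument: whenever we have three consecutive degree-$2$ vertices $x\sim y\sim z$ along the extending sequence, the same application of \eqref{eq:repeat-key-opti} to the endpoints $(x,z)$ (with shared neighbour $y$) shows that the vertices adjacent to $x$ and $z$ outside the triple also have degree $2$, provided they are distinct from those already in the picture. The excluded coincidences either reduce to the duplicate-vertex situation already handled, or close the walk into a cycle whose length is then pinned down by Proposition \ref{prop:cycle}. Iterating from both sides of the original path, starting with $u_1\sim u\sim v$ to produce $u_2$ and $w_1\sim w\sim v$ to produce $w_2$, and continuing, the set of vertices produced is closed under taking neighbours and consists entirely of degree-$2$ vertices.

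By connectedness this set equals $V$, so $\Gamma$ is $2$-regular and therefore equals some cycle $C_N$ with $N\geq 5$. Proposition \ref{prop:cycle} then forces $N=6$, so $\Gamma=C_6$. The main technical obstacle is the bookkeeping in the iteration step: one has to check systematically that each coincidence of a freshly produced vertex with an already present one either yields a short cycle (which by Proposition \ref{prop:cycle} can have length only $3$ or $6$, consistent with the conclusion) or a pair of duplicate vertices (incompatible with $\varepsilon=\tfrac12$), so that the inductive propagation of the degree-$2$ property really closes up into a single cycle.
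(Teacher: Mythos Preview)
Your proof is correct and follows essentially the same approach as the paper's: use \eqref{eq:repeat-key-opti} on pairs of degree-$2$ vertices sharing a neighbour to propagate the degree-$2$ condition along the path until it closes into a cycle, then invoke Proposition~\ref{prop:cycle}. Your treatment of the coincidence $u_1=w_1$ (via duplicate vertices / empty symmetric difference) is in fact slightly cleaner than the paper's, which asserts $\Gamma=C_4$ there without having established $\deg u_1=2$.
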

\begin{proof}
If $u\sim w$, then clearly $\Gamma=C_3$. If  $u\not\sim w$, then \eqref{eq:repeat-key-opti} applied to $ \mathcal{N}(u)\triangle\mathcal{N}(w)$ implies that the vertices $u_1\in  \mathcal{N}(u)\setminus\{v\}$ and $w_1\in \mathcal{N}(w)\setminus\{v\}$ are such that $\deg u_1=\deg w_1=2$. If $u_1=w_1$, then clearly $\Gamma=C_4$. Otherwise, we can repeat the process and obtain any cycle graph $C_N$. By Proposition \ref{prop:cycle}, the claim follows.
\end{proof}

\begin{lemma}\label{lemmaA3}Let $\Gamma$ be a connected graph on $N\geq 3$ vertices with smallest degree $d= 2$ and $\varepsilon=\frac{1}{2}$. 
Then, there exist two adjacent vertices $u\sim v$ such that $\deg u=\deg v=2$.
\end{lemma}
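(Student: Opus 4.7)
The plan is to argue by contradiction. Assume the set $V_2 := \{w \in V : \deg w = 2\}$ is an independent set of $\Gamma$. Since $d=2$, $V_2$ is nonempty, so fix $v \in V_2$ with neighbors $a, b$; by assumption both $\deg a, \deg b \geq 3$. The strategy is to combine the variational formula \eqref{eq:important-equa} with the ``key'' inequality \eqref{eq:repeat-key-opti} applied several times to suitable vertex pairs, forcing either a test function that violates $\varepsilon = \frac{1}{2}$ or a pair of duplicate vertices (which would produce the eigenvalue $1$ and contradict $\varepsilon = \frac{1}{2}$).

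As a preliminary, I would extract two constraints. Plugging $f = \mathbf{1}_v$ into \eqref{eq:important-equa} gives $\frac{1}{\deg a} + \frac{1}{\deg b} \geq \frac{1}{2}$, confining $\{\deg a, \deg b\}$ to a short list. Applying \eqref{eq:repeat-key-opti} to the pair $(a,b)$ with common neighbor $v$ yields
\begin{equation*}
\sum_{w \in \mathcal{N}(a) \triangle \mathcal{N}(b)} \Bigl(\frac{1}{\deg w} - \frac{1}{4}\Bigr) \geq \frac{1}{2},
\end{equation*}
in which each summand is at most $\frac{1}{4}$ (attained iff $\deg w = 2$) and at most $\frac{1}{12}$ when $\deg w \geq 3$. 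I would then split into two cases according to whether $\mathcal{N}(a) \triangle \mathcal{N}(b)$ contains a degree-$2$ vertex.

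In the first case, let $w$ be such a degree-$2$ vertex; WLOG $w \sim a$, $w \not\sim b$, and let $w'$ be the other neighbor of $w$. By $V_2$-independence $w' \in V_{\geq 3}$ and $w' \notin \{a,v\}$. If $w' = b$ then $\mathcal{N}(v) = \mathcal{N}(w) = \{a,b\}$, so $v$ and $w$ are duplicate vertices, producing an eigenfunction for the eigenvalue $1$ and contradicting $\varepsilon = \frac{1}{2}$. If $w' \neq b$, I would test $f = \mathbf{1}_v - \mathbf{1}_w$ in \eqref{eq:important-equa}: a direct vertex-by-vertex bookkeeping shows that the contributions at $v, w, a$ all cancel, leaving numerator $\frac{1}{\deg b} + \frac{1}{\deg w'}$ and denominator $\deg v + \deg w = 4$, so the required $\frac{1}{\deg b} + \frac{1}{\deg w'} \geq 1$ contradicts $\deg b, \deg w' \geq 3$.

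In the second case $\mathcal{N}(a) \triangle \mathcal{N}(b)$ has no degree-$2$ vertex; each summand in the preliminary bound is then at most $\frac{1}{12}$, so there must be at least six degree-$3$ vertices in the symmetric difference. Picking one, $u$, with WLOG $u \sim a$, $u \not\sim b$, and $u \neq b$, and writing $\mathcal{N}(u) = \{a, u_1, u_2\}$, I apply \eqref{eq:repeat-key-opti} to the pair $(v, u)$ (common neighbor $a$); after using $\deg b \geq 3$ this forces $\frac{1}{\deg u_1} + \frac{1}{\deg u_2} \geq \frac{11}{12}$, and a quick case check gives $\deg u_1 = \deg u_2 = 2$. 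The Case II hypothesis rules out $u_i \sim a$ (otherwise $u_i$ would be a degree-$2$ element of $\mathcal{N}(a) \triangle \mathcal{N}(b)$), so their respective other neighbors $u_1', u_2'$ lie in $V_{\geq 3}$. A third application of \eqref{eq:repeat-key-opti} to $(u_1, u_2)$ with common neighbor $u$ closes the argument: either $u_1' = u_2'$, making $u_1, u_2$ duplicates and producing the forbidden eigenvalue $1$, or $u_1' \neq u_2'$ with $\frac{1}{\deg u_1'} + \frac{1}{\deg u_2'} \geq 1$, again impossible. The main obstacle I foresee is the bookkeeping in Case II: I must verify that the three successive applications of \eqref{eq:repeat-key-opti} involve genuinely distinct pairs (handling in particular the sub-case $u = b$ that can arise when $a \sim b$, and degenerate possibilities such as $u_1' \in \{u, u_2\}$ or several common neighbors of $u_1, u_2$), all of which should either collapse to the duplicate-vertex argument or produce a sharper contradiction.
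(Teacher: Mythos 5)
Your argument is correct, and the flagged bookkeeping in Case II does go through: since there are at least six degree-$3$ vertices in $\mathcal{N}(a)\triangle\mathcal{N}(b)$ and at most two of them can be $a$ or $b$, you may always choose $u\notin\{a,b\}$; the possibility $u_1\sim u_2$ is excluded outright by your standing hypothesis that the degree-$2$ vertices form an independent set (or rather, it would immediately finish the lemma); $u_1'=u_2'$ makes $u_1,u_2$ duplicate vertices, forcing the eigenvalue $1$ (equivalently, it violates the nonemptiness of $\mathcal{N}(u_1)\triangle\mathcal{N}(u_2)$ guaranteed by \eqref{eq:repeat-key-opti}); and $u_i'\notin\{u,v,a,b\}$ follows from the Case II hypothesis together with $\deg u_i=2$. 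Your route differs from the paper's mainly in organization: the paper does not posit independence of the degree-$2$ set, but instead runs a direct structural chase from an arbitrary degree-$2$ vertex $v$ — apply \eqref{eq:repeat-key-opti} to its two neighbors to find a vertex $w_1$ of degree $2$ or $3$, then to the pair $(v,w_1)$, and it must therefore also handle the intermediate case $\deg w_1=2$, eventually landing on exactly your final configuration (a degree-$3$ vertex with two degree-$2$ neighbors $x_1,x_2$) and closing with \eqref{eq:repeat-key-opti} applied to $(x_1,x_2)$. Your global contradiction hypothesis buys a cleaner case split (note that your Case I test function $f=\mathbf{1}_v-\mathbf{1}_w$ is precisely \eqref{eq:repeat-key-opti} for the pair $(v,w)$ with common neighbor $a$) and lets you quote $\deg\ge 3$ for all neighbors of degree-$2$ vertices for free, at the cost of the extra preliminary count of six degree-$3$ vertices; the paper's chase is slightly shorter but has to track the degree possibilities $\{2,3\}$ by hand. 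Both proofs rest on the same engine, namely repeated application of the distance-two inequality \eqref{eq:repeat-key-opti} plus the duplicate-vertex observation, so the two arguments are of comparable depth.
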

\begin{proof}We illustrate this proof in Figure \ref{fig:A3}.\newline
Fix $u\sim v\sim w$ such that $\deg v=2$. Then, by \eqref{eq:repeat-key-opti}, there exists $w_1\in \mathcal{N}(u)\triangle \mathcal{N}(w)$ with $\deg w_1\in\{2,3\}$. Without loss of generality, we assume that $w_1\sim w$. Again by \eqref{eq:repeat-key-opti}, there are at least two 
vertices 
in $\mathcal{N}(v)\triangle \mathcal{N}(w_1)$ of degree $2$. If $\deg u=2$, then we have done. If $\deg u\ge 3$, then $\deg w_1=3$, $\deg u\le 4$ and $\deg x=2$ for any $x\in \mathcal{N}(w_1)\setminus\{w\}$. Now, let $x_1,x_2\in\mathcal{N}(w_1)\setminus\{w\}$. By applying  \eqref{eq:repeat-key-opti}  to $x_1$ and $x_2$, we infer that there exists a  vertex adjacent to $x_1$ of degree 2. Since also $\deg x_1=2$, this proves the claim.
\end{proof}

\begin{figure}[h]
    \centering
    \begin{tikzpicture}[scale=1.5]
\draw (0,0)--(1,0)--(2,0)--(3,0);
\draw[dashed] (0.2,0.9)--(0,0)--(-1,0);\draw[dashed] (1.8,0.8)--(2,0);
\draw (3.2,0.9)--(3,0)--(4,0);
\node (u) at  (0,0) {$\bullet$};
\node (v) at  (1,0) {$\bullet$};
\node (w) at  (2,0) {$\bullet$};
\node (w1) at  (3,0) {$\bullet$};
\node (x1) at  (4,0) {$\bullet$};
\node (x2) at  (3.2,0.9) {$\bullet$};
\node (u) at  (-0.1,0.2) {$u$};
\node (v) at  (0.8,0.2) {$v$};
\node (w) at  (2,0.2) {$w$};
\node (w1) at  (2.9,0.2) {$w_1$};\node (x1) at  (3.9,0.2) {$x_1$};\node (x2) at  (3,0.8) {$x_2$};
\end{tikzpicture}
    \caption{The vertices in the proof of Lemma \ref{lemmaA3}}
    \label{fig:A3}
\end{figure}
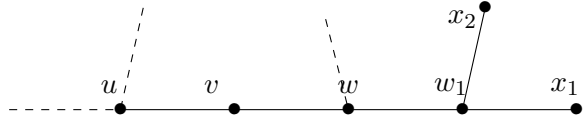

\begin{lemma}\label{lemmaA4}
Let $\Gamma$ be a connected graph on $N\geq 3$ vertices with smallest degree $d= 2$ and $\varepsilon=\frac{1}{2}$. For any vertex $v$ of degree 2, there exists $u\sim v$ with $\deg u=2$.
\end{lemma}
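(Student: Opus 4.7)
The plan is to argue by contradiction. Suppose $v$ is a degree-$2$ vertex whose two neighbors $u_1,u_2$ both satisfy $d_i:=\deg u_i\ge 3$; I will show that this is incompatible with $\varepsilon=\tfrac12$. The first step is to plug the indicator $f=\mathbf{1}_v$ into \eqref{eq:important-equa}: this immediately yields $\tfrac{1}{d_1}+\tfrac{1}{d_2}\ge \tfrac12$, which together with $d_i\ge 3$ confines $(d_1,d_2)$, up to reordering, to the finite list $\{(3,3),(3,4),(3,5),(3,6),(4,4)\}$. Hence the problem reduces to five numerical patterns.

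The second step is to exploit the fact that $u_1$ and $u_2$ share $v$ as a common neighbor, so that \eqref{eq:repeat-key-opti} applies and gives
\[
\sum_{w\in\mathcal{N}(u_1)\triangle\mathcal{N}(u_2)}\Bigl(\tfrac{1}{\deg w}-\tfrac14\Bigr)\ \ge\ \tfrac12\,|\mathcal{N}(u_1)\cap\mathcal{N}(u_2)|\ \ge\ \tfrac12.
\]
Since vertices of degree $\ge 4$ contribute nonpositively to the left-hand side, there must exist $w_1\in\mathcal{N}(u_1)\triangle\mathcal{N}(u_2)$ with $\deg w_1\le 3$; after relabeling I may assume $w_1\sim u_1$. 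In most of the five patterns above, the bound in fact forces several such low-degree vertices or a very constrained distribution of them between $\mathcal{N}(u_1)$ and $\mathcal{N}(u_2)$.

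The third step is a case analysis on $(d_1,d_2)$ and on $\deg w_1$. The strategy in each branch is to iterate \eqref{eq:repeat-key-opti} on newly produced pairs of vertices that share a common neighbor (for instance $(v,w_1)$, which share $u_1$, or $(u_2,w_1)$ when they share $u_1$) to extract further small-degree vertices and propagate a local path through $v$. The branch then closes in one of two ways: either (a) the forced local structure extends to a cycle $C_N$ with $N\notin\{3,6\}$, contradicting Proposition \ref{prop:cycle}; or (b) a tailored test function supported on $v$ together with a small subset of $\mathcal{N}(u_1)\cup\mathcal{N}(u_2)$, with signs chosen to cancel contributions on common neighbors, explicitly violates \eqref{eq:important-equa}. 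Lemma \ref{lemmaA3} and Proposition \ref{prop:A1} are the principal structural tools to be combined with these inequalities.

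The main obstacle is the bookkeeping in the case analysis, especially for the borderline patterns $(3,6)$ and $(4,4)$: there Step 1 alone is already tight, so all the slack needed to force a contradiction must come from iterated use of \eqref{eq:repeat-key-opti} and from the structural lemmas above, and the contradiction is genuinely combinatorial rather than numerical.
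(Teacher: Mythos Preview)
Your Step~1 is correct: plugging $f=\mathbf{1}_v$ into \eqref{eq:important-equa} gives $\tfrac{1}{d_1}+\tfrac{1}{d_2}\ge\tfrac12$, and with $d_i\ge 3$ this does confine $(d_1,d_2)$ to the five pairs you list. The displayed inequality in Step~2 is also correct. But what you have written after that is a plan rather than a proof: the case analysis is not carried out, and you yourself flag $(3,6)$ and $(4,4)$ as unresolved. That is the gap.

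The paper does not split on $(d_1,d_2)$ at all. With $u_1\sim v\sim u_2$ and $\deg u_i\ge 3$, it proves three claims that hold uniformly: (C1) every $x\in\mathcal{N}(u_1)\triangle\mathcal{N}(u_2)$ has $\deg x\ge 3$ (if some such $x$ had degree $2$, then $v$ and $x$ share a common neighbor, $\mathcal{N}(v)\triangle\mathcal{N}(x)$ is a two-element set containing a vertex of degree $\ge 3$, and \eqref{eq:repeat-key-opti} fails); (C2) at least six vertices in the symmetric difference have degree exactly $3$, whence $d_1+d_2\ge 8$; (C3) any degree-$3$ vertex $w_1$ adjacent to $u_2$ has its other two neighbors $x_1,x_2$ of degree $2$ (apply \eqref{eq:repeat-key-opti} to the pair $(v,w_1)$). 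From (C3) one then propagates: either $x_1\sim x_2$ and the test function $f(w_1)=1$, $f(x_1)=-1$ violates \eqref{eq:important-equa}, or further applications of \eqref{eq:repeat-key-opti} produce three consecutive degree-$2$ vertices, whereupon Proposition~\ref{prop:A1} forces $\Gamma$ to be a cycle, impossible since $d_1\ge 3$. The key step your outline is missing is (C1): your Step~2 only extracts \emph{one} vertex $w_1$ with $\deg w_1\le 3$, whereas ruling out degree~$2$ throughout $\mathcal{N}(u_1)\triangle\mathcal{N}(u_2)$ is what makes the argument uniform and delivers the degree-$3$ vertex needed for the endgame in (C3). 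Incidentally, (C2) already disposes of your cases $(3,3)$ and $(3,4)$ in one stroke.
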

\begin{proof}
Suppose the contrary, then there are $u\sim v\sim w$ with $\deg u,\deg w\ge3$ and $\deg v=2$.

\begin{enumerate}[{Claim }1:]
\item For all $x\in \mathcal{N}(u)\triangle \mathcal{N}(w)$, $\deg x\ge 3$.

If not, without loss of generality we can assume that there exists $x\in \mathcal{N}(u)\setminus \mathcal{N}(w)$ with $\deg x=2$. This implies that  $$\mathcal{N}(v)\triangle \mathcal{N}(x)=\{w,y\},$$ for some vertex $y$. By the assumption,  $\deg y\ge2$ and $\deg w\ge 3$. Hence, applying  \eqref{eq:repeat-key-opti} to $\mathcal{N}(v)\triangle \mathcal{N}(x)$ implies  $$\left(\frac{1}{2}-\frac14\right)+\left(\frac{1}{3}-\frac14\right)\ge\left(\frac{1}{\deg y}-\frac14\right)+\left(\frac{1}{\deg w}-\frac14\right)\ge\frac12,$$ which is a contradiction.
\item There are at least six vertices in $ \mathcal{N}(u)\triangle \mathcal{N}(w)$ of degree 3, and thus $\deg u+\deg w\ge 8$. 

If there are at most five vertices in $ \mathcal{N}(u)\triangle \mathcal{N}(w)$ of degree 3, then the other vertices in $ \mathcal{N}(u)\triangle \mathcal{N}(w)$ have  degree at least 4. Applying  \eqref{eq:repeat-key-opti} to $ \mathcal{N}(u)\triangle\mathcal{N}(w)$, we then obtain
$$5 \left(\frac13-\frac14\right)\ge \sum_{x\in \mathcal{N}(u)\triangle \mathcal{N}(w)}\left(\frac{1}{\deg x}-\frac14\right)\geq \frac12,$$ which is a contradiction. 
\item Assume that there exists $w_1\sim w$ with $\deg w_1=3$. Then, $x_1,x_2\in \mathcal{N}(w_1)\setminus\{w\}$ are such that $\deg x_1=\deg x_2=2$.

If not, then without loss of generality we can assume that $\deg x_1\ge 3$. Applying  \eqref{eq:repeat-key-opti} to $ \mathcal{N}(v)\triangle\mathcal{N}(w_1)$, this implies
\begin{align*}
&\frac{5}{12}=\left(\frac13-\frac14\right)+\left(\frac13-\frac14\right)+\left(\frac12-\frac14\right)\\
\ge& \left(\frac{1}{\deg u}-\frac14\right)+\left(\frac{1}{\deg x_1}-\frac14\right)+\left(\frac{1}{\deg x_2}-\frac14\right)\\ =&\sum_{x\in \mathcal{N}(v)\triangle \mathcal{N}(w_1)}\left(\frac{1}{\deg x}-\frac14\right)\geq \frac12,
\end{align*}
which is a contradiction.
\end{enumerate}
  Now, if $x_1\sim x_2$, then by letting $f(x_1):=-1$, $f(w_1):=1$ and $f:=0$ otherwise, \eqref{eq:repeat-key-opti} brings to a contradiction. Hence, $x_1\not\sim x_2$. By applying \eqref{eq:repeat-key-opti} again, we can infer that there exist  $y_1\sim x_1$ and $y_2\sim x_2$ such that $\deg y_1=\deg y_2=2$. 
If $y_1=y_2$, then we reduce to Proposition \ref{prop:A1}. If $y_1\ne y_2$, then a similar reasoning implies $y_1\not\sim y_2$. By applying then \eqref{eq:repeat-key-opti} to $ \mathcal{N}(y_1)\triangle\mathcal{N}(w_1)$, we infer that there exists $z_1\sim y_1$ with $\deg z_1=2$, and therefore we again reduce to Proposition \ref{prop:A1}.  In any case, we obtain a contradiction.
\end{proof}

\begin{prop}\label{prop:A2}Let $\Gamma$ be a connected graph on $N\geq 3$ vertices with smallest degree $d= 2$ and $\varepsilon=\frac{1}{2}$. 
If there exist $u\sim v\sim w$ such that $\deg u=\deg v=2$ and $\deg w=3$, then $\Gamma$ is the book graph on $N=8$ vertices.
\end{prop}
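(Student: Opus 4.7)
The plan is to pin down the local structure near the prescribed triple $u\sim v\sim w$ via \eqref{eq:repeat-key-opti}, and then propagate it outward until the graph is forced to be the $3$-book. Let $u_1$ denote the neighbor of $u$ other than $v$, and $w_1,w_2$ the neighbors of $w$ other than $v$. First I would apply \eqref{eq:repeat-key-opti} to the pair $(u,w)$, which share $v$ as a common neighbor. The case $u_1\in\{w_1,w_2\}$ is immediately ruled out, because then $\mathcal{N}(u)\triangle\mathcal{N}(w)$ would consist of a single vertex of degree at most $4/3$, contradicting $d=2$. Hence $\mathcal{N}(u)\triangle\mathcal{N}(w)=\{u_1,w_1,w_2\}$ and
\[
\frac{1}{\deg u_1}+\frac{1}{\deg w_1}+\frac{1}{\deg w_2}\ge \frac54.
\]

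Next I would determine these three degrees exactly. Since $d=2$, each term is at most $1/2$, so the budget confines $u_1,w_1,w_2$ to degrees in $\{2,3\}$. The key step is to rule out $\deg u_1=2$: if so, the chain $u_1\sim u\sim v$ has all degrees $2$, and by Lemma \ref{lemmaA4} applied at $u_1$ it extends to a run of three consecutive degree-$2$ vertices. Proposition \ref{prop:A1} then forces $\Gamma\in\{C_3,C_6\}$, but neither admits a degree-$3$ vertex, a contradiction. Hence $\deg u_1=3$, and then the remaining budget $1/\deg w_1+1/\deg w_2\ge 11/12$ forces $\deg w_1=\deg w_2=2$.

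Now I would propagate. By Lemma \ref{lemmaA4}, each of $w_1,w_2$ has a degree-$2$ neighbor $w_1',w_2'$ different from $w$. Applying \eqref{eq:repeat-key-opti} to pairs like $(v,u_1)$ (with common neighbor $u$) and $(u,w_i)$ (with common neighbor $v$), supplemented by Lemma \ref{lemma:first} with carefully chosen local test functions, should force $u_1$'s two remaining neighbors to coincide exactly with $w_1'$ and $w_2'$. This produces the explicit book structure with spine $\{w,u_1\}$ of degree $3$ and three pages $(v,u),(w_1,w_1'),(w_2,w_2')$, yielding $N=8$.

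The main obstacle will be this last propagation step: closing the graph up into precisely the $3$-book rather than a graph with extra pages, extra vertices, or tangential crossings. Each alternative closure needs to be excluded by combining \eqref{eq:repeat-key-opti} at the right pairs with the Rayleigh quotient from Lemma \ref{lemma:first}, while keeping a tight account of the degree budget so that the matching between $\{u_1\text{'s remaining neighbors}\}$ and $\{w_1',w_2'\}$ is unambiguous. Organizing this bookkeeping cleanly—rather than any single estimate—is where the real work lies.
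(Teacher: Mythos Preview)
Your overall strategy matches the paper's, but there are two concrete gaps.

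First, you have not handled the case $u\sim w$. If $u\sim w$ then $u_1=w$, and since $w\notin\{w_1,w_2\}$ your exclusion ``$u_1\in\{w_1,w_2\}$'' does not cover it; yet your next line ``$\mathcal{N}(u)\triangle\mathcal{N}(w)=\{u_1,w_1,w_2\}$'' and everything downstream silently assumes $u\not\sim w$. The paper treats $w\sim u$ separately at the very start (building a test function $f$ in \eqref{eq:important-equa} to force a contradiction), and only afterwards proceeds with the configuration you describe.

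Second, and more seriously, your degree-budget claim is wrong. From
\[
\frac{1}{\deg u_1}+\frac{1}{\deg w_1}+\frac{1}{\deg w_2}\ge \frac54
\]
with each term at most $\tfrac12$, the multiset $\{2,2,4\}$ is still admissible (it gives exactly $\tfrac54$); you cannot conclude that all three degrees lie in $\{2,3\}$. After you rule out $\deg u_1=2$ via Proposition~\ref{prop:A1}, you correctly get $\deg w_1=\deg w_2=2$, but only $\deg u_1\in\{3,4\}$ follows, not $\deg u_1=3$. This is precisely the crux: in the paper the analogous central vertex $z$ (which is your $u_1$) is first shown to satisfy $\deg z\le 4$ via a test function with $f(w)=1$, $f(u)=f(x_1)=f(x_2)=-1$, and then the case $\deg z=4$ is eliminated by a further, nontrivial argument combining \eqref{eq:repeat-key-opti}, Lemma~\ref{lemmaA4}, and another tailored test function. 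Your outline jumps over this step, and your ``two remaining neighbours of $u_1$'' formulation would not even parse if $\deg u_1=4$. So the step you flagged as ``where the real work lies'' is exactly the step where your current argument is incomplete, and it does require the kind of case analysis the paper carries out.
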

\begin{proof}
If $w\sim u$, by letting $f(w):=1$, $f(u):=-1$, and $f:=0$ otherwise, \eqref{eq:important-equa} implies that $$\frac12+\frac13+\frac{1}{\deg w_1}\ge\frac14(2+3),$$ therefore the vertex  $w_1\in \mathcal{N}(w)\setminus\{u,v\}$ has degree $2$. Similarly, there exists $x\sim w_1$ with $\deg x=2$. By letting now $f(w):=2$, $f(u):=f(v):=-1$, $f(w_1):=1$, $f(x):=-1$ and $f:=0$ otherwise, then \eqref{eq:important-equa} implies that $$2\cdot \frac12+\frac13+\frac12+\frac12+\frac13\ge \frac14(2+2+3\cdot 2^2+2+2),$$ which is a contradiction. Therefore, $w\not\sim u$.\newline 
Now, if the vertex $u_1\in \mathcal{N}(u)\setminus \{v\}$ has degree $2$, then we reduce to Proposition \ref{prop:A1}. Thus, $\deg u_1\ge 3$. Applying \eqref{eq:repeat-key-opti}  to the vertices $u$ and $w$, we infer that the two vertices $w_1,w_2\in \mathcal{N}(w)\setminus\{v\}$ satisfy $\deg w_1=\deg w_2=2$. 
Again, applying \eqref{eq:repeat-key-opti} to the vertex pairs $(v,w_1)$ and  $(v,w_2)$, we infer that the vertices $x_1\sim w_1$ and $x_2\sim w_2$ satisfy $\deg x_1=\deg x_2=2$.\newline
If $\mathcal{N}(x_1)\cap \mathcal{N}(x_2)=\emptyset$, then the vertices $y_1\sim x_1$ and $y_2\sim x_2$ have to satisfy $\deg y_1,\deg y_2\ge 3$. 
By letting $f(w):=1$, $f(x_1):=f(x_2):=-1$ and $f:=0$ otherwise, \eqref{eq:important-equa} implies 
$$\frac13+\frac13+\frac12\ge\frac14(3+2+2),$$ which is a contradiction. Hence, $\mathcal{N}(x_1)\cap \mathcal{N}(x_2)\neq \emptyset$ and, similarly, also $\mathcal{N}(u)\cap \mathcal{N}(x_1)\ne\emptyset$ and  $\mathcal{N}(u)\cap \mathcal{N}(x_2)\ne \emptyset$.\newline
In particular, there exists $z$ such that $z\sim u$, $z\sim x_1$ and $z\sim x_2$. By letting now $f(w):=1$, $f(u):=f(x_1):=f(x_2):=-1$ and $f:=0$ otherwise, by  \eqref{eq:important-equa} we obtain
$$\frac{1}{\deg z}\cdot 3^2\ge\frac14(3+2+2+2),$$ which implies that $\deg z\le 4$.\newline
We now claim that $\deg z=3$.  Suppose the contrary, then $\deg z= 4$. Let $z_1\in \mathcal{N}(z)\setminus\{u,x_1,x_2\}$. If there exists $z_2\sim z_1$ of degree $2$, then $f(z_2):=f(u):=-1$, $f(z):=1$ and $f:=0$ otherwise together with  \eqref{eq:important-equa} gives $$\frac13+3\cdot\frac12\ge\frac14(2+4+2),$$  which is a contradiction. Therefore, for any $a\in \mathcal{N}(z_1)$, $\deg a\ge 3$. By \eqref{eq:repeat-key-opti} applied to $z_2$ and $v$, we infer that there exist at least three vertices of degree 3 in $\mathcal{N}(z_1)$, and $\deg z_1\ge 4$. Let $b_1,b_2\in \mathcal{N}(z_1)$ be such that $\deg b_1=\deg b_2=3$. Then, \eqref{eq:repeat-key-opti} applied to $b_1$ and $b_2$ implies that there is at least one vertex of degree $2$ in $\mathcal{N}(b_1)\triangle \mathcal{N}(b_2)$. Without loss of generality, we can assume that there exists  $b'\in \mathcal{N}(b_1)\setminus  \mathcal{N}(b_2)$ such that $\deg b'=2$. By Lemma \ref{lemmaA4}, there exists $c'\sim b'$ with $\deg c'=2$. But this is a contradiction, since we proved that all vertices adjacent to $b_1$ must  have degree 2.\newline
This proves that $\deg z=3$, which implies that $\Gamma$ must be the book graph on $N=8$ vertices.
\end{proof}

\begin{lemma}\label{lemmaA5}Let $\Gamma$ be a connected graph on $N\geq 3$ vertices with smallest degree $d= 2$ and $\varepsilon=\frac{1}{2}$. 
If $\Gamma$ is not in the classes described in Proposition \ref{prop:A1} and Proposition \ref{prop:A2}, then there is no vertex of degree $3$.
\end{lemma}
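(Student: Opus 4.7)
The plan is to argue by contradiction. Assume $\Gamma$ satisfies the hypotheses (so, in particular, $\Gamma$ is neither $C_3$, $C_6$, nor the $8$-vertex book graph) and contains a vertex $w$ with $\deg w = 3$ and neighbors $w_1,w_2,w_3$. The first step is to pin down the local structure at $w$. Combining Lemma \ref{lemmaA4} with Proposition \ref{prop:A2}, no $w_i$ can be of degree $2$: a degree-$2$ vertex $w_i$ would, by Lemma \ref{lemmaA4}, possess a degree-$2$ neighbor $v$, and necessarily $v\neq w$ since $\deg w=3$, so the path $v \sim w_i \sim w$ satisfies the hypothesis of Proposition \ref{prop:A2} and forces $\Gamma$ to be the $8$-vertex book graph, contradicting our assumption. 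Hence $\deg w_i \geq 3$ for all $i$. In parallel, using Lemma \ref{lemmaA3} together with Proposition \ref{prop:A1}, the set of degree-$2$ vertices of $\Gamma$ spans a $1$-regular subgraph (i.e.\ a matching of isolated edges), since three consecutive degree-$2$ vertices would force $\Gamma\in\{C_3,C_6\}$.

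The second step is to leverage \eqref{eq:important-equa} with well-chosen test functions to force quantitative constraints on the $w_i$. Testing with $f=\chi_{\{w\}}$ already yields $\sum_{i=1}^{3} 1/\deg w_i \geq 3/4$, so either some $w_i$ has degree exactly $3$ or all three have degree $4$ with tight equality. If some $w_i$ has degree $3$, then by the argument of the first step all of its neighbors also have degree $\geq 3$, so the set of degree-$3$ vertices seeds a connected region whose closed neighborhood contains only vertices of degree $\geq 3$. In particular, this region is disjoint from the degree-$2$ matching, and I would propagate the argument outward along chains of degree-$3$ vertices to further restrict the admissible configurations.

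The third and most delicate step is to build a contradiction from the coexistence of the degree-$3$ vertex $w$ and a degree-$2$ matching edge $u \sim v$ (which exists by Lemma \ref{lemmaA3}). My plan is to consider a shortest path in $\Gamma$ between $\{u,v\}$ and $w$: along it there must be a transition from vertices of degree $2$ to vertices of degree $\geq 3$, and at the first interior vertex $x$ of degree $\geq 3$ encountered, its two path-neighbors have known small degrees that let me either recognize the hypotheses of Proposition \ref{prop:A2} (yielding the $8$-book contradiction) or plug a function supported on $\{w,w_1,w_2,w_3,x,\ldots\}$ into \eqref{eq:important-equa} violating the displayed inequality. The main obstacle will be the case analysis: depending on (i) the adjacencies among $w_1,w_2,w_3$, (ii) whether each $w_i$ has degree exactly $3$ or $\geq 4$, and (iii) the shape of the connecting path, several local configurations must be ruled out. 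Each case will likely require its own tailored test function, in the same spirit as the ad hoc functions $f(w):=2,\,f(u):=f(v):=-1,\ldots$ used in the proof of Proposition \ref{prop:A2}, to force a strict numerical failure of \eqref{eq:important-equa}.
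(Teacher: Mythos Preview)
Your first step is exactly right and matches the paper: once you combine Lemma~\ref{lemmaA4} with Proposition~\ref{prop:A2}, every neighbor of any degree-$3$ vertex has degree at least $3$. But after that you take a long detour, and your Step~3 is only a plan for a case analysis that you yourself flag as ``the main obstacle''. That is a genuine gap: you have not shown that the shortest-path transition vertex $x$ actually produces a contradiction, and the ad hoc test functions you allude to are never specified.

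The paper avoids all of this with two further applications of \eqref{eq:repeat-key-opti}, and no shortest paths, no case analysis on the $w_i$, and no new test functions. First, for any path $u\sim v\sim w$ with $\deg u=3$, apply \eqref{eq:repeat-key-opti} to the pair $(u,w)$. If $\deg w\in\{2,3\}$, then since every vertex in $\mathcal N(u)\setminus\mathcal N(w)$ has degree $\ge 3$ by your Step~1, the inequality forces a vertex $w_1\in\mathcal N(w)\setminus\mathcal N(u)$ with $\deg w_1=2$; but then either $\deg w=2$ makes the count impossible (since $d=2$), or $\deg w=3$ and your Step~1 applied to $w$ itself gives $\deg w_1\ge 3$, a contradiction. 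Hence every vertex at distance $2$ from $u$ has degree at least $4$. Second, pick any two neighbors $x,y\in\mathcal N(u)$; since $u\in\mathcal N(x)\cap\mathcal N(y)$, inequality \eqref{eq:repeat-key-opti} applied to $(x,y)$ forces some $w\in\mathcal N(x)\triangle\mathcal N(y)$ with $\deg w\le 3$, contradicting what was just proved.

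So the key idea you are missing is that \eqref{eq:repeat-key-opti}, used on the degree-$3$ vertex itself paired with a distance-$2$ vertex, directly rules out small degrees at distance $2$, and then the same inequality on two neighbors of $u$ manufactures exactly such a small-degree vertex. Your Step~2 test with $\chi_{\{w\}}$ is correct but too weak to reach this conclusion, and the shortest-path machinery of Step~3 is unnecessary.
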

\begin{proof}
Assume the contrary, that is, there exists a vertex $u$ with $\deg u=3$. We first claim that, for any $v\in \mathcal{N}(u)$, $\deg v\ge 3$. This is true because, otherwise, there would be a vertex $v\sim u$ with $\deg v=2$, and by Lemma \ref{lemmaA4},
 also a vertex $w\sim v$ with $\deg w=2$. Hence, $\Gamma$ would be in the class described in Proposition \ref{prop:A2}, but we are assuming that this is not the case. This shows that $\deg v\ge 3$ for any $v\in \mathcal{N}(u)$.\newline
Now, we prove that 
there are no vertices $v$ and $w$ such that $u\sim v\sim w$  and $\deg w\in\{2,3\}$. This follows from the fact that, otherwise, applying \eqref{eq:repeat-key-opti} to $u$ and $w$ together with the above remarks implies that there exists $w_1\sim w$ with $\deg w_1=2$, while $\deg w=3$, which is a contradiction. \newline
But on the other hand, for any $x,y\in \mathcal{N}(u)$,  by \eqref{eq:repeat-key-opti} there exists $w\in \mathcal{N}(x)\triangle \mathcal{N}(y)$ with $\deg w\in\{2,3\}$, which contradicts the above argument.
\end{proof}

We are now able to prove Theorem \ref{thm:d2}.

\begin{proof}[Proof of Theorem \ref{thm:d2}]
We shall illustrate this proof in Figure \ref{fig:thmd2} below.\newline
Observe that, if $\Gamma$ is in one of the classes described in Proposition \ref{prop:A1} and Proposition \ref{prop:A2}, then the claim follows. Therefore, we can assume that this is not the case. We can then apply Lemma \ref{lemmaA5} and infer that there is no vertex of degree $3$. Also, if all vertices have degree $<3$, from the fact that $d=2$ it follows that all vertices have degree $2$, but this is not possible, since we are assuming that $\Gamma$ does not satisfy the condition in  Proposition \ref{prop:A1}. Therefore, we can assume that there exists a vertex with degree $\ge 4$.\newline Moreover, we can also assume that there exists one vertex $u$ with $\deg u\geq 4$ such that $\mathcal{N}_2(u)\neq \emptyset$. This can be seen by the fact that, if not, then all vertices of degree $2$ would only be adjacent to vertices of degree $2$, contradicting the fact that $\Gamma$ is connected.\newline Now, from the fact that $\mathcal{N}_2(u)\neq \emptyset$, we can infer two facts about $\mathcal{N}(u)=\mathcal{N}_2(u)\sqcup \mathcal{N}_{\geq 4}(u):$
\begin{enumerate}
    \item For any $v\in \mathcal{N}_2(u)$, there exists a unique $w\sim v$ with $\deg w=2$. This follows immediately from Lemma \ref{lemmaA4} and from the fact that $\deg u\geq 4$.
    \item If $\mathcal{N}_{\geq 4}(u)\ne\emptyset$, then for any $v\in \mathcal{N}_{\geq 4}(u)$ there exists $w\sim v$ with $\deg w=2$. This can be shown by fixing $v'\in \mathcal{N}_{2}(u)$ and then applying \eqref{eq:repeat-key-opti} to $ \mathcal{N}(v')\triangle \mathcal{N}(v)$.
\end{enumerate}

As a consequence of the first fact, we can write
$$\mathcal{N}_{2}(u)=\{v_1,\ldots,v_k,v_{k+1},\ldots,v_{k+2l}\},$$
where:
\begin{itemize}
    \item For each $i=1,\ldots,k$, there exists a unique $w_i\in V\setminus\mathcal{N}_2(u)$ such that $v_i\sim w_i$ and $\deg w_i=2$;
    \item $v_{k+1}\sim v_{k+2},\ldots,v_{k+2l-1}\sim v_{k+2l}$.
\end{itemize}

Now, for $i=1,\ldots,k$, since $v_i\sim w_i$ and $\deg v_i=\deg w_i=2$, we can infer that $x_i\in \mathcal{N}(w_i)\setminus \{v_i\}$ must have degree $\geq 4$, because otherwise, $\Gamma$ would satisfy the assumption of Proposition \ref{prop:A1}, but we are assuming that this is not the case. Therefore, in particular, $w_1,\ldots,w_k$ must be pairwise distinct, while $x_1,\ldots,x_k$ do not need to be pairwise distinct. \newline
Up to relabeling $x_1,\ldots x_k$, we assume that
\begin{align*}
    x'_1&:=x_1=\ldots=x_{k_1},\\
    x'_2&:=x_{k_1+1}=\ldots=x_{k_1+k_2},\\
    &\ldots \\
    x'_{p+q}&:=x_{k_1+\ldots+k_{p+q-1}+1}=\ldots=x_{k_1+\ldots+k_{p+q}}=x_k,
\end{align*}where:
\begin{itemize}
    \item $k_1,\ldots,k_{p+q}$ are such that $k=k_1+\ldots+k_{p+q}$,
    \item $x_1',\ldots,x'_{p+q}$ are pairwise distinct,
    \item $x_1',\ldots,x'_{p}\not\in \mathcal{N}(u)$ while $x'_{p+1},\ldots,x'_{p+q}\in \mathcal{N}(u)$,
    \item $q$ is such that $q\le \deg u-k-2l$.
\end{itemize}

Following the above notation, we also relabel the vertices $v_1,\ldots,v_k$ and $w_1,\ldots,w_k$ as
\begin{align*}
   &v_1,\ldots,v_{k_1},
   \ldots, 
   v_{k_1+\ldots+k_{p+q}},
   \\ &w_1,\ldots,w_{k_1},
   \ldots, 
 w_{k_1+\ldots+k_{p+q}}.
\end{align*}
Now, we write

$$\mathcal{N}_{\geq 4}(u)\setminus \{x'_{p+1},\ldots,x'_{p+q}\}=\{y_1,\ldots,y_r\}.$$ 

Then, as observed above, for each $i=1,\ldots, r$ there exists $y_i'\sim y_i$ with $\deg y_i'=2$. Furthermore, by Lemma \ref{lemmaA4}, for each $i=1,\ldots,k$ there also exists $z_i\sim y_i'\sim y_i$ with $\deg z_i=\deg y_i'=2$. Clearly, $y_1',\ldots,y_r'$ are pairwise distinct, while $z_1,\ldots,z_r$ may have some overlap with $y_1',\ldots,y_r'$.\newline 
We illustrate the above vertices in Figure \ref{fig:thmd2} and we observe that
\begin{align*}
    \mathcal{N}(u)&=\mathcal{N}_2(u)\sqcup \mathcal{N}_{\geq 4}(u)\\ &=\{v_1,\ldots,v_k,v_{k+1},\ldots,v_{k+2l}\}\sqcup \{x'_{p+1},\ldots,x'_{p+q}\} \sqcup \{y_1,\ldots,y_r\},
\end{align*}
where the above vertices are all pairwise distinct, therefore $$\deg u=k+2l+q+r.$$

\begin{figure}
    \centering
    \begin{tikzpicture}[scale=1.8]
\draw (0,0)--(1,2)--(2,2)--(3,1.5);
\draw (0,0)--(1,1.5)--(2,1.5)--(3,1.5);
\draw (0,0)--(1,0.8)--(2,0.8)--(3,1.5);
\draw (0,0)--(-1,0.5)--(-1,0.1)--(0,0)--(-1,-0.4)--(-1,-0.8)--(0,0);
\draw (0,0)--(1,0)--(3,0)--(4.5,0)--(3,-0.4)--(1,-0.4)--(0,0);
\node(1) at (1,1.2) {$\vdots$};
 \node (u) at  (0,0) {$\bullet$};
\node (v) at  (1,2) {$\bullet$};
\node (v) at  (1,1.5) {$\bullet$};
\node (v) at  (1,0.8) {$\bullet$};
\node (w) at  (2,2) {$\bullet$};
\node (w) at  (2,1.5) {$\bullet$};
\node (w) at  (2,0.8) {$\bullet$};
\node (v) at  (-1,0.5) {$\bullet$};
\node (v) at  (-1,0.1) {$\bullet$};
\node (v) at  (-1,-0.4) {$\bullet$};
\node (v) at  (-1,-0.8) {$\bullet$};
\node (v) at  (1-0.2,2-0.1) {$v_1$};
\node (v) at  (1,1.5+0.14) {$v_2$};
\node (v) at  (2-0.2,2-0.1) {$w_1$};
\node (v) at  (2,1.5+0.14) {$w_2$};
\node (v) at  (1-0.15,0.8+0.14) {$v_{k_1}$};
\node (v) at  (-1.3,0.5) {$v_{k+1}$};
\node (v) at  (-1.3,0.1) {$v_{k+2}$};
\node (v) at  (-1.5,-0.4) {$v_{k+2l-1}$};
\node (v) at  (-1.4,-0.8) {$v_{k+2l}$};
\node(1) at (-1,-0.1) {$\vdots$};
\node (x) at  (3,1.5) {$\bullet$};
\node (x) at  (3.2,1.5) {$x'_1$};
\node(1) at (1,0.5) {$\vdots$};
\node(v) at (1,0) {$\bullet$};
\node(v) at (1.1,0.2) {$v_{k_1+\ldots+k_{p-1}+1}$};
\node(v) at (2.9,0.2) {$w_{k_1+\ldots+k_{p-1}+1}$};
\node(1) at (1,-0.2) {$\vdots$};
\node(v) at (1,-0.4) {$\bullet$};
\node(v) at (1.1,-0.6) {$v_{k_1+\ldots+k_{p}}$};
\node(w) at (2.9,-0.6) {$w_{k_1+\ldots+k_{p}}$};\node(w) at (3,-0.4) {$\bullet$};
\node(x) at (4.3,0.2) {$x'_p$};
\draw (4.6,0.5)--(4.5,0)--(4.8,0);
\node (w) at  (3,0) {$\bullet$};
\node (x) at  (4.5,0) {$\bullet$};
\node (u) at  (0.05,0.3) {$u$};
\node(y) at (0,2) {$\bullet$};
\draw (0.3,1.9)--(0,2)--( 0.3,1.6);
\node(y') at (-1,2) {$\bullet$};
\node(y) at (-0.1,1.9) {$y_1$};
\node(y') at (-1.15,1.8) {$y_1'$};
\node(1) at (-1.3,1.3) {$\vdots$};
\node(y) at (-0.5,1.6) {$\bullet$};
\draw (-0.6,1.8)--(-0.5,1.6)--(-0.2,1.5);
\node(y') at (-1.5,1.6) {$\bullet$};
\node(y) at (-0.5-0.1,1.6-0.1) {$y_2$};
\node(y') at (-1.5-0.15,1.6-0.1) {$y_2'$};
\node(y) at (-1,1) {$\bullet$};
\node(y') at (-2,1) {$\bullet$};
\node(y) at (-0.8,1.1) {$y_r$};
\draw (-2,1)--(-2.3,0.5);
\node(y') at (-2.2,1.2) {$y_r'$};
\draw (-2,1.8)--(-1.5,1.6)--(-0.5,1.6)--(0,0)--(0,2)--(-1,2)--(-1.5,2);
\draw (0,0)--(-1,1)--(-2,1);
\draw (0,0) to[out=-60,in=150] (4.5,-1.1);
\draw (0,0)--(0.8,-1)--(3,-1.3)--(4.5,-1.1);
\node (v) at (0.8,-1) {$\bullet$};
\node (v) at (0.5,-1.3) {$\bullet$};
\node (w) at (3,-1.3) {$\bullet$};
\node (w) at (3,-1.8) {$\bullet$};\draw (0,0)--(0.5,-1.3)--(3,-1.8)--(4.5,-1.1);
\node (1) at (2.2,-1.4) {$\cdots$};
\node(x') at (4.5,-1.1) {$\bullet$};
\draw (4.5,-1.1)--(4.8,-1.5);
\draw (0,0)--(-1.95,-2)--(-0.5,-2)--(0,-1.6)--(0,0);\draw (0,0)--(-0.8,-1.1)--(-1.6,-1.8)--(-1.95,-2);
\node (w) at (-1.6,-1.8) {$\bullet$};
\node (w) at (-0.5,-2) {$\bullet$};
\node (v) at (0,-1.6) {$\bullet$};
\node (v) at (-0.8,-1.1) {$\bullet$};
\node (x) at (-1.95,-2) {$\bullet$};
\node(x') at (4.5,-0.8) {$x_{p+1}'$};
\node(v) at (1.5,-0.93) {$v_{k_1+\ldots+k_{p}+1}$};
\node(w) at (3,-1.1) {$w_{k_1+\ldots+k_{p}+1}$};
\node (v) at (1.1,-1.2) {$v_{k_1+\ldots+k_{p+1}}$};
\node (w) at (3,-2) {$w_{k_1+\ldots+k_{p+1}}$};
\node (1) at (1.2,-1.9) {$\cdots\cdots$};
\node (1) at (-0.5,-1.6) {$\cdots$};
\node (v) at (0.9,-1.6) {$v_{k_1+\ldots+k_{p+q-1}+1}$};
\node (w) at (0.4,-2.15) {$w_{k_1+\ldots+k_{p+q-1}+1}$};
\node (v) at (-0.8-0.75,-1.1) {$v_{k_1+\ldots+k_{p+q}}$};
\node (w) at (-1.6+0.75,-1.8) {$w_{k_1+\ldots+k_{p+q}}$};
\node (x') at (-1.95-0.2,-2.1) {$x_{p+q}'$};
\draw (-1.95,-2)--(-2,-1.5);
\end{tikzpicture}
    \caption{Illustration of the proof of Theorem \ref{thm:d2}}
    \label{fig:thmd2}
\end{figure}
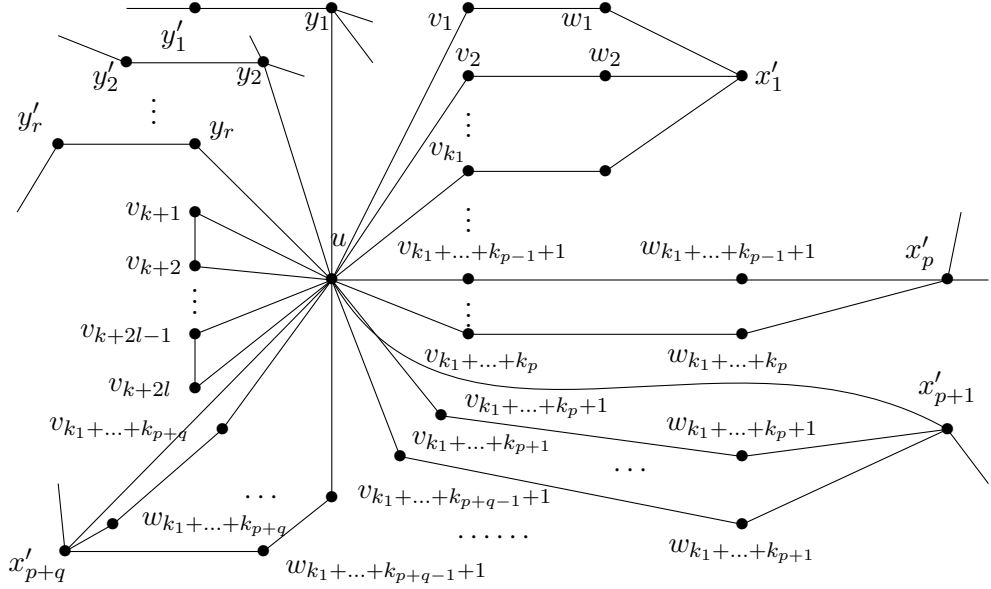

Now, by letting 
\begin{align*}
    &f(u):=2,\\
    &f(v_{k+1}):=\ldots:=f(v_{k+2l}):=f(w_1):=\ldots:=f(w_k):=-1,\\
    & f(v_1):=\ldots:=f(v_k):=1 \quad \text{if }k\le 2l,\\
    & f(v_1):=\ldots:=f(v_{2l}):=1,\\
    &f(v_{2l+t}):=(-1)^t \quad \text{for }t=1,\ldots,k-2l, \quad \text{if }2l<k,\\
    &f(y_1'):=\ldots:=f(y_r'):=-1,\\
    &f:=0 \quad \text{otherwise}
\end{align*}and
$$ c_{k,l}:=\begin{cases}(2l-k)^2,&\text{ if }2l\ge k,\\
1,&\text{ if }2l< k\text{ and }k\text{ is odd},\\
0,&\text{ if }2l< k\text{ and }k\text{ is even},
\end{cases}$$
from \eqref{eq:important-equa} we can infer that
\begin{align*}
&\frac{c_{k,l}}{\deg u}+\frac{2k+2l}{2}+\sum_{i=1}^p\frac{k_i^2}{\deg x_i'}+\sum_{i=p+1}^{p+q}\frac{(k_i-2)^2}{\deg x_i'}+r\cdot\frac12+\sum_{i=1}^r\frac{1}{\deg y_i}\\ =~&\sum\limits_{w\in V}\frac{1}{\deg w}\left(\sum\limits_{v\in \mathcal{N}(w)}f(v)\right)^2
\\\ge~&
\frac14\sum\limits_{w\in V} \deg w\cdot f(w)^2 \\=~& \frac14 (\deg u\cdot 2^2
+2(2k+2l)+2r
)  \\=~& 
\deg u+k+l+\frac r2=2k+3l+q+\frac32r.
\end{align*}
We now use the following known facts:
\begin{itemize}
    \item $\deg y_i\ge 4$, for each $i=1,\ldots,r$;
    \item $\deg x_i'\ge\max\{k_i,4\}$, for $i=1,\ldots,p+q$;
    \item $(k_i-2)^2\le k_i^2$ (since $k_i\ge 1$), for $i=p+1,\ldots,p+q$.
\end{itemize}
Putting everything together, we obtain that
\begin{align*}
&2k+3l+q+\frac32r
\\ \le~&\frac{c_{k,l}}{\deg u}+k+l+\sum_{i=1}^{p+q}\frac{k_i^2}{\deg x_i'}+r\cdot \frac12+\sum_{i=1}^r\frac{1}{4}
\\ \le~&\frac{c_{k,l}}{k+2l+q+r}+k+l+\sum_{i=1}^{p+q}\frac{k_i^2}{k_i}+\frac34r,
\end{align*}
which implies 
$$k+2l+q+\frac34 r\le \frac{c_{k,l}}{k+2l+q+r}+\sum_{i=1}^{p+q}k_i=k+\frac{c_{k,l}}{k+2l+q+r}.$$
Therefore, $$2l+q+\frac34 r\le \frac{c_{k,l}}{k+2l+q+r}\le\begin{cases}\frac{(2l-k)^2}{k+2l}\le 2l&\text{ if }2l\ge k,\\
\frac{1}{\deg u}\le \frac14&\text{ if }2l< k,
\end{cases}$$
which implies that $r=q=0$, either $l=0$ or $k=0$ (but they cannot be both zero), and $\deg x_i'=k_i\ge 4$, for $i=1,\ldots,p$.\newline 
We then have two cases:
\begin{enumerate}
    \item If $k=0$, then $\Gamma$ is a petal graph with $2l+1$ vertices.
\item If $l=0$, then we can write
$$\Gamma=(\Gamma_1\sqcup\ldots\sqcup\Gamma_p)/\{u_1,\ldots,u_p\},$$
where for $i=1,\ldots,p$, $\Gamma_i$ as a $k_i$-book graph and $u_i$ as one of its vertices of degree $k_i$, and the above notation means that $\Gamma$ is given by the union of $\Gamma_1,\ldots,\Gamma_p$ by identifying $u_1,\ldots,u_p$ as one vertex, which is $u$.\newline
In this case, by letting $f(x_1'):=2$, $f(w_i):=(-1)^i$ for $i=1,\ldots,k_1$, $f(v_1):=\ldots:=f(v_k):=-1$ and $f:=0$ otherwise, by \eqref{eq:important-equa} we can infer that $\deg u=k_1$ (with the same proof that we used for inferring that $\deg x_1'=k_1$). As a consequence, $p=1$, therefore $\Gamma$ is a book graph on $N=2k+2$ vertices.
\end{enumerate}
This proves the claim.
\end{proof}

\subsubsection{The case $d=1$}\label{section:d1}

Given $N\geq 3$, we let $P_N$ denote the path graph on $N$ vertices, and we observe that $P_4$ coincides with the $1$--book graph. It is left to prove the following
\begin{theo}
For any connected graph $\Gamma$ on $N\geq 3$ vertices with smallest degree $d=1$, if $\Gamma\neq P_4$ then
   \begin{equation*}
        \varepsilon<\frac{1}{2}.
    \end{equation*}
\end{theo}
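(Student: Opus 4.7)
The plan is to argue by contradiction: suppose $\Gamma$ is a connected graph on $N\geq 3$ vertices with $d=1$, $\Gamma\neq P_4$, and $\varepsilon=\frac12$. Fix a pendant vertex $w$ with unique neighbor $v$. First, plug $f=\mathbf 1_w$ into \eqref{eq:important-equa}: the left-hand side collapses to $1/\deg v$ while the right-hand side equals $1/4$, yielding $\deg v\in\{2,3,4\}$ (it is at least $2$ by connectedness since $N\geq 3$).

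Next, for every $u\in\mathcal{N}(v)\setminus\{w\}$ one has $\mathcal{N}(w)\cap\mathcal{N}(u)=\{v\}$ and $\mathcal{N}(w)\triangle\mathcal{N}(u)=\mathcal{N}(u)\setminus\{v\}$, so applying \eqref{eq:repeat-key-opti} to the pair $(w,u)$ gives, after rewriting,
$$\sum_{x\in\mathcal{N}(u)\setminus\{v\}}\frac{1}{\deg x}\ \ge\ \frac{\deg u+1}{4}.$$
In particular, $\deg u\ge 2$ (else the left side vanishes while the right is $1/2$).

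Then I would split on $\deg v$. For $\deg v=2$, let $u$ be the other neighbor of $v$. If $\deg u=2$, the displayed constraint forces the unique $x\in\mathcal{N}(u)\setminus\{v\}$ to satisfy $\deg x=1$; connectivity together with the fact that all four degrees are saturated then implies $\Gamma=P_4$, contradicting our assumption. If $\deg u\ge 3$, the same constraint forces a pendant or low-degree neighbor $x_1$ of $u$; applying \eqref{eq:repeat-key-opti} to $(x_1,x_j)$ for other neighbors $x_j$ of $u$ (which share $u$ as a common neighbor) propagates the restriction, and finally I would use a test function supported on a two- or three-vertex set chosen by the local eigenfunction ansatz at the pendant (namely $f(v)=f(w)/2$ for $\lambda=\tfrac12$ and $f(v)=-f(w)/2$ for $\lambda=\tfrac32$) to produce a Rayleigh quotient strictly less than $1/4$, contradicting Lemma \ref{lemma:first}. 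For $\deg v\in\{3,4\}$, the displayed constraint holds for each of the $\deg v-1\ge 2$ non-pendant neighbors $u_i$ of $v$, and combining these with the bound $\deg v\le 4$ leaves only a small family of local structures; each is killed by a multi-vertex test function of the shape $f=\alpha\mathbf 1_w+\beta\mathbf 1_v+\sum_i\gamma_i\mathbf 1_{u_i}$, with coefficients designed from the local eigenfunction structure.

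The main obstacle is the subcase $\deg v=2,\,\deg u\ge 3$ (and similarly the subcases under $\deg v\in\{3,4\}$): neither Step~1 nor Step~2 alone yields a contradiction, so one must pick a multi-vertex test function — guided by the $\lambda=\tfrac12$ or $\lambda=\tfrac32$ eigenfunction ansatz near the pendant — that witnesses Rayleigh quotient $<\tfrac14$. The bookkeeping of neighborhoods one or two steps out from $w$ is what makes the argument delicate; once the local structure is pinned down the contradictory $f$ is essentially forced, but cleanly enumerating the cases is the real work.
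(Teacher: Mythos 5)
Your opening moves are sound and coincide with the paper's: the indicator function of the pendant vertex gives $\deg v\le 4$, the pair $(w,u)$ in \eqref{eq:repeat-key-opti} gives the constraint $\sum_{x\in\mathcal{N}(u)\setminus\{v\}}\bigl(\tfrac{1}{\deg x}-\tfrac14\bigr)\ge\tfrac12$, and the subcase $\deg v=\deg u=2$ correctly collapses to $P_4$. But everything after that is a plan, not a proof: for $\deg v=2$ with $\deg u\ge 3$, and for $\deg v\in\{3,4\}$, you say the local structure gets ``pinned down'' and a contradictory multi-vertex test function is ``essentially forced,'' without exhibiting either the enumeration of local structures or the functions. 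That is precisely where all the work lies, and there is a structural reason to doubt that your purely local strategy closes out: at $\varepsilon=\tfrac12$ the inequality \eqref{eq:important-equa} is non-strict, so local test functions only ever yield non-strict degree constraints, and genuine equality configurations exist (e.g.\ deleting the pendant and its neighbour can leave a petal or book graph, for which the relevant Rayleigh quotients equal exactly $\tfrac14$). To rule such graphs out you need strict information that local degree bookkeeping around the pendant does not supply; you need to know the actual eigenfunctions/spectra of the residual graph.

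The paper resolves this not by deeper local case analysis but by a minimal-counterexample induction on $N$: it deletes the pendant $u$ (when $\deg v=4$) or the pair $\{u,v\}$ (when $\deg v\in\{2,3\}$), takes an eigenfunction of $(\id-\Delta)^2$ on the reduced graph realizing $\hat\varepsilon^2$ resp.\ $\tilde\varepsilon^2$, extends it to $\Gamma$ with the values at the deleted vertices as free parameters, and then plays the resulting one- or two-parameter family of test functions against \eqref{eq:important-equa}. The contradictions come from combining this with the already-proved classification of equality cases for minimum degree $\ge 2$ (Theorems \ref{thm:d2} and \ref{thm:d3}) and with the minimality assumption (any smaller counterexample with a degree-one vertex must be $P_4$), plus explicit knowledge of eigenfunctions of petal and book graphs to show the residual equality case cannot be lifted. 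None of this global/inductive machinery appears in your sketch, and without it (or a completed finite local enumeration, which you have not carried out and which would still have to handle the exact-equality residual configurations) the proof has a genuine gap.
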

\begin{proof}
Suppose the contrary, then there exists a connected graph $\Gamma\neq P_4$ on $N\geq 3$ vertices with $\varepsilon=\frac{1}{2}$ and $d=1$. We fix such $\Gamma=(V,E)$ so that it is the graph with these properties that has the smallest possible order, that is, if $\Gamma'\neq P_4$ is a connected graph with smallest degree $1$ and $3\leq N'<N$ vertices, then $\varepsilon(\Gamma')<\frac{1}{2}$. If we show that the existence of $\Gamma$ brings to a contradiction, then we are done.\newline
Fix $u\in V$ of degree $1$, and let $v\sim u$ be its only neighbour. Then, by letting $f(u):=1$ and $f:=0$ otherwise, \eqref{eq:important-equa} gives $\frac{1}{\deg v}\ge\frac14$, i.e., $\deg v\le 4$.\newline
Now, assume first that $\deg v=4$. Let $\hat{\Gamma}:=\Gamma\setminus\{u\}$, let $\hat{\varepsilon}:=\varepsilon(\hat{\Gamma})$ and let $f$ be a function on $V(\hat{\Gamma})=V\setminus\{u\}$ that is an eigenfunction for $(\id-\Delta(\hat{\Gamma}))^2$ with eigenvalue $\hat{\varepsilon}^2$. Then,
\begin{equation}\label{eq:hat2}
    \hat{\varepsilon}^2=\frac{\frac{1}{\deg v-1}\left(\sum_{x\in \mathcal{N}(v)\setminus \{u\}}f(x)\right)^2+\sum_{w\in V\setminus\{v,u\}}\frac{1}{\deg w}\left(\sum_{x\in \mathcal{N}(w)}f(x)\right)^2}{(\deg v-1)f(v)^2+\sum_{w\in V\setminus\{v,u\}}\deg w f(w)^2}.
\end{equation}
Moreover, since $\hat{\varepsilon}^2=(1-\hat{\lambda})^2$ for some eigenvalue $\hat{\lambda}$ of $\hat{\Gamma}$, from \eqref{eq:defL} it follows that
\begin{equation*}
    (1-\hat{\lambda})f(v)=\frac{1}{\deg v-1}\left(\sum_{x\in \mathcal{N}(v)\setminus \{u\}}f(x)\right),
\end{equation*}therefore
$$\hat{\varepsilon}\cdot (\deg v-1)\cdot |f(v)|=\left|\sum_{x\in \mathcal{N}(v)\setminus \{u\}}f(x)\right|$$ 
and similarly, for all $w\in V\setminus\{u,v\}$, $$\hat{\varepsilon} \cdot \deg w\cdot |f(w)|=\left|\sum_{x\in \mathcal{N}(w)}f(x)\right|.$$ 

In particular, without loss of generality, we may assume $$\hat{\varepsilon} \cdot (\deg v-1)\cdot f(v)=\sum_{x\in \mathcal{N}(v)\setminus \{u\}}f(x)$$ and  $$\hat{\varepsilon}\cdot \deg w\cdot f(w)=\sum_{x\in \mathcal{N}(w)}f(x),\quad \forall w\in V\setminus\{u,v\},$$
as the proof can be easily adapted otherwise.\newline 
Now, since we are assuming that $\varepsilon=\varepsilon(\Gamma) =\frac12$, we have that, independently of the value of $f(u)\in \mathbb{R}$ that we choose, \eqref{eq:important-equa} implies
$$\frac{\sum_{w\in V}\frac{1}{\deg w}\left(\sum_{x\in \mathcal{N}(w)}f(x)\right)^2}{\sum_{w\in V}\deg w f(w)^2}\ge\frac14,$$
or equivalently,
\begin{align*}
  & f(v)^2+\frac{1}{4}\cdot \left(\sum_{x\in \mathcal{N}(v)}f(x)\right)^2+\sum_{w\in V\setminus\{v,u\}}\frac{1}{\deg w}\left(\sum_{x\in \mathcal{N}(w)}f(x)\right)^2\\
  \ge~ & \frac14\sum_{w\in V}\deg w f(w)^2,  
\end{align*}
which by the above observations can also be rewritten as\begin{align*}
&f(v)^2+\frac{1}{4}(f(u)+\hat{\varepsilon} (\deg v-1)f(v))^2+\sum_{w\in V\setminus\{v,u\}}\frac{1}{\deg w}(\hat{\varepsilon} \cdot \deg w\cdot f(w))^2
\\\ge~ &\frac14 \left(f(u)^2+4\cdot f(v)^2
+
\sum_{w\in V\setminus\{v,u\}}\deg w f(w)^2\right).
\end{align*}We now consider two cases.
\begin{enumerate}[{Case }1.]
\item 
If $f(v)=0$, then the above inequality becomes $$\left(\hat{\varepsilon}^2-\frac14 \right)\cdot \left(\sum_{w\in V\setminus\{v,u\}}\deg w f(w)^2\right)\ge0.$$
Since $\hat{\varepsilon}^2\le\frac14$ by Theorem \ref{thm:part1} and $f(v)=0$ implies $f|_{V\setminus\{v,u\}}\ne0$, we deduce that $\hat{\varepsilon}^2=\frac14$. But this brings to a contradiction. In fact, by construction, it is clear that $\hat{\Gamma}$ is a connected graph on $N-1\geq 4$ vertices, since $v$ has degree $3$ in $\hat{\Gamma}$. Hence, if $\hat{\Gamma}$ has vertices of degree $1$, then by the assumption on the graphs with less than $N$ vertices, $\hat{\Gamma}$ must be $P_4$, but this is not possible because of the degree of $v$. Similarly, if $\hat{\Gamma}$ does not have vertices of degree $1$, then by Theorem \ref{thm:d2}, $\hat{\Gamma}$ is either a petal graph or a book graph, and in particular, since $v$ has degree $3$ in $\hat{\Gamma}$, then $\hat{\Gamma}$ must be the book graph on $8$ vertices. But in this case, one can directly check that $\varepsilon=\varepsilon(\Gamma)<\frac{1}{2}$, which is a contradiction.

\item If $f(v)\ne0$, then the above inequality becomes
$$\frac32\cdot \hat{\varepsilon}  f(u)f(v)+\frac94\cdot \hat{\varepsilon}^2 f(v)^2+\left(\hat{\varepsilon}^2-\frac14 \right)\cdot\left(\sum_{w\in V\setminus\{v,u\}}\deg w f(w)^2\right)\ge0.$$
If $\hat{\varepsilon}\ne0$, we can always derive a contradiction from the above inequality by choosing an appropriate value of $f(u)\in\mathbb{R}$. Therefore, $\hat{\varepsilon}=0$, implying that the inequality holds if and only if $f|_{V\setminus\{v,u\}}=0$. But in this case, \eqref{eq:hat2} is not satisfied, which is a contradiction.
\end{enumerate}
Hence, $\deg v=4$ always brings to a contradiction, implying that $\deg v\in\{2,3\}$, since we already know that $\deg v\leq 4$.\newline
Let now $\tilde{\Gamma}:=\Gamma\setminus\{u,v\}$, let $\tilde{\varepsilon}:=\varepsilon(\tilde{\Gamma})$ and let $f$ be a function on $V(\tilde{\Gamma})=V\setminus\{u,v\}$ that is an eigenfunction for $(\id-\Delta(\tilde{\Gamma}))^2$ with eigenvalue $\tilde{\varepsilon}^2$. Then,
\begin{small}
\begin{equation*}\label{eq:tilde2}
    \tilde{\varepsilon}^2=\frac{\sum_{w\in \mathcal{N}(v)\setminus \{u\}}\frac{1}{\deg w-1}\left(\sum_{x\in \mathcal{N}(w)\setminus \{v\}}f(x)\right)^2+\sum_{w\in V\setminus \mathcal{N}(v)}\frac{1}{\deg w}\left(\sum_{x\in \mathcal{N}(w)}f(x)\right)^2}{\sum_{w\in \mathcal{N}(v)\setminus \{u\}}(\deg w-1)f(w)^2+\sum_{w\in V\setminus \mathcal{N}(v)}\deg w f(w)^2}
\end{equation*}
\end{small}
and, similarly to the first part of the proof, without loss of generality we can assume that
$$\begin{cases}\sum_{x\in \mathcal{N}(w)\setminus \{v\}}f(x)=\tilde{\varepsilon}(\deg w-1)f(w),&\forall w\in \mathcal{N}(v)\setminus \{u\},\\
\sum_{x\in \mathcal{N}(w)}f(x)=\tilde{\varepsilon}\deg wf(w),&\forall w\in V\setminus \mathcal{N}(v).
\end{cases}$$
In fact, the case in which $\tilde{\varepsilon}$ is replaced by $-\tilde{\varepsilon}$ is similar.\newline
Since $\varepsilon=\varepsilon(\Gamma)=\frac{1}{2}$,  \eqref{eq:important-equa} holds for any value of $f(u)$ and $f(v)$. Therefore,
\begin{align*}
&f(v)^2+\frac{1}{\deg v}\left(f(u)+\sum_{w\in \mathcal{N}(v)\setminus \{u\}}f(w)\right)^2+\sum_{w\in V\setminus \mathcal{N}(v)}\frac{1}{\deg w}\left(\tilde{\varepsilon}\deg wf(w)\right)^2\\&    +\sum_{w\in \mathcal{N}(v)\setminus \{u\}}\frac{1}{\deg w}\left(f(v)+\tilde{\varepsilon}(\deg w-1)f(w)\right)^2
\\\ge~& \frac14\left(f(u)^2+
\deg vf(v)^2+\sum_{w\in V\setminus\{v,u\}}\deg w f(w)^2\right).
\end{align*}
This is equivalent to
\begin{align*}
&f(v)^2+\frac{1}{\deg v}\left(f(u)+\sum_{w\in \mathcal{N}(v)\setminus \{u\}}f(w)\right)^2+\left(\tilde{\varepsilon}^2-\frac14\right)\cdot \left(\sum_{w\in V\setminus \mathcal{N}(v)}\deg w f(w)^2\right)\\&    +\sum_{w\in \mathcal{N}(v)\setminus \{u\}}\frac{1}{\deg w}\left(f(v)+\tilde{\varepsilon}(\deg w-1)f(w)\right)^2
\\\ge~& \frac14\left(f(u)^2+
\deg vf(v)^2+\sum_{w\in \mathcal{N}(v)\setminus \{u\}}\deg w f(w)^2\right),
\end{align*}
which can be also rewritten as
\begin{align*}
&\left(1+\sum_{w\in \mathcal{N}(v)\setminus \{u\}}\frac{1}{\deg w}-\frac{\deg v}{4}\right)f(v)^2+\left(\frac{1}{\deg v}-\frac14\right)f(u)^2\\
& +\frac{2}{\deg v}\cdot f(u)\left(\sum_{w\in \mathcal{N}(v)\setminus \{u\}}f(w)\right)+f(v)\left(\sum_{w\in \mathcal{N}(v)\setminus \{u\}}\frac{2}{\deg w}\cdot \tilde{\varepsilon}(\deg w-1)f(w)\right)\\
&    +\frac{1}{\deg v}\left(\sum_{w\in \mathcal{N}(v)\setminus \{u\}}f(w)\right)^2
\\\ge~& \sum_{w\in \mathcal{N}(v)\setminus \{u\}}\left(\frac14\cdot \deg w f(w)^2-\frac{1}{\deg w}\left(\tilde{\varepsilon}(\deg w-1)f(w)\right)^2\right)\\
& +\left(\frac14-\tilde{\varepsilon}^2\right)\sum_{w\in V\setminus \mathcal{N}(v)}\deg w f(w)^2.
\end{align*}
Using the fact that $\deg v\in\{2,3\}$, we can finally complete the proof by considering the following three cases.

\begin{enumerate}[{Case }1:]
\item $\tilde{\varepsilon}^2\le\frac14$.

In this case, by letting \begin{equation*}
    f(u):=-\sum_{w\in \mathcal{N}(v)\setminus \{u\}}f(w)
\end{equation*}
and $f(v):=0$, the above inequality implies that
\begin{align}\label{0geq}
    0\ge~& \sum_{w\in \mathcal{N}(v)\setminus \{u\}}\left(\frac14\cdot \deg w f(w)^2-\frac{1}{\deg w}\left(\tilde{\varepsilon}(\deg w-1)f(w)\right)^2\right)\\\nonumber
& +\left(\frac14-\tilde{\varepsilon}^2\right)\cdot \left(\sum_{w\in V\setminus \mathcal{N}(v)}\deg w f(w)^2\right).
\end{align}
Since, for $w\in \mathcal{N}(v)\setminus\{u\}$,
\begin{align*}
&\frac14\cdot \deg w f(w)^2-\frac{1}{\deg w}\left(\tilde{\varepsilon}(\deg w-1)f(w)\right)^2
\\\ge~& \frac{f(w)^2}{4}\cdot \left(\deg w-\frac{(\deg w-1)^2}{\deg w}\right)     \\=~& \frac{f(w)^2}{4}\cdot \frac{2\deg w-1}{\deg w}\ge0,
\end{align*}the inequality \eqref{0geq} must be an equality, implying that $f|_{\mathcal{N}(v)\setminus \{u\}}=0$ and $\tilde{\varepsilon}^2=\frac14$. Thus, we have two cases:
\begin{enumerate}
    \item If $\tilde{\Gamma}$ is connected, then it is either a book graph or a petal graph.
    \item If $\tilde{\Gamma}$ is not connected, then it has two connected components and at least one of them is either a book graph or a petal graph.
\end{enumerate}
In both cases it is possible to find an eigenfunction $f$ for $(\id-\Delta(\tilde{\Gamma}))^2$ with eigenvalue $\tilde{\varepsilon}^2$, such that $f|_{\mathcal{N}(v)\setminus \{u\}}\neq 0$, which contradicts the above implication. 

\item  $\tilde{\varepsilon}^2>\frac14$ and $\deg v=2$.

In this case, 
$\tilde{\Gamma}=P_2$ and $\Gamma=P_4$, which is a contradiction since we are assuming that $\Gamma\neq P_4$.

\item $\tilde{\varepsilon}^2>\frac14$ and $\deg v=3$.

In this case, since we cannot have $\tilde{\Gamma}=P_2$ (as shown in the previous case), $\tilde{\Gamma}$ must have exactly two connected components, each of them is either $P_2$ or a single vertex. But in any of these cases, one can show that $\varepsilon=\varepsilon(\Gamma)<\frac12$, which is a contradiction. 
\end{enumerate}
This proves the claim. \end{proof}

\section*{Funding}
Raffaella Mulas was supported by the Max Planck Society's Minerva Grant.

\section*{Acknowledgments}
The authors are grateful to the anonymous referees for the comments and suggestions.

\bibliography{Gap1}
\end{document}